\newtheorem{theorem}{Theorem}[section]
\newtheorem{lemma}[theorem]{Lemma}
\newtheorem{corollary}[theorem]{Corollary}
\newtheorem{conjecture}[theorem]{Conjecture}
\theoremstyle{remark}
\newtheorem{remark}[theorem]{Remark} 
\theoremstyle{definition}
\xpatchcmd{\proof}{\itshape}{\normalfont\proofnamefont}{}{}
\newcommand{\proofnamefont}{}
\renewcommand{\proofnamefont}{\bfseries}
\newcommand{\C}{\ensuremath{\mathbb{C}}}
\newcommand{\F}{\ensuremath{\mathbb{F}}}
\newcommand{\N}{\ensuremath{\mathbb{N}}}
\newcommand{\R}{\ensuremath{\mathbb{R}}}
\newcommand{\W}{\ensuremath{\mathscr{W}}}
\newcommand{\Ce}{\ensuremath{\mathscr{C}}}
\newcommand{\Le}{\ensuremath{\mathscr{L}}}
\newcommand{\Pe}{\ensuremath{\mathscr{P}}}
\newcommand{\Se}{\ensuremath{\mathscr{S}}}
\newcommand{\one}{\ensuremath{\mathbbm{1}}}
\DeclareMathOperator{\vect}{vec}
\DeclareMathOperator{\E}{E}
\DeclareMathOperator{\V}{V}
\DeclareMathOperator{\m}{m}
\DeclareMathOperator{\rk}{rk}
\DeclareMathOperator{\tr}{tr}
\DeclareMathOperator{\chr}{chr}
\newcommand{\bangle}[1]{\left\langle #1 \right\rangle}
\newcommand{\inprod}[2]{\bangle{#1, #2}}
\title{Equiangular lines via matrix projection}
\author[1]{Igor Balla \fnref{fn1}}
\ead{iballa1990@gmail.com}
\affiliation[1]{
organization={Einstein Institute of Mathematics},
addressline={Hebrew University of Jerusalem},
city={Jerusalem},
country={Israel}
}
\begin{document}
\begin{abstract}
In 1973, Lemmens and Seidel posed the problem of determining the maximum number of equiangular lines in $\R^r$ with angle $\arccos(\alpha)$ and provided a good partial answer in the regime $r \leq 1/\alpha^2 - 2$. At the other extreme where $r$ is at least exponential in $1/\alpha^2$, recent breakthroughs have led to an almost complete resolution of this problem. In this paper, we introduce a new method for obtaining upper bounds which unifies and improves upon previous approaches, thereby yielding bounds which bridge the gap between the aforementioned regimes and are best possible either exactly or up to a factor of two. Our approach relies on orthogonal projection of matrices with respect to the Frobenius inner product and as a byproduct, it yields the first extension of the Alon--Boppana theorem to dense graphs, with equality for strongly regular graphs corresponding to families of $\binom{r+1}{2}$ equiangular lines in $\R^r$. Applications of our method in the complex setting will be discussed as well.
\end{abstract}

\begin{keyword}
equiangular lines, orthogonal projection, Alon--Boppana theorem, spectral graph theory, SIC-POVM
\end{keyword}

\maketitle

\section{Introduction}

Given $n$ lines $l_1, \ldots, l_n$ passing through the origin in $\R^r$, we say that they are \emph{equiangular} if there exists a \emph{common angle} $\theta \in (0, \pi/2]$ such that for all $i \neq j$, the acute angle between $l_i$ and $l_j$ is $\theta$. If we choose a unit vector $v_i$ along each line $l_i$, an equivalent definition is that $|\inprod{v_i}{v_j}| = \cos(\theta)$ for all $i \neq j$, and this latter definition extends to complex lines in $\C^r$. Large sets of equiangular lines have been studied for over 75 years, arising naturally in a wide variety of different areas including elliptic geometry \cite{HS47, B53}, the theory of polytopes \cite{C73}, frame theory \cite{HS03, CKP13, HC17}, the theory of Banach spaces \cite{B19, KLL83, FS17, DL23}, and perhaps more surprisingly, quantum information theory \cite{Z11, RBSC04, FHS17, FS19, CFS02, FS03} with connections to algebraic number theory and Hilbert's twelfth problem \cite{B17, AFMY17, AFMY20, ABGHM22}. Moreover, determining the maximum number of equiangular lines in $\R^r$ is considered to be one of the founding problems of algebraic graph theory \cite{GR01} and it is well known due to the absolute bound (see \cite{LS73}) that the answer cannot be larger than $\binom{r+1}{2}$, which is tight up to a multiplicative constant.

In this paper, we will focus on the more refined question of determining $N^\R_{\alpha}(r)$, the maximum number of equiangular lines in $\R^r$ with common angle $\arccos(\alpha)$. This problem was first posed by Lemmens and Seidel \cite{LS73} in 1973, who gave a good partial answer when $r \leq 1/\alpha^2 - 2$ by proving the relative bound, for which many tight constructions are known \cite{FM16}. On the other hand, prior to this work, our understanding of the complementary regime $r \geq 1/\alpha^2 - 2$ has been much more limited. Recently, several breakthroughs have led to an almost complete solution of this problem when $r$ is very large relative to $1/\alpha$. The first result in this direction was due to Bukh \cite{B16}, which we subsequently improved in a joint work with Dr\"{a}xler, Keevash, and Sudakov \cite{BDKS18}, in particular showing that $N^\R_{\alpha}(r) \leq 2r-2$ when $r$ is at least exponential in $1/\alpha^2$. Jiang and Polyanskii \cite{JP20} obtained further refinements, culminating in the work of Jiang, Tidor, Yao, Zhang, and Zhao \cite{JTYZZ21} who determined $N^\R_{\alpha}(r)$ completely when $r$ is at least doubly exponential in $k / \alpha$, where $k = k\left(\frac{1-\alpha}{2\alpha}\right)$ is the corresponding spectral radius order (see the end of \Cref{intro_notation} for a definition). However, it is not possible to extend these recent results to the remaining regime $1/\alpha^2 - 2 \leq r \leq 2^{\Theta(1/\alpha^2)}$ because they all rely on the framework we introduced in \cite{BDKS18}, which uses Ramsey's theorem from graph theory. Note that the only other bounds known for general $\alpha$ in this regime were also obtained more recently by Yu \cite{Y17} and Glazyrin and Yu \cite{GY18} using semidefinite programming and properties of Gegenbauer polynomials. In particular, Yu showed that $N^\R_\alpha(r) \leq \binom{1/\alpha^2 - 1}{2}$ when $r \leq 3/\alpha^2 - 16$.

In this paper, we overcome the bottleneck of the Ramsey-theoretic approach by introducing a new method via orthogonal projection of matrices with respect to the Frobenius inner product, thereby obtaining a unified framework which allows us to significantly extend or improve almost all of the aforementioned results. We obtain several improved bounds in different regimes. For instance, one of our main results (\Cref{thm_bound1_r}) implies a bound which is simply the maximum of Yu's bound \cite{Y17} and the linear bound of \cite{BDKS18}, i.e.\ we show that
\[
N^\R_\alpha(r) \leq \max\left( \binom{1/\alpha^2 - 1}{2},  2r  \right) \text{ for all } \alpha \text{ and } r.
\] 
Moreover, we show that Yu's bound is tight if and only if the absolute bound is met in $1/\alpha^2 - 2$ dimensions and it is also tight up to a factor of two for infinitely many $\alpha$, while the bound of $2r$ is always tight up to a factor of two. We also determine $N^\R_{\alpha}(r)$ completely whenever $r$ is at least doubly exponential in $k \log(1/\alpha)$, where $k$ is the corresponding spectral radius order. Moreover, we also use our projection method to obtain a refinement of the first Welch bound \cite{W74} from coding theory.

Since the unit vectors which span a family of real equiangular lines have pairwise inner product $\alpha$ or $-\alpha$, this information can be encoded in a corresponding graph. Conversely, given any $d$-regular graph whose adjacency matrix has second largest eigenvalue $\lambda_2$ satisfying $d - \lambda_2 \leq (1-\varepsilon)n/2$ where $\varepsilon > 0$, there exists a family of equiangular lines which corresponds to it. If we assume that $\varepsilon$ is a constant, then roughly speaking, our main results for equiangular lines rely on the following novel lower bound 
\[
\lambda_2 \geq \Omega\left(d^{1/3} \right).
\] 
We derive this bound, as well as the related bound $\lambda_2 \geq \Omega\left( |\lambda_n|^{1/2}\right)$ (see \Cref{regular_bounds} for precise statements), by orthogonally projecting specially chosen matrices onto the span of the linear projection matrices $v_1 v_1^\intercal, \ldots, v_n v_n^\intercal$ corresponding to the given lines. Furthermore, these bounds are tight for strongly regular graphs corresponding to families of equiangular lines meeting the absolute bound in $\R^r$. Prior to this work, such bounds were only known for sufficiently sparse graphs (diameter $\geq 4$) via the Alon--Boppana theorem and so, our results may be seen as the first extension of this celebrated theorem to dense graphs. Furthermore, our results also rely on a bootstrapping argument using other new variants of the Alon--Boppana theorem (see \Cref{lem_beta_lower} and the proof of \Cref{lem_bootstrap}, as well as \Cref{lem_bootstrap_optimal}).

\begin{remark} \label{rem_rst}
After an earlier version of this paper was put on arXiv, Ihringer \cite{I23} obtained another proof of our inequality $\lambda_2 \geq \Omega\left( |\lambda_n|^{1/2}\right)$, interpreting it as a variant of the Krein bound for regular graphs. Furthermore, R\"aty, Sudakov, and Tomon \cite{RST23} (see also \cite{BRST23} for a short exposition) very recently gave alternative proofs of all of our new variants of the Alon--Boppana theorem for regular graphs. In particular, they noted that the bound $\lambda_2 \geq \Omega\left( d^{1/3} \right)$ follows from $\lambda_2 \geq \Omega\left( |\lambda_n|^{1/2}\right)$ and moreover, they pointed out the existence of certain infinite families of strongly regular graphs which imply that both bounds are asymptotically tight when $d = (1-o(1))(2n)^{3/4}$ or tight up to a multiplicative constant when $d = \Theta(n)$.
\end{remark}

We also consider the more general question of determining $N^\C_{\alpha}(r)$, the maximum number of complex equiangular lines in $\C^r$ with common Hermitian angle $\arccos(\alpha )$ (see \Cref{intro_complex}). Similarly to the real case, there is an absolute bound of $r^2$ and the relative bound holds for $r \leq 1/\alpha^2 - 1$, but no other bounds were known in the complementary regime $r \geq 1/\alpha^2 - 1$. Our approach generalizes to this setting, allowing us to obtain the first such bounds. Note that an important conjecture in quantum information theory due to Zauner (\Cref{conj_Zauner}) states that families of $r^2$ equiangular lines in $\C^r$ (SIC-POVMs/SICs) exist for all $d \in \N$ and since we are able to generalize the result of Yu to this setting, this provides a potential alternative route to establishing this conjecture, see concluding remark 4 in \Cref{concluding_remarks}.

\subsection{Structure of the paper}

This paper is organized as follows. In \Cref{intro_real}, \Cref{intro_complex}, and \Cref{intro_regular} we give more detailed introductions to and summaries of our main results on real equiangular lines, complex equiangular lines, and eigenvalues of regular graphs, respectively. \Cref{intro_notation} contains relevant notation and definitions that are used throughout the paper. In \Cref{section_projections}, we discuss how orthogonal projection of matrices can be used to derive the geometric inequalities which underly our results, how this approach relates to the Delsarte linear programming method, and as an additional application, obtain an improvement to the first Welch bound. Then in \Cref{section_r}, \Cref{section_c}, and \Cref{section_graphs} we derive our main results for real equiangular lines, complex equiangular lines, and regular graphs, respectively. Finally, in \Cref{concluding_remarks} we give some concluding remarks and directions for future research.

\subsection{Real equiangular lines} \label{intro_real}

Let $N^{\R}(r)$ be the maximum number of equiangular lines in $\R^r$. It is not hard to show that $N^{\R}(2) = 3$, with the optimal configuration being familiar to anyone who has cut a pizza pie into 6 equal slices using 3 cuts. In 1948, Haantjes \cite{H48} showed that $N^{\R}(3) = N^{\R}(4) = 6$, with an optimal configuration coming from the 6 diagonals of a regular icosahedron. The question of determining $N^\R(r)$ for an arbitrary $r$ was first formally posed in 1966 by van Lint and Seidel \cite{LS66}, and a few years later Gerzon (see \cite{LS73}) proved the \emph{absolute bound} 
\begin{equation*} \label{eq_absolute}
N^{\R}(r) \leq \binom{r+1}{2}.
\end{equation*}
This bound is known to be tight for $r = 2, 3, 7, 23$, with the constructions for $r = 7$ and $r = 23$ being based on the $E_8$ lattice in $\R^8$ and the Leech lattice in $\R^{24}$, see \cite{LS73} and references therein. Surprisingly, we do not know if there are any other $r$ for which this is the case. Moreover, the order of magnitude of $N^{\R}(r)$ was not even known until 2000, when de Caen \cite{C00} gave a construction showing that $N^{\R}(r) \geq \frac{2}{9}(r + 1)^2$ for infinitely many $r$.

In 1973, Lemmens and Seidel \cite{LS73} proposed to study the more refined quantity $N^{\R}_{\alpha}(r)$, the maximum number of equiangular lines in $\R^r$ with common angle $\arccos(\alpha)$, where $\alpha \in [0,1)$. They proved the \emph{relative bound}
\begin{equation*} \label{eq_relative}
    N^\R_{\alpha}(r) \leq \frac{1 - \alpha^2}{1 - \alpha^2 r} r \text{ for all } r < 1/\alpha^2, 
\end{equation*}
which is known to be tight if and only if the unit vectors $v_1, \ldots, v_n$ which span the lines form a \emph{tight frame}, i.e.\ $\sum_{i=1}^n{v_i v_i^\intercal = \frac{n}{r} I}$. Equiangular tight frames correspond to strongly regular graphs with certain parameters and subject to divisibility conditions, they are known to exist for many different values of $r$ and $n$, see the survey of Fickus and Mixon \cite{FM16}. Lemmens and Seidel \cite{LS73} also showed that a construction meeting the absolute bound must have $r = 1/\alpha^2 - 2$, so that the relative bound can be seen as a refinement of the absolute bound when $r \leq 1/\alpha^2 - 2$. They were particularly interested in the case where $1/\alpha$ is an odd integer due to a result of Neumann (see \cite{LS73}), who showed that if this is not the case, then $N^{\R}_{\alpha}(r) \leq 2r$. 

More recently, Bukh \cite{B16} showed that $N^\R_{\alpha}(r)$ is linear in $r$ when $\alpha$ is fixed, as well as conjecturing the asymptotic value of $N^{\R}_{\alpha}(r)$ as $r \rightarrow \infty$ whenever $1/\alpha$ is an odd integer. Together with Dr\"{a}xler, Keevash, and Sudakov \cite{BDKS18}, we significantly improved this result, showing that there exists a positive constant $C$ such that $N^\R_{\alpha}(r) \leq 2r - 2$ for all $r \geq 2^{C / \alpha^2}$, with equality if and only if $\alpha = 1/3$. Some of the ideas we proposed were further clarified and extended by Jiang and Polyanskii \cite{JP20}, who also generalized Bukh's conjecture to any $\alpha$. Using the framework developed in \cite{BDKS18}, together with a new bound on the maximum multiplicity of the second eigenvalue of a connected graph, Jiang, Tidor, Yao, Zhang, and Zhao \cite{JTYZZ21} were able to verify this conjecture in a strong form. To state their result, we first define the \emph{spectral radius order} $k(\lambda)$ to be the least number of vertices in a graph having spectral radius $\lambda$ (we set $k(\lambda) = \infty$ if there is no such graph). If the spectral radius order $k = k\left(\frac{1-\alpha}{2\alpha}\right) < \infty$, it was shown in \cite{JTYZZ21} that there exists a positive constant $C$ such that $N^\R_{\alpha}(r) = \left \lfloor \frac{k}{k-1} (r-1) \right \rfloor \text{ for all } r \geq 2^{2^{C k / \alpha}}$.

All of these recent results apply Ramsey's theorem in order to obtain a bound on the maximum degree of a corresponding graph which is exponential in $1/\alpha$. Therefore, such methods stop working when $r$ is much smaller than $2^{1/\alpha}$, in which case the only known bounds were obtained using semidefinite programming and Gegenbauer polynomials. Yu \cite{Y17} showed that $N^\R_{\alpha}(r) \leq \binom{1/\alpha^2 - 1}{2}$ for $1 / \alpha^2 - 2 \leq r \leq 3/\alpha^2  - 16$ and $\alpha \leq 1/3$, which can be seen as a refinement of the absolute bound when $r \geq 1/\alpha^2 - 2$ and later Glazyrin and Yu \cite{GY18} characterized the case of equality, as well as proving the universal bound $N^\R_{\alpha}(r) \leq O\left( r/\alpha^2 \right)$ for all $\alpha \leq 1/3$. Note that Yu's bound is tight if and only if the absolute bound is met in some dimension, but it is only relevant when $r$ is very close to $1/\alpha^2$. On the other hand, Glazyrin and Yu's universal bound works for all $r$, but it is off by a factor of $1/\alpha^2$ when $r$ is large relative to $1/\alpha$.

In this paper, we introduce a new method based on orthogonal projection of symmetric matrices with respect to the Frobenius inner product in order to unify and improve upon previous approaches to obtaining upper bounds on $N^\R_\alpha(r)$. As a result, we are able to extend our arguments from \cite{BDKS18} to the entire range $r \geq 1/\alpha^2 - 2$, thereby obtaining the following theorem which gives a bound that is the maximum of our linear bound from \cite{BDKS18} and the bound of Yu \cite{Y17}. Moreover, for the regime not covered by the relative bound, i.e.\ for all $r \geq 1/\alpha^2 - 2$, this theorem significantly extends the bound of Yu and is best possible either exactly or up to a small multiplicative constant.

\begin{theorem} \label{thm_bound1_r}
For all $0 < \alpha < 1$ and $r \in \N$, we have $N^\R_\alpha(r) \leq \max\left( \binom{1/\alpha^2 - 1}{2}, 2r  \right)$. Moreover, if $\alpha \leq 1/7$ and we define $r_\alpha = \frac{1}{2} \binom{1/\alpha^2 - 1}{2} + \frac{(1+\alpha)^2}{16 \alpha^2}$, then for all $r \leq r_\alpha$, we have
\[
N^\R_{\alpha}(r) \leq \binom{1/\alpha^2 - 1}{2}
\]
with equality only if the corresponding lines span a $(1/\alpha^2 - 2)$-dimensional subspace. Furthermore, for all $r > r_\alpha$, we have
\[
N^\R_{\alpha}(r) \leq 2r - \frac{(1+\alpha)^2}{8 \alpha^2}.
\]
\end{theorem}

\begin{remark} \label{rem_yu}
As previously mentioned, it is known that if the absolute bound is met in $\R^{r'}$, then the corresponding $\alpha$ must satisfy $r' = 1/\alpha^2 - 2$, in which case \Cref{thm_bound1_r} would imply that $N^\R_\alpha(r) =  \binom{1/\alpha^2 - 1}{2}$ for all $\frac{1}{\alpha^2} - 2 \leq r < r_\alpha = \frac{1 - O(\alpha)}{4 \alpha^4}$. Conversely, if $N^\R_\alpha(r) =  \binom{1/\alpha^2 - 1}{2}$ for some $\frac{1}{\alpha^2} - 2 \leq r < r_\alpha = \frac{1 - O(\alpha)}{4 \alpha^4}$, then  \Cref{thm_bound1_r} implies that $1/\alpha^2$ is an integer and the corresponding collection of lines meets the absolute bound in a $(1/\alpha^2 - 2)$-dimensional subspace. 
\end{remark}

The proof of \Cref{thm_bound1_r} follows via bounds on the largest eigenvalue of a corresponding Gram matrix, see the beginning of \Cref{section_r} for an outline. Moreover, in view of \Cref{rem_yu}, improving this theorem for general $\alpha$ when $\frac{1}{\alpha^2} - 2 \leq r \leq \frac{1}{4 \alpha^4} - \Theta\left(\frac{1}{\alpha^2} \right)$ would require showing that the absolute bound can be improved for all sufficiently large dimensions, which is a difficult open problem\footnote{The question of whether the absolute bound is tight in a given dimension is equivalent to the existence of a strongly regular graph with certain parameters, see Corollary 5.6 in \cite{W09}.}. On the other hand, when $r$ is much larger than $1/\alpha^4$, we obtain the following further improved bounds, in the same way Jiang and Polyanskii \cite{JP20} refined our approach in \cite{BDKS18}.

\begin{theorem} \label{thm_bound3_r}
Let $s \geq 2$ be an integer and let $0 < \alpha < 1$. If $r \gg 1/\alpha^{2s + 1}$, then 
\[
N^\R_{\alpha}(r) \leq \left(1 + \left( \frac{2\alpha}{1-\alpha} \right)^2 + \frac{1 + o(1)}{4 \cos^2 \left(\frac{\pi}{s+2}\right)} \right) r.
\]
In particular, if $r \geq 1/\alpha^{\omega(1)}$, then $N^\R_{\alpha}(r) \leq \left( \frac{5}{4} + \left( \frac{2\alpha}{1-\alpha} \right)^2 + o(1)\right) r$.
\end{theorem}
\noindent Finally, when $r$ is exponential in $1/\alpha$, we obtain the following extensions of the results of Jiang, Tidor, Yao, Zhang, and Zhao \cite{JTYZZ21}.

\begin{theorem} \label{thm_bound4_r}
There exists a constant $C > 0$, such that for all $0 < \alpha < 1$ with corresponding spectral radius order $k = k\left( \frac{1-\alpha}{2\alpha} \right)$, we have
\begin{enumerate}
    \item $N^\R_{\alpha}(r) = \left \lfloor \frac{k}{k-1} (r-1) \right \rfloor$ if $k < \infty$ and $r \geq 2^{1/\alpha^{C (k-1)}}$,
    \item $N^\R_{\alpha}(r) \leq r + \frac{C r \log(1/\alpha)}{\log{\log{r}}}$ if $k < \infty$ and $2^{1/\alpha^C} \leq r < 2^{1/\alpha^{C (k-1)}}$,
    \item $N^\R_{\alpha}(r) \leq r + \frac{C r \log(1/\alpha)}{\log{\log{r}}}$ if $k = \infty$ and $r \geq 2^{1/\alpha^C}$.
\end{enumerate}
\end{theorem}

\noindent Note that for any $\lambda$, the spectral radius order $k(\lambda) \geq \lambda + 1$. Moreover, in the special case that $\alpha = \frac{1}{2t - 1}$ for some integer $t \geq 2$, we have $k\left( \frac{1-\alpha}{2\alpha} \right) = t$ and thus we obtain the following corollary.
\begin{corollary}
There exists a constant $C > 0$ such that for all $r, k \in \N$ with $k \geq 2$ and $r \geq 2^{k^{C k}}$, \[
N^\R_{\frac{1}{2k-1}}(r) = \left \lfloor \frac{k}{k-1} (r-1) \right \rfloor.
\]
\end{corollary}

In order to help the reader interpret the significance of the new upper bounds in various regimes depending on how large $r$ is relative to $1/\alpha$, we provide a high level overview here, assuming that $\alpha \rightarrow 0$ and  that the spectral radius order $k$ satisfies $k = O(1/\alpha)$ (see the concluding remarks for a more detailed table summarizing these results). \Cref{thm_bound1_r} establishes an essentially best possible bound of $\binom{1/\alpha^2 -1}{2}$ when $1/\alpha^2 - 2 \leq r \leq O(1/\alpha^4)$, which was previously only known when $r = \Theta(1/\alpha^2)$. It also yields a linear bound of $2r$ when $r \geq \Omega(1/\alpha^4)$, which is tight up to a factor of 2 and improves on the previously best known bound of $O(r/\alpha^2)$ for $\Omega(1/\alpha^4) \leq r \leq 2^{O(1/\alpha^2)}$. Furthermore, \Cref{thm_bound3_r} yields even further improved bounds of the form $\gamma r$ where $\gamma < 2$, provided that $r$ is at least a polynomial in $1/\alpha$ with large degree. Item 2 of \Cref{thm_bound4_r} implies that the answer to the problem is $(1 + o(1))r$ when $r \geq 2^{1/\alpha^{\Omega(1)}} $, improving the previously best known bound of $2r$ when $2^{1/\alpha^{\Omega(1)}} \leq r \leq 2^{2^{O(1/\alpha^2)}}$. Finally, item 1 of \Cref{thm_bound4_r} extends the range for which the problem is completely solved from $r \geq 2^{2^{\Omega(1/\alpha^{2})}}$ to $r \geq 2^{1/\alpha^{\Omega(1/\alpha)}}$.

We also note that Lemmens and Seidel \cite{LS73} already determined that $N^{\R}_{1/3}(r) = \max\left(28, 2r - 2\right)$ for all $r \geq 7$ and moreover, they also conjectured that $N^{\R}_{1/5}(r) = \max\left(276, \lfloor 3(r-1)/2 \rfloor \right)$ for all $r \geq 23$, which was recently proven by Cao, Koolen, Lin, and Yu \cite{CKLY22}, building off of the work of Neumaier \cite{N89}. In view of these results, we propose the following more general conjecture.

\begin{conjecture} \label{conj_ambitious}
For all $0 < \alpha < 1$ with corresponding spectral radius order $k = k\left( \frac{1-\alpha}{2\alpha} \right) < \infty$,
\[
N^\R_\alpha(r) \leq \max\left( \binom{1/\alpha^2 - 1}{2}, \left \lfloor \frac{k}{k-1}(r-1) \right \rfloor \right).
\]
\end{conjecture}
\noindent Even if this ambitious conjecture is false in general, it would already be very interesting to see if it holds in the case where $1/\alpha$ is an odd integer. Note that \Cref{thm_bound1_r} establishes a weak version of \Cref{conj_ambitious} and moreover, if one could significantly improve Jiang, Tidor, Yao, Zhang, and Zhao's bound \cite{JTYZZ21} on the multiplicity of the second eigenvalue of a connected graph, then \Cref{conj_ambitious} would follow from our arguments.

 
We conclude this subsection with some lower bounds for $N^\R_{\alpha}(r)$, thereby showing that there exists a constant $C$ such that \Cref{thm_bound1_r} is best possible up to a small multiplicative constant when $r \geq C/\alpha^2$. For all $\alpha \in [0,1)$ and $r \in \N$, it is not hard to see that $N^\R_\alpha(r) \geq r$. Indeed, if we rotate each standard basis in $\R^r$ by the same angle towards the all ones vector $\one$, we can obtain a collection\footnote{More precisely, for $i \in [r]$, let $v_i = \frac{1}{r}\left(\sqrt{1-\alpha + \alpha r} - \sqrt{1-\alpha} \right) \mathbbm{1} + \left( \sqrt{1-\alpha} \right) e_i$ in $\R^r$. Then $\{v_1,\ldots, v_r\}$ is a collection of unit vectors in $\R^r$ with pairwise inner product $\alpha$. }  of $r$ unit vectors with all pairwise inner products $\alpha$. Using this fact, we obtain the following immediate corollary of \Cref{thm_bound1_r}.

\begin{corollary} \label{cor_linear_growth}
For all $0 < \alpha < 1$ and $r \geq \Omega(1/\alpha^4)$, we have $N^\R_\alpha(r) = \Theta(r)$.
\end{corollary}

In view of the aforementioned result of Neumann, we can only hope to obtain lower bounds which are superlinear in $r$ when $1/\alpha$ is an odd integer. Moreover, it is known that all constructions of $\Omega(r^2)$ lines in $\R^r$ satisfy $r = \Theta(1/\alpha^2)$. In particular, de Caen's construction \cite{C00} works for all dimensions $r_t = 3 \cdot 2^{2t-1} - 1$ where $t \in \N$. Using the fact that $N^{\R}_{\alpha}(r)$ is a non-decreasing function of $r$, we immediately obtain the following corollary of de Caen's construction and \Cref{thm_bound1_r}.

\begin{corollary} \label{cor_lower_super}
There exists a constant $C > 0$ and an infinite sequence $\{\alpha_t\}_{t \in \N}$ with $\alpha_t \rightarrow 0$ such that for all $\alpha = \alpha_t$ and $\frac{C}{\alpha^2} \leq r \leq O\left( \frac{1}{\alpha^4} \right)$, we have $N^\R_\alpha(r) = \Theta(1/\alpha^4)$.
\end{corollary}

Using \Cref{thm_bound1_r}, together with a family of strongly regular graphs constructed by Metz (see \Cref{rem_Metz}), we also note that there exists a constant $C$ such that for any $\alpha = \frac{1}{2q-1}$ where $q$ is a prime power, $N^\R_\alpha(r) = \Theta\left(1/\alpha^4\right)$ when $C/\alpha^3 \leq r \leq O\left( 1/\alpha^4 \right)$.


\subsection{Complex equiangular lines} \label{intro_complex}

For a pair of 1-dimensional subspaces $U, V \subseteq \C^r$, i.e.\ complex lines through the origin, the quantity $\arccos{|\inprod{u}{v}|}$ is the same for any choice of unit vectors $u \in U, v \in V$, and so we may define this quantity to be the \emph{Hermitian angle} between $U$ and $V$, see e.g.\ \cite{S01}. Given $n$ complex lines $l_1, \ldots, l_n$ passing through the origin in $\C^r$, we say they are \emph{equiangular} if there exists $\theta \in (0, \pi/2]$ such that the Hermitian angle between $l_i$ and $l_j$ is $\theta$ for all $i \neq j$, in which case we call $\theta$ the \emph{common Hermitian angle}. As in the real case, we define $N^\C_{\alpha}(r)$ to be the maximum number of complex equiangular lines in $\C^r$ with common Hermitian angle $\arccos(\alpha)$ and $N^\C(r) = \max_{\alpha \in [0,1)}{N^\C_{\alpha}(r)}$.

The earliest results on complex equiangular lines go back to the 1975 work of Delsarte, Goethals, and Seidel \cite{DGS75}. Analogous to the real case, they proved the absolute bound 
\[
N^\C(r) \leq r^2
\]
and gave matching lower bound constructions in $\C^2$ and $\C^3$. Zauner \cite{Z11} was the first to make the connection between complex equiangular lines and quantum theory, as well as showing that $N^\C(r) = r^2$ for $r \leq 5$. This lead him to conjecture that the same holds for all $r \in \N$, and moreover that extremal constructions can be obtained as the orbit of some vector under the action of a Weyl--Heisenberg group.
\begin{conjecture}[Zauner] \label{conj_Zauner}
$N^\C(r) = r^2$ for all $r \in \N$.
\end{conjecture}

Since the work of Renes, Blume-Kohout, Scott, and Caves \cite{RBSC04}, collections of $r^2$ complex equiangular lines in $\C^r$ have come to be known in quantum information theory as symmetric, informationally complete, positive operator-valued measures, or \emph{SIC-POVM}s/\emph{SIC}s for short. Unlike the real case, Zauner's conjecture has turned out to be true for more than 100 different values of the dimension $r$, including all $r \leq 40$ and as large as $r = 19603$, see \cite{GS17, S21, ABGHM22} and the survey paper of Fuchs, Hoang, and Stacey \cite{FHS17} for more information. In particular, Appleby, Bengtsson, Grassl, Harrison, and McConnell \cite{ABGHM22} provide a recipe which they conjecture produces a SIC in $\C^r$ for any $r \in \N$ such that $r - 3$ is a perfect square. We also note that Sz\"oll\H{o}si \cite{S14} classified all SICs in $\C^3$.
 
SICs turn out to be quite remarkable objects, having applications in quantum state tomography \cite{CFS02} and quantum cryptography \cite{FS03}, as well as being candidates for a ``standard quantum measurement'' in the foundations of quantum mechanics, most notably in QBism \cite{FS19}. Moreover, they also have applications in high-precision radar and speech recognition and it has been suggested that a SIC in $\C^{2048}$ is worth patenting (see the last paragraph of \cite{FHS17}). Finally, it is very intriguing to note that Zauner's conjecture is related to algebraic number theory with a connection to Hilbert's twelfth problem \cite{B17, AFMY17, AFMY20, ABGHM22}.  

Despite all of the research on $N^\C(r)$, very little is known about $N^\C_{\alpha}(r)$. Delsarte, Goethels, and Seidel \cite{DGS75} proved the analogous relative bound 
\[
N^\C_{\alpha}(r) \leq \frac{1 - \alpha^2}{1 - \alpha^2 r} r \text{ for all } r < 1 / \alpha^2,
\]
which is tight if and only if the unit vectors $v_1, \ldots, v_n$ spanning the lines form a tight frame in $\C$, i.e.\ $\sum_{i=1}^n{v_i v_i^*}= \frac{n}{r} I$. To the best of our knowledge, no bound for general $\alpha$ is known for the complementary regime $r \geq 1 / \alpha^2$. Since our projection method generalizes to the complex setting, we are able to obtain such bounds. Our results depends on a new geometric inequality for complex equiangular lines which is tight for any SIC and may therefore be of independent interest in quantum theory, see \Cref{rem_SIC}.

The following theorem is a weaker, but more general version of \Cref{thm_bound1_r}, which generalizes the bound of Yu \cite{Y17} and implies that $N^\C_\alpha(r)$ is linear in $r$ when $\alpha$ is fixed. Just like \Cref{thm_bound1_r}, the proof of \Cref{thm_bound1_c} follows from bounds on the largest eigenvalue of a corresponding Gram matrix. 

\begin{theorem} \label{thm_bound1_c}
Let $0 < \alpha < 1$ and define $r^\C_\alpha =  \frac{(1-\alpha)(1-\alpha^2)}{\alpha^3}  + 3\frac{1+\alpha}{\alpha^2}$. For all $r \leq r^\C_\alpha$, we have 
\[
N^\C_{\alpha}(r) \leq \left (  \frac{1}{\alpha^2} - 1\right)^2
\] 
with equality if and only if the corresponding lines form a SIC in $1/\alpha^2 - 1$ dimensions. Otherwise if $r > r^\C_\alpha$, then
\[
N^\C_{\alpha}(r) \leq \frac{1+\alpha}{\alpha} r   + 3\frac{1+\alpha}{\alpha^3}.
\]
\end{theorem}

\begin{remark} \label{ref_yu_c}
Similarly to the real case, it is well known (see e.g.\ \cite{FM16}) that if there exists a SIC in $\C^r$, then $r = 1/\alpha^2 - 1$, in which case \Cref{thm_bound1_c} implies that for all $\frac{1}{\alpha^2} - 1 \leq r' \leq  r^\C_\alpha = \frac{1-O(\alpha)}{\alpha^3}$, we have $N^\C_\alpha(r') = (1/\alpha^2 - 1)^2$. Conversely, if $N^\C_\alpha(r') = (1/\alpha^2 - 1)^2$ for some $\frac{1}{\alpha^2} - 1 \leq r' \leq  r^\C_\alpha$, then  \Cref{thm_bound1_c} implies that $1/\alpha^2$ is an integer and the corresponding collection of lines forms a SIC in a $(1/\alpha^2 - 1)$-dimensional subspace.
\end{remark}

In view of the fact that the existence of a SIC in $\C^r$ has been verified for many different $r$, we note that \Cref{ref_yu_c} is more important than \Cref{rem_yu}. For instance, it shows that Zauner's conjecture implies the following stronger conjecture.

\begin{conjecture} \label{conj_c}
There exists a constant $C$ such that for all $0 < \alpha < 1$ satisfying $\frac{1}{\alpha^2} \in \N$ and for all $\frac{1}{\alpha^2} - 1 \leq r <  \frac{1 - C \alpha}{\alpha^3}$, we have 
\[
N^\C_\alpha(r) = \left( \frac{1}{\alpha^2} - 1\right)^2.
\]
\end{conjecture}

We also note that Godsil and Roy \cite{GR09} gave a construction of $r^2 - r + 1$ many equiangular lines in $\C^r$ with cosine of the common angle $\alpha = \sqrt{\frac{r-1}{r^2}}$ whenever $r-1$ is a prime power. This allows us to immediately obtain the following asymptotic variant of \Cref{conj_c} as a corollary of \Cref{thm_bound1_c}.

\begin{corollary} \label{cor_godsil_roy}
Let $0 < \alpha < 1$ be such that $\frac{1 + \sqrt{1 - 4\alpha^2}}{2 \alpha^2} - 1$ is a prime power and assume $\alpha \rightarrow 0$. Then for all $\frac{1 + \sqrt{1 - 4\alpha^2}}{2 \alpha^2}  \leq r <  \frac{1 - o(1)}{\alpha^3}$, we have 
\[
N^\C_\alpha(r) = \frac{1-o(1)}{\alpha^4}.
\]
\end{corollary}

\subsection{Eigenvalues of regular graphs} \label{intro_regular}

Let $G$ be a $k$-regular graph whose adjacency matrix $A $ has second eigenvalue $\lambda_2$ and last eigenvalue $\lambda_n$. There is by now a lot of literature on how the size of $\lambda_2$ (and $\lambda_n$) relate to other properties of $G$, see \cite{BH11, CS95, CDS98} for more information. Inspired by the fact that $G$ is connected if and only if the spectral gap $k - \lambda_2 > 0$, Fiedler \cite{F73} studied the spectral gap as an algebraic measure of the connectivity of a graph and showed how it relates to the usual combinatorial notions of vertex and edge connectivity. It is also known that the spectral gap and $\max( \lambda_2, -\lambda_n)$ are related to the mixing time of random walks on a graph and that this generalises to other Markov chains, see e.g.\ \cite{DR93, DS91}. Moreover, Alon and Milman \cite{A86, AM85} showed that the spectral gap as well as $\max(\lambda_2, -\lambda_n)$ are closely related to combinatorial notions of graph expansion, which can also be interpreted as discrete isoperimetric inequalities. In particular, Chung, Graham, and Wilson \cite{CGW89} showed that $\max(\lambda_2, -\lambda_n)$ determines how pseudo-random a graph is. We remark that the theory of graphs having good expansion properties is not only deep and interesting on its own, but also has extensive applications in mathematics and computer science \cite{HLW06, KS06}, as well as applications to other sciences, see e.g.\ \cite{CDS98, T107}.

Given the above, it is interesting to determine how large the spectral gap $k - \lambda_2$ can be, or equivalently, how small $\lambda_2$ can be as a function of $k$. If $G$ has diameter $D$, the well-known Alon--Boppana theorem \cite{A86, N91} gives the lower bound $\lambda_2 \geq 2\sqrt{k-1} - (2 \sqrt{k-1} - 1)/\lfloor D/2 \rfloor$. As $D \rightarrow \infty$, this bound approaches $2\sqrt{k-1}$, which is known to be tight due to the existence of Ramanujan graphs, see e.g.\ \cite{HLW06}. Moreover, Alon conjectured and Friedman \cite{F08} proved that for any $\varepsilon > 0$, ``most'' $k$-regular graphs on sufficiently many vertices will satisfy $\lambda_2 \leq 2\sqrt{k-1} + \varepsilon$. However, the Alon--Boppana theorem doesn't say much when the diameter $D \in \{2, 3\}$ and even though this theorem has been improved and generalized in various ways (see e.g.\ \cite{F93, H05, J19}), it seems that all previously known results require the assumption that the graph is sufficiently sparse. 

In this paper, we make use of the connection between graphs and equiangular lines in order to obtain the first generalization of the Alon--Boppana theorem which holds for dense $k$-regular graphs with no assumption on the diameter. We only require the following assumption on the spectral gap of $G$,
\begin{equation} \label{spectral_gap}
    k - \lambda_2 < \frac{n}{2}.
\end{equation}
Indeed, such an assumption is necessary if we want to have a nontrivial lower bound on $\lambda_2$, since the complete bipartite graph with parts of size $n/2$ has $\lambda_2 = 0$. Moreover, note that any $k$-regular graph $G$ with $k < n/2$ must have $\lambda_2 \geq 0$ (see \Cref{lem_spectral_history}), and so it must satisfy \eqref{spectral_gap}. Additionally, the complement of any $k$-regular graph is $(n-1-k)$-regular, so if $k \geq n/2$ then \eqref{spectral_gap} holds for the complement graph. Finally, the bounds we obtain are tight for strongly regular graphs corresponding to families of equiangular lines meeting the absolute bound. To state the result, we let $\m_G(\lambda_2)$ denote the multiplicity of $\lambda_2$ in $A$.

\begin{theorem} \label{regular_bounds}
Let $k, n \in \N$ and let $G$ be a $k$-regular graph with second eigenvalue $\lambda_2 = \lambda_2(G)$ and last eigenvalue $\lambda_n = \lambda_n(G)$. If the spectral gap satisfies $k - \lambda_2 < n/2$, then we have
\[
2\left( k - \frac{(k-\lambda_2)^2}{n} \right) \leq \frac{ \lambda_2 (\lambda_2 + 1) (2 \lambda_2 + 1)}{1 - \frac{2(k-\lambda_2)}{n}} - \lambda_2(3\lambda_2 + 1),
\]
and
\[
-\lambda_n \leq \frac{ \lambda_2(\lambda_2 + 1) }{1 - \frac{2(k-\lambda_2)}{n} } - \lambda_2,
\]
with equality in both whenever $n = \binom{n - \m_G(\lambda_2) + 1}{2} - 1$, i.e.\ when $G$ is a strongly regular graph corresponding to a family of $\binom{r+1}{2}$ equiangular lines in $\R^r$ for $r = n - \m_G(\lambda_2)$.
\end{theorem}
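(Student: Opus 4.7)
The plan is to reduce the theorem to a geometric inequality for equiangular lines via the standard spectral construction, and then to prove and apply that inequality. Setting $\alpha := 1/(2\lambda_2 + 1)$, I form
\[
M := I + \alpha(J - I - 2A) \in \R^{n \times n}.
\]
Since $A\mathbf{1} = k\mathbf{1}$ and $Aw = \lambda w$ with $w \perp \mathbf{1}$ implies $Jw = 0$, the vector $\mathbf{1}$ is an eigenvector of $M$ with eigenvalue $\mu^{(1)} := \alpha(n - 2(k-\lambda_2))$, which is positive by \eqref{spectral_gap}, while each $w \perp \mathbf{1}$ with $Aw = \lambda w$ is an eigenvector of $M$ with eigenvalue $2\alpha(\lambda_2 - \lambda) \geq 0$, equal to zero exactly when $\lambda = \lambda_2$. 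Hence $M$ is positive semidefinite of rank $r := n - m(\lambda_2)$, and factoring $M = V^\top V$ with $V \in \R^{r \times n}$ produces unit vectors $v_1, \ldots, v_n \in \R^r$ with $\langle v_i, v_j\rangle \in \{\pm \alpha\}$ for $i \neq j$, i.e.\ a system of $n$ equiangular lines with common angle $\arccos(\alpha)$.

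The main step is to prove the geometric inequality $\mu^{(1)} \mu \leq n(1-\alpha^2)/2$ for every eigenvalue $\mu \geq 0$ of $M$ whose eigenspace is orthogonal to $\mathbf{1}$. Fix a unit eigenvector $w \perp \mathbf{1}$ of $M$ with $Mw = \mu w$, set $u := \mathbf{1}/\sqrt{n}$, and consider the symmetric matrix
\[
S := (Vu)(Vw)^\top + (Vw)(Vu)^\top.
\]
Using $\langle Vu, Vw\rangle = u^\top M w = 0$, a direct computation gives $\|S\|_F^2 = 2\mu^{(1)}\mu$. Writing $Y_i := v_i v_i^\top$, the identities $\langle v_i, Vu\rangle = (Mu)_i = \mu^{(1)}/\sqrt{n}$ and $\langle v_i, Vw\rangle = \mu w_i$ yield
\[
\beta_i := \tr(Y_i S) = 2\langle v_i, Vu\rangle \langle v_i, Vw\rangle = \frac{2\mu^{(1)}\mu}{\sqrt{n}}\, w_i, \qquad \sum_i \beta_i^2 = \frac{4(\mu^{(1)})^2 \mu^2}{n},
\]
and $\sum_i \beta_i = 0$ since $w \perp \mathbf{1}$. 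The Gram matrix of $\{Y_i\}$ under the Frobenius inner product is $H := (1-\alpha^2) I + \alpha^2 J$, and the orthogonal projection $S_0$ of $S$ onto $\Span\{Y_i\}$ has coordinate vector $c \in \R^n$ solving $Hc = \beta$. Since $\mathbf{1}$ is an eigenvector of $H$ and $\beta \perp \mathbf{1}$, we get $c \perp \mathbf{1}$ and hence $Hc = (1-\alpha^2)\,c$, so $c = \beta/(1-\alpha^2)$ and
\[
\|S_0\|_F^2 = c^\top H c = c^\top \beta = \frac{\|\beta\|^2}{1-\alpha^2} \leq \|S\|_F^2.
\]
Substituting the computed values of $\|\beta\|^2$ and $\|S\|_F^2$ gives the claimed inequality.

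Applying this with $\mu = 2\alpha(\lambda_2 - \lambda_n)$ (the eigenvalue of $M$ attached to $\lambda_n$-eigenvectors of $A$) and substituting $\alpha = 1/(2\lambda_2+1)$ yields the second bound of the theorem. For the first bound, each of the $n-1$ eigenvalues $\mu_i$ of $M$ whose eigenvector is orthogonal to $\mathbf{1}$ lies in $[0, C]$ with $C := n(1-\alpha^2)/(2\mu^{(1)})$, so $\mu_i^2 \leq C\mu_i$; summing and using $\tr(M) = n$ together with $\tr(M^2) = n(1+(n-1)\alpha^2)$ gives
\[
n\bigl(1+(n-1)\alpha^2\bigr) - (\mu^{(1)})^2 \leq C\bigl(n - \mu^{(1)}\bigr),
\]
which rearranges to the first inequality after the same substitutions. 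Equality requires both $S \in \Span\{Y_i\}$ and each non-$\mathbf{1}$ eigenvalue of $M$ lying in $\{0, C\}$; when $G$ is strongly regular with $n + 1 = \binom{r+1}{2}$ the nonzero eigenvalues of $M$ are $\mu^{(1)}$ and $\mu^{(n)} = C$, and the $v_i$ extend to an absolute-bound system of $n+1$ lines for which $S$ automatically lies in $\Span\{Y_i\}$, yielding equality in both bounds. The main obstacle is the second paragraph: identifying $S$ as the right auxiliary symmetric matrix and exploiting $\beta \perp \mathbf{1}$ so that the smaller eigenvalue $1-\alpha^2$ of $H$ appears in place of the larger eigenvalue $1 + (n-1)\alpha^2$.
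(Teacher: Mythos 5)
Your construction of $M$, its spectral analysis, and the projection argument giving $\mu^{(1)}\mu \leq n(1-\alpha^2)/2$ for eigenvalues of $M$ with eigenvectors orthogonal to $\one$ are correct; this part coincides with the paper's route to the second bound (it is \Cref{lem_eigensimple}, i.e.\ \Cref{main_r} applied with $y=\one/\sqrt{n}$). Your derivation of the first bound, however, is genuinely different from the paper's: the paper proves a sharpened projection inequality (\Cref{main_r_regular}), obtained by adjoining to the span of the matrices $v_iv_i^{\intercal}$ the extra direction coming from $X_{\one}$, and applies it at $x=e_1$; you instead combine the pointwise bound $\mu_i\leq C$ with the trace identities $\tr(M)=n$ and $\tr(M^2)=n(1+(n-1)\alpha^2)$, which yields exactly the first inequality of \Cref{lem_regular_bounds_equi} and hence, after the same substitutions, the first bound of the theorem. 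This is a valid and arguably more elementary route to the two inequalities. (Two small omissions: you should check that the hypothesis $k-\lambda_2<n/2$ forces $\lambda_2>0$, so that $\alpha\in(0,1)$ — the paper rules out complete and regular complete multipartite graphs and handles disconnected and $k=0$ cases — and you silently assume the $\lambda_n$-eigenvector is orthogonal to $\one$, which needs $\lambda_n<k$.)

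The genuine gap is in the equality claim, which is part of the statement. In your setup, equality in the second bound requires $S\in\Span\{Y_1,\ldots,Y_n\}$, and equality in the first bound additionally requires every non-$\one$ eigenvalue of $M$ to lie in $\{0,C\}$; for a strongly regular $G$ the latter reduces again to the second-bound equality $2\alpha(\lambda_2-\lambda_n)=C$, so everything hinges on the membership of $S$ in $\Span\{Y_i\}$. Your justification — that the $v_i$ extend to an absolute-bound system of $n+1$ lines, for which $S$ ``automatically'' lies in the span — does not work: the $n+1$ projectors of an absolute-bound system span all of $\Se_r$, so membership of $S$ in \emph{that} span is vacuous, whereas you need membership in the $n$-dimensional span of $Y_1,\ldots,Y_n$ alone, i.e.\ orthogonality of $S$ to the one-dimensional complement of this span inside $\Se_r$ (and the extension claim itself is not justified). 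This orthogonality is precisely the computation at the heart of the paper's \Cref{main_r_regular}: the complement is spanned by $X_{\one}-\Pe X_{\one}$, and one computes $\inprod{X_{\one}-\Pe X_{\one}}{X_x}_F=\left(\lambda^2 n-\frac{\lambda^4}{\alpha^2 n+1-\alpha^2}\right)\inprod{x}{\one}$, which vanishes exactly because your eigenvector is orthogonal to $\one$; note also that the paper obtains equality purely from the dimension count $n=\binom{r+1}{2}-1$, with no appeal to strong regularity or to an extension of the line system. Without this (or an equivalent) argument, the equality case of the theorem is not established in your proposal.
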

\begin{remark} \label{rem_regular}
$k - \lambda_2 < n/2$ implies that $k + \lambda_2 < 2 \left( k - \frac{(k-\lambda_2)^2}{n } \right)$, so that the first inequality of \Cref{regular_bounds} gives an upper bound on the degree $k$. 
\end{remark}

Note that families of equiangular lines meeting the absolute bound $\binom{r+1}{2}$ in $\R^r$ are known to exist for $r = 2,3, 7, 23$, and the corresponding graphs are the edge, the pentagon, the complement of the Schl\"afli graph, and the McLaughlin graph, so that the bounds of \Cref{regular_bounds} are tight for such graphs. If we further assume that the spectral gap is bounded away from $n/2$, we immediately obtain the following corollary.

\begin{corollary} \label{cor_alon_boppana}
Let $k, n \in \N$ and let $G$ be a $k$-regular graph with second eigenvalue $\lambda_2 = \lambda_2(G)$ and last eigenvalue $\lambda_n = \lambda_n(G)$. If the spectral gap satisfies $k - \lambda_2 \leq (1-\varepsilon) \frac{n}{2}$ for some constant $\varepsilon > 0$, then we have
\[
\lambda_2 \geq \Omega\left( k^{1/3} \right)  \qquad \text{and} \qquad \lambda_2 \geq \Omega\left(  \sqrt{-\lambda_n} \right).
\]
In particular, if $G$ is bipartite then $\lambda_2 \geq \Omega(\sqrt{k}).$ Moreover, if the spectral gap $k - \lambda_2 = o(n)$ then
\[
\lambda_2 \geq (1-o(1)) k^{1/3}  \qquad \text{and} \qquad \lambda_2 \geq (1-o(1))  \sqrt{-\lambda_n},
\]
so that if $G$ is bipartite, we have $\lambda_2 \geq (1 - o(1)) \sqrt{k}$.
\end{corollary}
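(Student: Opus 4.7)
My plan is to derive both bounds as essentially immediate consequences of \Cref{regular_bounds}, since the hypothesis on the spectral gap directly controls the denominators appearing there. First I would observe that the assumption $k - \lambda_2 \leq (1-\varepsilon) n/2$ gives $1 - 2(k-\lambda_2)/n \geq \varepsilon$, so the denominators in both inequalities of \Cref{regular_bounds} are bounded below by the positive constant $\varepsilon$.

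Applying the second inequality of \Cref{regular_bounds} with this lower bound yields $-\lambda_n \leq \lambda_2(\lambda_2+1)/\varepsilon - \lambda_2 = O(\lambda_2^2)$, so $\lambda_2 \geq \Omega(\sqrt{-\lambda_n})$. To obtain $\lambda_2 \geq \Omega(k^{1/3})$ I would estimate the two sides of the first inequality separately. The spectral gap bound gives $(k-\lambda_2)^2/n \leq (1-\varepsilon)(k-\lambda_2)/2$, and combined with $\lambda_2 \geq 0$ (which, as noted after \Cref{regular_bounds}, follows from $k - \lambda_2 < n/2$ by splitting on whether $k < n/2$ or $k \geq n/2$), the LHS is at least $2k - (1-\varepsilon)(k-\lambda_2) \geq (1+\varepsilon)k$. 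The RHS is at most $\lambda_2(\lambda_2+1)(2\lambda_2+1)/\varepsilon - \lambda_2(3\lambda_2+1) = O(\lambda_2^3)$, and rearranging produces $\lambda_2 \geq \Omega(k^{1/3})$. The bipartite case is then immediate, since a bipartite $k$-regular graph satisfies $\lambda_n = -k$, whence $\sqrt{-\lambda_n} = \sqrt{k}$.

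For the stronger asymptotic version with spectral gap $k - \lambda_2 = o(n)$, I would repeat the same two estimates while tracking lower-order terms. The denominator $1 - 2(k - \lambda_2)/n$ becomes $1 - o(1)$, and the extra term $2(k - \lambda_2)^2/n$ subtracted on the LHS of the first inequality is $(k-\lambda_2)\cdot o(1) = o(k)$, so that side equals $(2 - o(1)) k$. Assuming $k \to \infty$ (otherwise the bound is trivial), the non-asymptotic lower bound just established forces $\lambda_2 \to \infty$ as well, so each of $\lambda_2(\lambda_2+1)(2\lambda_2+1)$ and $\lambda_2(\lambda_2+1)$ is dominated by its leading term, giving $(2-o(1))k \leq (2+o(1))\lambda_2^3$ and $-\lambda_n \leq (1+o(1))\lambda_2^2$. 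Rearranging yields $\lambda_2 \geq (1-o(1)) k^{1/3}$ and $\lambda_2 \geq (1-o(1))\sqrt{-\lambda_n}$, and the bipartite statement again follows from $\lambda_n = -k$. I do not anticipate any substantive obstacle; the entire argument is routine algebraic manipulation once \Cref{regular_bounds} is in hand, the only mildly fiddly point being the case split needed to ensure $\lambda_2 \geq 0$ so that the LHS estimate in the first inequality is valid.
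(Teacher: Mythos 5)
Your proposal is correct and follows essentially the same route as the paper: bound the denominators $1 - 2(k-\lambda_2)/n$ below by $\varepsilon$ (respectively $1-o(1)$), apply both inequalities of \Cref{regular_bounds}, and absorb lower-order terms, with the bipartite case reduced to $\lambda_n = -k$. Your explicit lower bound $(1+\varepsilon)k$ on the left-hand side via $\lambda_2 \geq 0$ plays the same role as the paper's remark following \Cref{regular_bounds}, and your observation that $\lambda_2 \to \infty$ when $k \to \infty$ in the asymptotic case is, if anything, slightly more careful than the paper's formal $o(1)$ manipulation.
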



As mentioned in \Cref{rem_rst}, R\"aty, Sudakov, and Tomon \cite{RST23} noted that R. Metz (see \cite{BL84}, Section 7.A) constructed an infinite family of strongly regular graphs satisfying $k = (1-o(1))(2n)^{3/4}$, $\lambda_2 = (1-o(1)) k^{1/3}$, and $-\lambda_n = \lambda_2^2$ and moreover, that for a fixed odd prime $p$, Davis, Huczynska, Johnson, and Polhill \cite{DHJP24} recently constructed an infinite family of strongly regular graphs satisfying $k = (1-o(1))n/p$, $\lambda_2 = (1-o(1)) (p k)^{1/3}$, and $-\lambda_n = (1-o(1)) \lambda_2^2 / p$. It follows that \Cref{cor_alon_boppana} is tight when $k = (1-o(1))(2n)^{3/4}$ or $k = \Theta(n)$. 

\begin{remark} \label{rem_Metz}
Note that for any prime power $q$, if we let $\alpha = 1/(2q - 1)$, then the aformentioned construction of Metz leads to a collection of $\Theta(1/ \alpha^4)$ equiangular lines with common angle $\arccos(\alpha)$ in $\R^r$ with $r = \Theta(1/\alpha^3)$, matching the upper bound of \Cref{thm_bound1_r}.
\end{remark}


\subsection{Notation and definitions} \label{intro_notation}

Let $\N, \R, \C$ denote the set of positive integers, field of real numbers, and field of complex numbers, respectively. For all $n \in \N$, we define $[n] = \{1, \ldots, n\}$. We let $S^1(\alpha) = \{z \in \C : |z| = \alpha \}$ denote the circle of radius $\alpha$ centered at $0$ in the complex plane. For any quantities $f,g$ we write $f \leq O(g)$ or $g \geq \Omega(f)$, if $f,g$ are implicitly assumed to be functions of an increasing parameter $t \in \N$ and there exist $C > 0, t_0 \in \N$ such that $f(t) \leq C g(t)$ for all $t \geq t_0$. Moreover, we write $f = \Theta(g)$ if $f \leq O(g)$ and $f \geq \Omega(g)$. Also, we write $f = o(g)$, $g = \omega(f)$ or $f \ll g$ whenever $f(t)/g(t) \rightarrow 0$ as $t \rightarrow \infty$. Finally, when doing calculations, we will often make use of the inequality $\sqrt{1+x} \leq 1 + x/2$ and its natural consequence $\sqrt{x^2 + y} \leq x + \frac{y}{2x}$.

\subsubsection*{Linear algebra}

Let $\F$ be a field and let $S$ be a finite set. We let $\F^{S}$ denote the vector space of functions from $S$ to $\F$, we let $\one$ denote the \emph{all ones vector} (i.e.\ the constant function with value $1$), and for $i \in S$, we let $e_i$ denote the \emph{standard basis vector} in $\F^{S}$ given by $(e_i)(j) = 1$ if $i = j$ and $(e_i)(j) = 0$ otherwise. More generally, given another finite set $T$, we shall consider functions in $\F^{S \times T}$ as matrices indexed by $S \times T$ and we also do no distinguish between matrices in $\F^{S \times T}$ and the corresponding linear maps from $\F^T$ to $\F^S$. 
For a matrix $M \in \F^{S \times S}$, we let $\rk(M)$ and $\tr(M)$ denote its rank and trace, respectively and for a pair of matrices $M, N \in \F^{S \times S}$, we let $M \odot N$ denote their \emph{Hadamard product}, given by $(M \odot N)(i,j) = M(i,j) N(i,j)$. 

Given $M \in \C^{S \times T}$, we let $M^{\intercal}$ and $M^*$ denote the \emph{transpose matrix} and \emph{adjoint matrix} in $\C^{T \times S}$. We also let $\Se_S = \{M \in \R^{S \times S} : M^{\intercal} = M\}$ denote the space of all real symmetric matrices indexed by $S \times S$ and let $\inprod{u}{v} = u^* v$ denote the \emph{standard inner product} of $u, v \in \C^S$. For any Hermitian matrix $M \in \C^{S \times S}$, we let $\lambda_i = \lambda_i(M) \in \R$ for $i \in S$ denote its eigenvalues ordered so that $\lambda_1 \geq \ldots \geq \lambda_n$ and we define $\m_M(\lambda) = |\{j: \lambda_j = \lambda\}|$ to be the \emph{multiplicity} of $\lambda$.  Finally, letting $u_1, \ldots, u_n$ be an orthonormal basis of corresponding eigenvectors such that $M = \sum_{i=1}^{n}{\lambda_i u_i u_i^*}$, we define the \emph{Moore--Penrose inverse} of $M$ to be $M^{\dag} = \sum_{i : \lambda_i \neq 0}{ \frac{1}{\lambda_i} u_i u_i^* }$. 

For convenience, we let $\F^n = \F^{[n]}$, $\F^{n \times S} = \F^{[n] \times S}$, and $\Se_n = \Se_{[n]}$.

\subsubsection*{Inner product spaces}

Let $\F \in \{\R, \C\}$ and let $V$ be a vector space over $\F$. A map $\inprod{\cdot}{\cdot}_{\text{P}} \colon V \times V \rightarrow \F$ is called an \emph{inner product} if it satisfies
\begin{enumerate} 
    \item $\inprod{u}{\cdot}_{\text{P}} \colon V \rightarrow \F$ is linear for all $u \in V$,
    \item $\overline{\inprod{u}{v}_{\text{P}} } = \inprod{v}{u}_{\text{P}}$ for all $u,v \in V$,
    \item $\inprod{u}{u}_{\text{P}} \ = 0$ implies $u = 0$ for all $u \in V$,
\end{enumerate}
and we say that $V$ is an \emph{inner product space} with respect to $\inprod{\cdot}{\cdot}_{\text{P}}$, letting $||\cdot||_{\text{P}} \colon V \rightarrow \F$ denote the corresponding \emph{norm}. For any $u \in V$, we let $u^{\#} \colon V \rightarrow \F$ denote the \emph{adjoint} of $u$ defined by $u^{\#}v = \inprod{u}{v}_{\text{P}}$ for all $v \in V$. More generally, if $U$ and $V$ are finite-dimensional vector spaces with inner products $\inprod{\cdot}{\cdot}_{\text{P}}$ and $\inprod{\cdot}{\cdot}_{\text{Q}}$, respectively and $L \colon U \rightarrow V$ is a linear map, then we let $L^{\#} \colon V \rightarrow U$ denote the \emph{adjoint map}\footnote{If we choose bases for $U$ and $V$, then we can represent the linear map $L$ by a corresponding matrix $[L]$ with respect to those bases, in which case the adjoint map $L^{\#}$ is represented by the corresponding adjoint matrix $[L]^*$.}, which satisfies $\inprod{L^{\#}v}{u}_{\text{P}} = \inprod{v}{Lu}_{\text{Q}}$ for all $u \in U, v \in V$.

Let $V$ be an inner product space with respect to $\inprod{\cdot}{\cdot}_{\text{P}}$ and let $\Ce \subseteq V$ be a finite set of vectors. We define $M_{\Ce} \in \F^{\Ce \times \Ce}$ to be their \emph{Gram matrix}, defined by $(M_{\Ce})(u,v) = \inprod{u}{v}_{\text{P}}$. Moreover, for any subset $S \subseteq \F$, we say that a collection of unit vectors $\Ce$ is a \emph{spherical $S$-code} if $\inprod{u}{v}_{\text{P}} \in S$ for all $u \neq v \in \Ce$. Finally, given a finite set $S$, we let $\inprod{\cdot}{\cdot}_F \colon \F^{S \times S} \times \F^{S \times S} \rightarrow \F$ denote the \emph{Frobenius inner product}, which is defined by $\inprod{A}{B}_F = \tr\left(A^* B \right)$ for all $A,B \in \F^{S \times S}$.

\subsubsection*{Graph theory}
A \emph{graph} $G$ consists of a finite set $\V(G)$ whose elements are called \emph{vertices} and a collection of unordered pairs of vertices $\E(G) \subseteq \{\{u,v\} : u, v \in \V(G), u \neq v \}$, which are called \emph{edges}. For convenience, whenever $u$ and $v$ are vertices of a graph $G$, we write $uv$ instead of $\{u, v\}$ in order to represent the unordered pair. We let $G^c$ denote the complement of $G$ and we write $H \subseteq G$ when $H$ is a subgraph of $G$. For any vertex $v \in V(G)$, we let $N_G(v) = \{u \in V(G) : uv \in E(G) \}$ denote its \emph{neighborhood} and let $d_G(v) = |N_G(v)|$ denote its \emph{degree}. We also define $\overline{d_G} = \frac{1}{|V(G)|}\sum_{v \in V(G)}{d_G(v)}$ to be the \emph{average degree} of $G$ and $\Delta_G = \max_{v \in V(G)}{d_G(v)}$ to be the \emph{maximum degree} of $G$. A graph $G$ is called $d$-\emph{regular} if $d_G(v) = d$ for all $v \in V(G)$. We let $K_n$ denote the complete graph on $n$ vertices and for $r \geq 2$, let $K_{n_1, \ldots, n_r}$ denote the complete multipartite graph with $r$ parts having sizes $n_1, \ldots, n_r$. 

For any graph $G$, we let $A_G \in \R^{V(G) \times V(G)}$ denote the corresponding \emph{adjacency matrix}, defined by $A_G(u,v) = 1$ if $uv \in E(G)$ and $A_G(u,v) = 0$ otherwise. Since $A_G$ is symmetric, it has a full set of real eigenvalues $\lambda_1(G) \geq \ldots \geq \lambda_n(G)$, and we equivalently say that $\lambda_i = \lambda_i(G)$ is the $i$th eigenvalue of $G$ with multiplicity $\m_G(\lambda_i) = \m_{A_G}(\lambda_i)$. In particular, $\lambda_1$ is called the \emph{spectral radius} of $G$ and we say that $\lambda_1 - \lambda_2$ is the \emph{spectral gap} of $G$. Finally, the \emph{spectral radius order} $k(\lambda)$ is defined to be the least number of vertices in a graph having spectral radius $\lambda$, where we set $k(\lambda) = \infty$ if no such graph exists.

\section{Orthogonal projection of matrices} \label{section_projections}

In this section, we discuss how orthogonal projection of matrices with respect to the Frobenius inner product can be used to obtain new inequalities for positive semidefinite matrices (equivalently Gram matrices of vectors) which underlie our main results. We also apply this method to obtain a strengthening of the first Welch bound in coding theory, in particular giving another proof of the relative bound. Finally, we discuss how our approach relates to the Delsarte linear programming method.

\subsection*{Real equiangular lines}

The usual way one studies real equiangular lines in $\R^r$ is by choosing a unit vector along each line, thereby obtaining a collection of vectors $\Ce$ with all pairwise inner products lying in the set $\{\alpha, -\alpha\}$ where $\alpha$ is the cosine of the common angle, i.e.\ $\Ce$ is a spherical $\{\alpha, -\alpha \}$-code in $\R^r$. In our previous work together with Dr\"{a}xler, Keevash, and Sudakov \cite{BDKS18}, the arguments rested upon finding a sufficiently large subset of $\Ce$ with the property that all pairwise inner products are $\alpha$, i.e.\ a large regular simplex, and then projecting remaining vectors onto the orthogonal complement of the span of this simplex. However, instead of looking for such a simplex in $\Ce$, in this paper we use the fact that the projection matrices $\{ v  v^{\intercal} : v \in \Ce \}$ already form a large regular simplex in $\Se_r$, the space of $r \times r$ real symmetric matrices with respect to the Frobenius inner product. This result is well known and indeed, when combined with the fact that the dimension of $\Se_r$ is $\binom{r+1}{2}$, it yields a proof of the absolute bound. 

Our main contribution is the observation that one can obtain new geometric inequalities by orthogonally projecting a matrix onto the span of $\{ v  v^{\intercal} : v \in \Ce \}$ and using the fact that the Frobenius norm can only decrease. A benefit of this approach is that if the absolute bound is met, i.e.\ if $n = \binom{r+1}{2}$, then $\{ v  v^{\intercal} : v \in \Ce \}$ span the whole space $\Se_r$, and so our inequalities become tight. 

So what kind of matrices should we project onto $\{ v  v^{\intercal} : v \in \Ce \}$? Since an orthogonal projection of an asymmetric matrix onto a space of symmetric matrices necessarily reduces its Frobenius norm and since we are interested in obtaining inequalities that have the possibility of being tight, we restrict ourselves to projecting symmetric matrices. Perhaps the simplest matrix to try is the identity and indeed, this leads to a proof of the relative bound (see \Cref{rem_relative}). It is also natural to try to construct the matrix as a tensor product of some specially chosen vectors. In particular, in this paper we observe that projecting $\left( \sum_{v \in \Ce}{x(v) v} \right) u^\intercal + u \left(  \sum_{v \in \Ce}{x(v) v} \right)^\intercal$, where $u \in \Ce$ and $x$ is a unit eigenvector corresponding to eigenvalue $\lambda$ of the Gram matrix $M_\Ce$, leads to the following inequality involving $\alpha, n, x(u)$, and $\lambda$, which is tight if $n = \binom{r+1}{2}$. 


\begin{theorem} \label{main_r}
Let $0 < \alpha < 1$ and let $\Ce$ be a spherical $\{ \alpha, -\alpha\}$-code in $\R^r$ with Gram matrix $M = M_{\Ce}$ and $n = |\Ce|$.  If $\lambda$ is an eigenvalue of $M$ with corresponding unit eigenvector $x$, then
\[
\lambda \left(\frac{\lambda^2}{\alpha^2 n + 1 - \alpha^2} - \frac{1-\alpha^2}{2\alpha^2} \right) x(u)^2 \geq \lambda - \frac{1-\alpha^2}{2\alpha^2}
\]
for all $u \in \Ce$, with equality if $n = \binom{r+1}{2}$.
\end{theorem}

For the proof of \Cref{main_r}, recall that given a linear map $\Le \colon \R^{\Ce} \rightarrow \Se_r$ where $\Se_r$ is equipped with the Frobenius inner product and $\R^\Ce$ is equipped with the standard inner product, the adjoint $\Le^{\#} \colon \Se_r \rightarrow \R^n$ satisfies $\inprod{\Le^{\#}M}{v} = \inprod{M}{\Le v}_F$ and for a single matrix $M \in \Se_r$, the adjoint $M^{\#} \colon \Se_r \rightarrow \R$ satisfies $M^{\#}N = \inprod{M}{N}_F$. We will also make use of the well-known fact that $\inprod{x y^\intercal}{z w^\intercal}_F = \inprod{x}{z} \inprod{y}{w}$ for any $x,y,z,w \in \R^r$.

\begin{proof}[Proof of \Cref{main_r}]
We define $\W \colon \R^\Ce \rightarrow  \Se_r$ to be the linear map given by $\W e_v = v v^\intercal$ for all $v \in \Ce$ and consider the adjoint map $\W^\# \colon \Se_r \rightarrow \R^\Ce$ with respect to the Frobenius inner product on $\Se_r$ and the standard inner product on $\R^\Ce$. Note that if we let $\Ce = \{v_1, \ldots, v_n\}$, then since any $r \times r$ matrix $M$ can be viewed as a vector $\vect(M)$ with $r^2$ coordinates, one can equivalently think of $\W$ in the following proof as an $r^2 \times n$ matrix $W$ with $i$th column given by $\vect(v_i v_i^\intercal)$. In this case, the Frobenius inner product $\inprod{M}{N}_F$ of two $r \times r$ matrices $M, N$ simply becomes the standard inner product $\inprod{\vect(M)}{\vect(N)}$ in $\R^{r^2}$ and the adjoint map $\W^{\#}$ becomes the $n \times r^2$ transpose matrix $W^\intercal$.  Also, $V$ is then just a matrix with $i$th column given by $v_i$. As such, the reader may, if they so choose, read the following argument using these substitutions with no loss of generality.

Observe that $\W^\# \W \colon \R^\Ce \rightarrow \R^\Ce$ is a matrix satisfying
\[ 
(\W^\# \W)(u,v)
= \inprod{u u^\intercal}{v v^\intercal}_F 
= \inprod{u}{v}^2 
= 
\begin{cases}
\alpha^2 &\text{ if } u \neq v \\
1 &\text{ if } u = v
\end{cases}
\]
for all $u, v \in \Ce$, i.e.\ we have $\W^{\#} \W = (1-\alpha^2) I + \alpha^2 J$ which is invertible with $(\W^{\#} \W)^{-1} = \frac{1}{1-\alpha^2} \left( I - \frac{\alpha^2}{\alpha^2 n + 1 - \alpha^2} J \right)$. It follows that $\Pe = \W (\W^{\#} \W)^{-1} \W^{\#} \colon \Se_r \rightarrow \Se_r$ is the orthogonal projection onto the range of $\W$. Indeed, one may verify that $\Pe^2 = \Pe = \Pe^{\#}$ and $\Pe \W = \W$. 

Now let $u \in \Ce$, let $V \in \R^{r \times \Ce}$ be the matrix satisfying $V y = \sum_{v \in \Ce}{y(v) v}$ for all $y \in \R^\Ce$, and let $X = \left( Vx\right) u^\intercal + u \left( V x \right)^\intercal$. Since $V^\intercal V = M$, we have
\begin{align*}
||X||_F^2 
= \inprod{(Vx) u^{\intercal} + u (Vx)^{\intercal}}{(Vx) u^{\intercal} + u (Vx)^{\intercal}}_F
&= 2 \inprod{Vx}{Vx} \inprod{u}{u} + 2 \inprod{Vx}{u}^2\\
&= 2 \left( x^\intercal M x + (x^\intercal M e_u)^2 \right)\\
&= 2 \left( \lambda + \lambda^2 x(u)^2 \right).
\end{align*}
Moreover, for all $v \in \Ce$ we have
\begin{align*}
(\W^\# X)(v) = \inprod{v v^{\intercal}}{(Vx) u^{\intercal} + u (Vx)^\intercal}_F = 2\inprod{v}{Vx}\inprod{v}{u} 
= 2 (e_v^\intercal M x) \inprod{v}{u}
= 2 \lambda x(v) M(v,u),
\end{align*}
so that $\inprod{\one}{\W^\# X} = 2 \lambda \sum_{v \in \Ce}{ x(v) M(v, u)} = 2 \lambda x^\intercal M e_u = 2 \lambda^2 x(u)$ and 
\begin{align*}
||\W^\# X||^2 = 4 \lambda^2 \sum_{v \in \Ce}{x(v)^2 \inprod{v}{u}^2} 
&= 4 \lambda^2 \left( \alpha^2 \sum_{v \in \Ce}{x(v)^2} + (1-\alpha^2)x(u)^2 \right) \\
&= 4 \lambda^2 \left( \alpha^2 + (1-\alpha^2) x(u)^2  \right).
\end{align*}
Since $\Pe$ is an orthogonal projection, we conclude that
\begin{align*}
2 \left( \lambda + \lambda^2 x(u)^2 \right)
= ||X||^2_F 
\geq ||\Pe X||_F^2
&= X^{\#} \W ( \W^{\#} \W)^{-1} \W^{\#} X\\
&= \frac{1}{1-\alpha^2} (\W^\# X)^{\intercal} \left( I - \frac{\alpha^2}{\alpha^2 n + 1 - \alpha^2} J \right) (\W^\# X)\\
&= \frac{1}{1-\alpha^2}  \left( ||\W^\# X||^2 - \frac{\alpha^2}{\alpha^2 n + 1 - \alpha^2} \inprod{\one}{\W^\# X}^2 \right)\\
&= \frac{1}{1-\alpha^2}  \left(  4 \lambda^2 \left( \alpha^2 + (1-\alpha^2) x(u)^2  \right) - \frac{4 \alpha^2 \lambda^4 x(u)^2}{\alpha^2 n + 1 - \alpha^2}  \right),
\end{align*}
which is equivalent to the desired inequality.

Finally, since $\W^\# \W$ is invertible we have that $n = \rk(\W^{\#} \W) = \rk(\W)$. Also note that the space of symmetric $r \times r$ matrices $\Se_r$ has dimension $\binom{r+1}{2}$. Thus if $n = \binom{r+1}{2}$, then the range of $\W$ is $\Se_r$ and so $\Pe$ is the identity map, giving equality above.
\end{proof}

\begin{remark} \label{rem_schur}
We note that the preceding theorem can also be obtained via an application of the Schur product theorem. Indeed, the projection inequality in the above is equivalent to $||X - \sum_{v \in \Ce}{c(v) v v^\intercal}||^2_F \geq 0$ where $X = (Vx) u^\intercal + u(Vx)^\intercal$ and the coefficients $\{ c(v) : v \in \Ce \}$ are given by the projection of $X$ onto the span of $\{ v v^\intercal: v \in \Ce \}$. Since $X = (Vx + u)(Vx + u)^\intercal - (Vx)(Vx)^\intercal - u u^\intercal$, this inequality can be written in the form $\left|\left|\sum_{v \in \Ce'}{c(v) v v^\intercal} \right|\right|^2_F \geq 0$ where $\Ce' = \Ce \cup \{ Vx, u, Vx + u\}$ and $c(Vx) = c(u) = -1, c(Vx + u) = 1$. Now it suffices to observe that $\left|\left|\sum_{v \in \Ce'}{ c(v) v v^\intercal }\right|\right|^2_F = \sum_{u,v \in \Ce'}{c(u) c(v) \inprod{u}{v}^2} = c^\intercal (M \odot M) c$ where $M = M_{\Ce'}$ is the Gram matrix of $\Ce'$ and $\odot$ is the Hadamard product. Therefore, the desired inequality is equivalent to $c^\intercal (M \odot M) c \geq 0$, which follows from the Schur product theorem applied to $M$.
\end{remark}

\subsection*{Complex equiangular lines}

Another benefit of our approach is that it generalizes to the setting of complex equiangular lines, in which case we have a collection of unit vectors $\Ce$  in $\C^r$ satisfying $| \inprod{u}{v} | = \alpha$ for all $u \neq v \in \Ce$, i.e.\ a spherical $S^1(\alpha)$-code where $S^1(\alpha)$ is the circle of radius $\alpha$ centered at $0$ in the complex plane. As in the real case, it is well known that the Hermitian matrices $\{ v v^{*} : v \in \Ce \}$ form a regular simplex with respect to the Frobenius inner product, so that we have the analogous absolute bound $|\Ce| \leq r^2$. However, unlike in the real case, a complex linear combination of Hermitian matrices need not be Hermitian. Indeed, even for a single Hermitian matrix $H$ we have $(iH)^* = -i H \neq i H$. Therefore, $\{ v v^{*} : v \in \Ce \}$ is capable of spanning the entire space $\C^{r \times r}$ of $r \times r$ complex matrices and so we do not need to restrict ourselves to projecting Hermitian matrices. As such, if we let $u \in \Ce$ and let $x$ be a unit eigenvector of the corresponding Gram matrix $M_\Ce$, then orthogonally projecting $\left( \sum_{v \in \Ce}{x(v) v} \right) u^*$ onto the span of $\{ v v^{*} : v \in \Ce \}$ yields the following result which is analogous to \Cref{main_r}.

\begin{theorem} \label{main_c}
Let $0 < \alpha < 1$ and let $\Ce$ be a spherical $S^1(\alpha)$-code in $\C^r$ with Gram matrix $M = M_{\Ce}$ and $n = |\Ce|$. If $\lambda$ is an eigenvalue of $M$ with corresponding unit eigenvector $x$, then
\[
\lambda \left(\frac{\lambda^2}{ \alpha^2 n + 1 - \alpha^2} - \frac{1-\alpha^2}{\alpha^2} \right) |x(u)|^2 \geq \lambda - \frac{1-\alpha^2}{\alpha^2} 
\]
for all $u \in \Ce$, with equality if $n = r^2$.
\end{theorem}

The proof of \Cref{main_c} is almost identical to (and arguably simpler than) that of \Cref{main_r}, so we omit some of the details. For the reader who is interested in filling in these details, it will be helpful to note that $V^* V = M$ and $\inprod{x y^*}{z w^*}_F = \inprod{x}{z} \overline{\inprod{y}{w}}$ for all $x,y,z,w \in \C^r$.

\begin{proof}[Proof of \Cref{main_c}]
Define the linear map $\W \colon \C^\Ce \rightarrow \C^{r \times r}$ by $\W e_v = v v^*$, so that we have $(\W^\# \W)^{-1} = \frac{1}{1-\alpha^2}\left(I - \frac{\alpha^2}{\alpha^2 n + 1- \alpha^2} J \right)$ and let $\Pe = \W (\W^\# \W)^{-1} \W^\#$ be the corresponding orthogonal projection map. Also let $u \in \Ce$, let $V \in \C^{r \times \Ce}$ be the matrix satisfying $V y = \sum_{v \in \Ce}{y(v) v}$ for all $y \in \C^\Ce$, and let $X = (Vx) u^*$. Then we have
\begin{align*} 
\lambda = ||X||_F^2
\geq || \Pe X||_F^2
&= X^\# \W (\W^\# \W)^{-1} \W^\# X \\
&= \frac{1}{1-\alpha^2}(\W^\# X)^\intercal \left( I - \frac{\alpha^2}{\alpha^2 n + 1 - \alpha^2} J \right) (\W^\# X)\\
&= \frac{1}{1-\alpha^2} \left( ||\W^\# X||^2_F - \frac{\alpha^2}{\alpha^2 n + 1 - \alpha^2} \inprod{\one}{\W^\# X}^2 \right)\\
&= \frac{1}{1-\alpha^2} \left( \lambda^2 \left(\alpha^2 + (1 - \alpha^2)|x(u)|^2 \right) - \frac{\alpha^2}{\alpha^2 n + 1 - \alpha^2} \lambda^4 |x(u)|^2 \right),
\end{align*}
which is equivalent to the desired bound. Moreover, if $n = r^2$ then $ \{ v v^* : v \in \Ce \}$ span $\C^{r \times r}$ and so $\Pe$ is the identity map, giving equality above.
\end{proof}


\begin{remark} \label{rem_SIC}
Recalling that $\odot$ denotes the Hadamard product, we note that projecting $\left( V x \right) \left( V y \right)^*$ onto the span of $\{ v v^* : v \in \Ce \}$ yields the inequality
\[
(1-\alpha^2) \inprod{x}{Mx} \inprod{y}{My} + \frac{\alpha^2}{\alpha^2 n + 1 - \alpha^2} |\inprod{Mx}{My}|^2 \geq \inprod{Mx \odot \overline{Mx}}{My \odot \overline{My}},
\]
which is tight if $n = r^2$. Moreover, the case of equality in the above could be of particular interest because sets of $r^2$ equiangular lines in $\C^r$ are important objects in quantum information theory known as SICs, and \Cref{conj_Zauner} implies that they exist for every $r$, see \Cref{intro_complex} for more information. In particular, it is well known that a set of $r^2$ equiangular lines in $\C^r$ with common angle $\arccos(\alpha)$ must satisfy $\alpha = 1/\sqrt{r+1}$ (see e.g.\ \cite{DGS75}). Therefore, for all vectors $x, y \in \R^\Ce$, the Gram matrix $M$ of a SIC satisfies
\[
r \inprod{x}{Mx} \inprod{y}{My} + \frac{1}{r} |\inprod{Mx}{My}|^2 = (r+1)\inprod{Mx \odot \overline{Mx}}{My \odot \overline{My}}.
\]
\end{remark}

\subsection*{A refinement of the first Welch bound}

It is well known that the relative bound is actually a special case of the first Welch bound, an inequality in coding theory \cite{W74} with applications in signal analysis and quantum information theory, see \cite{EF02, LL14, MM93}. In a more general form, this bound states that for any collection $\Ce$ of $n$ unit vectors in $\C^r$, we have 
\[
\sum_{u, v \in \Ce}{|\inprod{u}{v}|^2} \geq \frac{n^2}{r}.
\]
Moreover, the first Welch bound follows from the inequality $\left|\left| I - \sum_{v \in \Ce}{c(v) v v^{*}} \right|\right|_F^2 \geq 0$ where $c(v) = 1/n$ for all $v \in \Ce$, and so choosing coefficients $\{ c(v) : v \in \Ce \}$ such that $|| I - \sum_{v \in \Ce}{c(v) v v^{*}} ||_F^2$ is minimized would clearly strengthen the bound. Since choosing such coefficients is precisely equivalent to orthogonally projecting the identity $I$ onto the span of $v_1 v_1^*, \ldots, v_n v_n^*$, our method allows us to obtain a refinement of the first Welch bound. 

To state our result, recall that for any Hermitian matrix $M$ with eigenvalues $\lambda_1, \ldots, \lambda_n$ and corresponding unit eigenvectors $u_1, \ldots, u_n$, the Moore--Penrose generalized inverse of $M$ is $M^{\dag} = \sum_{i: \lambda_i > 0}{\frac{1}{\lambda_i} u_i u_i^{\intercal}}$. Also note that $\odot$ denotes the Hadamard product so that the entries of $M \odot \overline{M}$ are given by $\left(M \odot \overline{M}\right)(i,j) = |M(i,j)|^2$.

\begin{theorem} \label{thm_welch}
Let $n \in \N$ and let $\Ce$ be a collection of $n$ unit vectors in $\C^r$ with corresponding Gram matrix $M = M_{\Ce}$. Then 
\[ 
\one^{\intercal} \left(M \odot \overline{M}\right)^{\dag} \one \leq r,
\]
with equality if and only if the $r \times r$ identity matrix is in the span of $\{ v v^* : v \in \Ce \}$. Moreover
\[
\sum_{u,v \in \Ce}{|\inprod{u}{v}|^2}
\geq \frac{n^2}{\one^{\intercal} \left(M \odot \overline{M}\right)^{\dag} \one},
\]
and when $M \odot \overline{M}$ is invertible, we have equality if and only if $\one$ is an eigenvalue of $M \odot \overline{M}$.
\end{theorem}

\begin{proof}
Define the linear map $\W \colon \C^\Ce \rightarrow \C^{r \times r}$ by $W e_v = v v^*$ and consider the adjoint map $\W^\# \colon \C^{r \times r} \rightarrow \C^\Ce$. We have that
\[
(\W^\# \W)(u,v) = \inprod{u}{v} \overline{\inprod{u}{v}} = |\inprod{u}{v}|^2
\]
for all $u, v \in \Ce$, so that $\W^\# \W = M \odot \overline{M}$. Observe that the $r \times r$ identity matrix $I$ satisfies $\W^{\#} I = \one$. Moreover, it is well known (see e.g.\ \cite{BG03}) that $\Pe = \W (\W^{\#} \W)^{\dag} \W^{\#} \colon \C^{r \times r} \rightarrow \C^{r \times r}$ is the orthogonal projection onto the range of $\W$, so that we obtain our first inequality
\[ 
r = ||I||^2_F \geq ||\Pe I||_F^2 = \inprod{I}{\Pe I}_F = I^{\#} \W ( \W^{\#} \W)^{\dag} \W^{\#} I
= \one^{\intercal}  \left(M \odot \overline{M}\right)^{\dag} \one.
\]
Moreover, because $\Pe$ is a projection, we have equality above if and only if $\Pe I = I$, which occurs if and only if $I$ lies in the span of $\{ v v^* : v \in \Ce \}$.

Now we let $\gamma \in \R$ and use the fact that $\Pe I$ is the unique minimizer of the expression $||I - X||_F^2$ over all $X$ in the range of $\W$, in order to conclude that
\begin{align*}
r - \one^{\intercal} \left(M \odot \overline{M}\right)^\dag \one
= \left|\left|I - \Pe I\right|\right|_F^2
\leq  \left|\left|I - \gamma \W \one \right|\right|_F^2
&= I^{\#}I - 2 \gamma I^{\#} \W \one + \gamma^2 \one^{\intercal} \W^{\#} \W \one\\
&= r - 2 \gamma \one^{\intercal} \one + \gamma^2 \one^{\intercal} \left(M \odot \overline{M}\right) \one\\
&= r - 2 n \gamma + \gamma^2 \sum_{u,v \in \Ce}{|\inprod{u}{v}|^2}.
\end{align*}
It is easy to verify that the last expression is minimized at $\gamma \coloneqq n / \sum_{u,v}{|\inprod{u}{v}|^2}$ and thus,
\[
r - \one^{\intercal} \left(M \odot \overline{M}\right)^\dag \one
\leq r - n^2 / \sum_{u,v}{|\inprod{u}{v}|^2},
\]
which is equivalent to the second desired inequality. 

Furthermore, let us assume that $M \odot \overline{M}$ is invertible, so that we have $\Pe I = \W (\W^{\#} \W)^{\dag} \W^{\#} I = \W \left(M \odot \overline{M}\right)^{-1} \one$. By the uniqueness of $\Pe I$, the previous inequality is tight if and only if $\Pe I = \gamma \W \one$. Thus if $\Pe I = \gamma  \W \one$, we conclude
\[
\gamma \left(M \odot \overline{M}\right) \one = \gamma \W^{\#} \W \one 
= \W^{\#} \Pe I
= \W^{\#} \W \left(\W^\# \W\right)^{-1} \W^\# I
= \one,
\] 
so that $\one$ is an eigenvector of $M \odot \overline{M}$. On the other hand, if $\one$ is an eigenvector of $M \odot \overline{M}$, then the corresponding eigenvalue $\lambda$ satisfies 
\[
\lambda = \frac{1}{n} \one^\intercal \left(M \odot \overline{M}\right) \one  = \frac{1}{\gamma}
\]
and therefore, $\left(M \odot \overline{M}\right)^{-1} \one = \gamma \one$. Applying $\W$ now yields
\[
\Pe I =  \W \left(\W^\# \W \right)^{-1} \W^\# I = \W \left(M \odot \overline{M}\right)^{-1} \one = \gamma \W \one,
\]
as desired.
\end{proof}

\begin{remark} \label{rem_relative}
Note that if $\Ce$ is a spherical $\{\alpha, -\alpha\}$-code, then we have already observed that $\left(M \odot \overline{M}\right)^{-1} = \frac{1}{1-\alpha^2} \left( I - \frac{\alpha^2}{\alpha^2 n + 1 - \alpha^2 } J\right) $ and so the inequality $\one^\intercal \left(M \odot \overline{M}\right)^{\dag} \one \leq r$ in the above is equivalent to the relative bound. 
\end{remark}

\begin{remark}
If one is interested in efficiently computing the term $\one^\intercal \left(M \odot \overline{M}\right)^{\dag} \one$ in \Cref{thm_welch} when $M \odot \overline{M}$ is invertible, then this can be done without having to determine $\left(M \odot \overline{M}\right)^{\dag} = \left(M \odot \overline{M}\right)^{-1}$. Indeed, it suffices to find a solution $x \in \R^n$ to $\left(M \odot \overline{M}\right) x = \one$, since then $\one^\intercal x = \one^\intercal \left(M \odot \overline{M}\right)^{-1} \one$.
\end{remark}

\subsection*{Connection to Delsarte's linear programming method}

We conclude this section by discussing the connection between our projection based method and the classical linear programming approach of Delsarte, Goethels, and Seidel \cite{DGS75, DGS77} via Gegenbauer polynomials. 
Given a function $f \colon \R \rightarrow \R$ and a matrix $M \in \R^{n \times n}$, we let $f(M) \in \R^{n \times n}$ denote the matrix resulting from applying $f$ to each entry of $M$, i.e.\ $f(M)(i,j) = f(M(i,j))$. Observe that if we let $\Ce = \{v_1, \ldots, v_n\}$ be a spherical $\{\alpha, - \alpha\}$-code in $\R^r$, then with respect to the Frobenius inner product, projecting each $v_i v_i^{\intercal}$ onto the orthogonal complement of the identity $I$ results in a collection of matrices $\Ce' = \{ v_1 v_1^{\intercal} - \frac{1}{r}I, \ldots, v_n v_n^{\intercal} - \frac{1}{r}I \}$ which has the Gram matrix $M_{\Ce'} = G^r_2(M_{\Ce})$, where $G^r_2(x) = x^2 - \frac{1}{r}$ is a scaled version of the second Gegenbauer polynomial of the $(r-1)$-sphere. 

While the Gegenbauer polynomials and their properties are usually derived using orthogonal spaces of harmonic homogeneous polynomials, see e.g.\ \cite{DGS77}, one may equivalently obtain them via an appropriate generalization of the previous argument to symmetric tensors, see Ehrentraut and Muschik \cite{EM98} for more information. Indeed, a symmetric matrix which is orthogonal to the identity can be viewed as a traceless symmetric 2-tensor and in general, there is an isomorphism between the inner product spaces of traceless symmetric $k$-tensors and harmonic homogeneous polynomials of degree $k$. 
While this approach to spherical harmonics via symmetric tensors does not seem to be common in the mathematics literature, it is more well known in the physics literature, see e.g.\ \cite{P70, H87, EM98, ZZ03, CHS04}.

In addition to being generalizations of the Chebyshev and Legendre polynomials, the significance of Gegenbauer polynomials goes back to a classical result of Schoenberg \cite{S42}, who proved that a function $f \colon [-1,1] \rightarrow \R$ has the property that $f(M_{\Ce})$ is positive semidefinite for any finite set of unit vectors $\Ce$ in $\R^r$ if and only if $f$ is a nonnegative linear combination of Gegenbauer polynomials. Indeed, the inequality $\one^{\intercal} f(M_{\Ce}) \one \geq 0$ with $f$ chosen appropriately\footnote{If $f$ is a nonnegative linear combination of Gegenbauer polynomials such that $f(\inprod{v_i}{v_j}) \leq -1$ for all $i \neq j$, then $0 \leq \one^\intercal f(M_\Ce) \one \leq n f(1) - n(n-1)$, so that $n \leq f(1) + 1$. Therefore, to get the best bound on $n$, we must choose $f$ subject to the given constraints so as to minimize $f(1)$. This minimization can be done using linear programming.} underlies the linear programming approach of Delsarte, Goethels, and Seidel, which has the benefit of only requiring an upper bound on the off-diagonal entries of $f(M_\Ce)$ in order to obtain an upper bound on $n$. Our approach could therefore be seen as a more refined use of the Frobenius inner product geometry of (symmetric) matrices/tensors in order to obtain sharp bounds on the largest eigenvalue of $M_{\Ce}$ which then yield sharp upper bounds on $n$. Although one could obtain the same results as we do by working with polynomials with respect to a certain inner product, we believe that using symmetric tensors is both simpler and more suggestive. The drawback of our approach seems to be that it require some control over the (generalized) inverse of $f(M_\Ce)$ in order to be effective.

\section{Equiangular lines in \texorpdfstring{$\R^r$}{R\^{}r}} \label{section_r}

In this section we will prove our main theorems regarding real equiangular lines. Our methods build on some of the ideas appearing in \cite{BDKS18, JP20, JTYZZ21}. We will first give an outline for proving \Cref{thm_bound1_r}. To this end, we begin by establishing some basic definitions and observations.

Note that the case $\alpha = 0$ is trivial since it corresponds to orthogonal vectors, so we can assume that $\alpha > 0$. For any family of equiangular lines $\Le$, we say that a spherical $\{\alpha, -\alpha\}$-code $\Ce$ \emph{represents} $\Le$ if there exists a bijection $\ell \colon \Ce \rightarrow \Le$ such that $v \in \ell(v)$ for all $v \in \Ce$. As noted in previous sections, choosing a unit vector along each line  $l \in \Le$ always yields a spherical $\{\alpha,-\alpha\}$-code which represents $\Le$, where $\alpha$ is the cosine of the common angle. Therefore, instead of working with a family of lines directly, we will always consider some spherical $\{\alpha, -\alpha\}$-code $\Ce$ which represents it. 

Our first lemma shows that we can assume without loss of generality that an eigenvector corresponding to the largest eigenvalue of the corresponding Gram matrix $M_\Ce$ has no negative coordinates. It is proved by negating a suitable subset of vectors. Indeed, this operation is known in the literature as \emph{switching}, and it is useful for us because it negates coordinates of eigenvectors of $M_{\Ce}$ without affecting eigenvalues.

\begin{lemma} \label{lem_nonnegative_eigenvector}
For any family $\Le$ of equiangular lines, there exists a spherical $\{\alpha,-\alpha\}$-code $\Ce$ which represents $\Le$ such that an eigenvector corresponding to the largest eigenvalue of the Gram matrix $M_\Ce$ has no negative coordinates.
\end{lemma}
\begin{proof}
Let $\Ce$ be a spherical $\{\alpha, -\alpha\}$-code which represents $\Le$ and let $x$ be an eigenvector of $M_\Ce$ corresponding to $ \lambda_1(M_\Ce)$. Let $S = \{u : x(u) < 0 \}$ denote the negative coordinates of $x$ and let $D$ be the $\Ce \times \Ce$ diagonal matrix given by $D(u,u) = -1$ if $u \in S$ and $D(u,u) = 1$ otherwise, so that $Dx$ has no negative coordinates. Now let $\Ce' = (\Ce \backslash S) \cup \{ -u: u \in S \}$ be the spherical $\{\alpha, -\alpha\}$-code obtained from $\Ce$ by negating the vectors corresponding to $S$, and note that it clearly also represents $\Le$. Since its Gram matrix satisfies $M_{\Ce'} = D M_{\Ce} D$, it is easy to check that $Dx$ is an eigenvector of $M_{\Ce'}$ corresponding to $\lambda_1(M_{\Ce}) = \lambda_1(M_{\Ce'})$.
\end{proof}

As previously discussed, we can associate a corresponding graph $G_{\Ce}$ to any spherical $\{\alpha, -\alpha\}$-code $\Ce$ with an edge for each pair of vectors having a negative inner product. More precisely, we define $G = G_{\Ce}$ to have vertex set $V(G) = \Ce$ and edge set $E(G) = \{uv : u,v \in \Ce \text{ and } \inprod{u}{v} = -\alpha \}$. Moreover, letting $A = A_G$ be the adjacency matrix of $G$ and $M = M_{\Ce}$ be the Gram matrix of $\Ce$, we will make repeated use of the fact that
\begin{equation} \label{eq_gram_adjacency}
M = (1-\alpha)I + \alpha J - 2 \alpha A.
\end{equation}

Before giving the outline of our approach for proving \Cref{thm_bound1_r}, we also observe that the upper bound of $\binom{1/\alpha^2 - 1}{2}$ on the size of a spherical $\{\alpha, -\alpha\}$-code follows immediately from the assumption that the largest eigenvalue of its Gram matrix is at most $\frac{1-\alpha^2}{2\alpha^2}$, and we also characterize when equality occurs. 

\begin{lemma} \label{lem_lambda_small}
Let $0 < \alpha < 1$ and let $\Ce$ be a spherical $\{\alpha, -\alpha\}$-code. If the largest eigenvalue $\lambda_1$ of the Gram matrix $M = M_\Ce$ satisfies $\lambda_1 \leq \frac{1-\alpha^2}{2\alpha^2}$, then
\[
|\Ce| \leq \binom{1/\alpha^2 - 1}{2}
\]
with equality only if the span of $\Ce$ has dimension $1/\alpha^2 - 2$.
\end{lemma}
\begin{proof}
Let $n = |\Ce|$ and $r = \rk(M)$. First observe that
\begin{equation} \label{eq_trace_square}
\sum_{i=1}^{r}{\lambda_i(M)^2}  =   \tr(M^2) = (\alpha^2 n + 1 - \alpha^2) n.
\end{equation}
Since $\lambda_1 \leq \frac{1-\alpha^2}{2\alpha^2}$, the desired bound immediately follows from the fact that 
\begin{equation} \label{eq_lambda_small}
\sum_{i=1}^{r}{\lambda_i(M)^2}  \leq \lambda_1 \tr(M) = \lambda_1 n.
\end{equation}
Moreover, observe that equality occurs only if $\lambda_i(M) = \frac{1-\alpha^2}{2\alpha^2}$ for all $1 \leq i \leq r$. Note also that $r$ is the dimension of the span of $\Ce$ and by changing bases, we may assume that $\Ce \subset \R^{r}$, so that if we let $V \in \R^{r \times \Ce}$ be the matrix given by $Vy = \sum_{v \in \Ce}{y(v) v}$, then $M = V^\intercal V$ has the same nonzero eigenvalues as the $r \times r$ matrix $V V^\intercal = \sum_{v \in \Ce}{v v^\intercal}$. Therefore, $n = \binom{1/\alpha^2 - 1}{2}$ implies that $ \sum_{v \in \Ce}{v v^\intercal} = \frac{1-\alpha^2}{2\alpha^2} I$ where $I$ is the $r \times r$ identity matrix and by taking the trace, we conclude that $\binom{1/\alpha^2 - 1}{2} = n = \frac{1-\alpha^2}{2\alpha^2} r$ and so $r = 1/\alpha^2 - 2$.
\end{proof}

\begin{proof}[Outline of the proof of \Cref{thm_bound1_r}] If $\alpha > 1/7$, then $N_\alpha^\R(r) \leq \max \left( \binom{1/\alpha^2 -1}{2}, 2r  \right )$ follows from the results of Neumann and Lemmens and Seidel \cite{LS73}, and Cao, Koolen, Lin, and Yu \cite{CKLY22}. Now we may assume that $\alpha \leq 1/7$ and let $\Ce$ be a spherical $\{\alpha, -\alpha\}$-code in $\R^r$ with $n = |\Ce| = N^\R_\alpha(r)$ as given by \Cref{lem_nonnegative_eigenvector}, so that the corresponding Gram matrix $M = M_\Ce$ has largest eigenvalue $\lambda_1$ with a corresponding unit eigenvector $x$ having no negative coordinates. Note that $2r - \frac{(1+\alpha)^2}{8 \alpha^2}$ is larger than $ \binom{1/\alpha^2 - 1}{2} $ precisely when $r > r_\alpha = \frac{1}{2} \binom{1/\alpha^2 - 1}{2} + \frac{(1+\alpha)^2}{16 \alpha^2}$ and so our goal will be to bound $n$ from above by $\max\left( \binom{1/\alpha^2 - 1}{2}, 2r - \frac{(1+\alpha)^2}{8 \alpha^2}\right)$. As such, we may also assume without loss of generality that $n \geq \binom{1/\alpha^2 - 1}{2}$. Finally, if $\lambda_1 \leq \frac{1-\alpha^2}{2\alpha^2}$ then we obtain the first desired upper bound of $\binom{1/\alpha^2 - 1}{2}$ using \Cref{lem_lambda_small}. 

Otherwise, we will have $\lambda_1 > \frac{1-\alpha^2}{2\alpha^2}$ in which case \Cref{main_r} will imply that $\lambda_1$ is actually much larger (nearly $\alpha n$). More precisely, we will show that $x(u) > \alpha \sqrt{n} / \left(\lambda_1 + \frac{1-\alpha^2}{4\alpha^2} \right)$ for any $u \in \Ce$, so that together with \eqref{eq_gram_adjacency}, we will conclude\footnote{Intuitively, our argument works because it is implicitly showing that $x$ must be close to the normalized all ones vector $\frac{1}{\sqrt{n}} \one$.} that the maximum degree $\Delta$ of the corresponding graph satisfies $\Delta < \frac{1 + 3 \alpha^2 }{8 \alpha^3}$. Note that the corresponding graph will have second eigenvalue $\frac{1-\alpha}{2\alpha}$ and so this can be seen as an Alon-Boppana-type theorem. We will then derive other new variants of the Alon--Boppana theorem which we will use in a bootstrapping argument in order to obtain the improved bound $\Delta \leq \left( \frac{1-\alpha}{2\alpha} \right)^2$. To accomplish this, we will first use \Cref{lem_beta_lower} to prove \Cref{lem_bootstrap}, which will get us $99\%$ of the way towards this bound by establishing that $\Delta < \left(\frac{1}{2\alpha}\right)^2$. More precisely, we will first use the fact that $n \geq \binom{1/\alpha^2 - 1}{2}$ to conclude that $\Delta < n/20$.  We will then apply \Cref{lem_beta_lower} with $t = \left \lceil  \frac{n}{4 \Delta} \right \rceil^2$ to obtain $\Delta < \left(\frac{n}{4} \right)^{2/3}$. Next, we will apply \Cref{lem_beta_lower} again with $t = \Delta$ to conclude that $\Delta < \left(\frac{3(1-\alpha)}{4\alpha} \right)^2$ and then apply \Cref{lem_beta_lower} one final time with $t = \Delta$ to obtain the $99\%$ bound $\Delta < \left(\frac{1}{2\alpha}\right)^2$. To get us all the way towards the desired bound, we first note that if $n < r + \frac{1}{16\alpha^4}  + \frac{1}{4\alpha^2} + 1$, then we are done and otherwise, the second eigenvalue $\frac{1-\alpha}{2\alpha}$ in the corresponding graph has multiplicity at least $\frac{1}{16\alpha^4}  + \frac{1}{4\alpha^2}$. In \Cref{lem_bootstrap_optimal}, we then use \Cref{lem_optimal_multiplicity} to conclude that if $\Delta  > \left( \frac{1-\alpha}{2\alpha} \right)^2$, then this multiplicity is at most $\Delta(\Delta + 1)$, leading to a contradiction and allowing us to finally establish the desired sharp Alon--Boppana-type bound $\Delta \leq \left( \frac{1-\alpha}{2\alpha} \right)^2$.

Now via \eqref{eq_gram_adjacency}, it will follow that $\lambda_1 \geq \frac{\one^\intercal M \one}{n} \geq 1-\alpha +  \alpha n - 2 \alpha \Delta$, so that using our sharp bound on $\Delta$, we may conclude that $\lambda_1$ will indeed be almost as large as $\alpha n$, which is equivalent to 
the matrix $L = M - \lambda_1 x x^\intercal$ having a small Frobenius norm. On the other hand, $L$ has a large trace (roughly $n$) and so, we will conclude that $L$ has large rank (roughly $n/2$). Intuitively, this will follow because the conditions of $L$ having a small Frobenius norm and a large trace are equivalent to $L$ being close in Frobenius distance to the identity matrix, which has full rank. Since the rank of $L$ is at most $r-1$, we will thus obtain the second desired bound $n \leq 2r - \frac{(1+\alpha)^2}{8 \alpha^2}$, completing the argument.
\end{proof}

\begin{remark}
The proof of \Cref{thm_bound3_r} will be almost identical to that of \Cref{thm_bound1_r}, except that we will apply the bootstrapping argument with a stronger version of the Alon-Boppana theorem, as done in \cite{JP20}. Furthermore, \Cref{thm_bound4_r} will follow by combining our new bound on the maximum degree with the second eigenvalue multiplicity argument of \cite{JTYZZ21}.
\end{remark}

In order to implement the arguments in the outline given above, we now state some more useful definitions and results. 
The following lemma shows that a matrix with small Frobenius norm and large trace must have high rank. It is a quantified form of the idea that if a matrix is close to the identity matrix, then its rank must also be close to that of the identity matrix. Note that this result, whose proof is a simple application of Cauchy--Schwarz, 
was an important ingredient of our arguments in \cite{BDKS18} and has many other applications, see e.g.\ the survey of Alon \cite{A09}. 

\begin{lemma} \label{lem_schnirelman}
For any Hermitian matrix $L$, we have
\[ \tr(L)^2 \leq \tr(L^2) \rk(L). \]
\end{lemma}
\begin{proof}
Let $r = \rk(L)$ and note that $\tr(L) = \sum_{i=1}^{r}{\lambda_i(L)}$ and $\tr(L^2) = \sum_{i=1}^{r}{\lambda_i(L)^2}$. The result now follows via Cauchy--Schwarz.
\end{proof}
\begin{remark} \label{rem_identity}
In keeping with the theme of this paper, we note that one can alternatively prove \Cref{lem_schnirelman} using the Frobenius inner product, under the condition that $L$ is positive semidefinite (which will indeed be the case in our applications). Indeed, if $L$ is $n \times n$ with rank $r$, then there exists an $r \times n$ matrix $V$ such that $L = V^* V$ and so if we let $I$ be the $r \times r$ identity matrix, then applying Cauchy--Schwarz with the Frobenius inner product yields
\[
\tr(L)^2 = 
\tr(V V^*)^2 = \inprod{V V^*}{I}_F^2 \leq || V V^* ||_F^2 || I ||_F^2
= \tr(V V^*V V^*) \tr(I)
= \tr(L^2) r.
\]
\end{remark}

Note that \Cref{lem_schnirelman} can be used to obtain an upper bound on the size of a spherical $\{\alpha, -\alpha\}$-code $\Ce$ in $\R^r$, in terms of $r$, $\alpha$, and the average degree $\overline{d}$ of the corresponding graph. Indeed, if we let $M = M_{\Ce}$ be the corresponding Gram matrix, then applying \Cref{lem_schnirelman} to the matrix $M - \alpha J$ yields
\begin{equation} \label{eq_simple_bound}
|\Ce| \leq \left( 1 + \left( \frac{2 \alpha }{1 - \alpha} \right)^2 \overline{d} \right) (r+1).
\end{equation} 
However, in the proof of \Cref{thm_bound1_r}, we will prove a slightly more precise bound, which requires an assumption on how large the maximum degree $\Delta$ is. We now turn our attention toward obtaining such an upper bound on $\Delta$. We first apply \Cref{main_r} to obtain a weaker upper bound on $\Delta$, provided that $\lambda_1 > \frac{1-\alpha^2}{2\alpha^2}$ and $n$ is sufficiently large. We will later apply bootstrapping arguments in order to sufficiently improve this bound.

\begin{lemma} \label{lem_degree}
Let $0 < \alpha < 1$ and let $\Ce$ be a spherical $\{\alpha, -\alpha\}$-code with $|\Ce| \geq \binom{1/\alpha^2 - 1}{2}$. If the largest eigenvalue $\lambda_1 = \lambda_1(M)$ of the Gram matrix $M = M_\Ce$ satisfies $\lambda_1 > \frac{1-\alpha^2}{2\alpha^2}$ and has a corresponding eigenvector $x$ with no negative coordinates, then the maximum degree of the corresponding graph $G = G_{\Ce}$ satisfies
\[
\Delta_G < \frac{1 + 3\alpha^2 - 4 \alpha^3}{8 \alpha^3}. 
\]
\end{lemma}
\begin{proof}
By normalizing, we may assume that $x$ is a unit vector. Now let $n = |\Ce|$, let $u \in \Ce$, and observe that \Cref{main_r} implies 
\begin{align*}
\frac{x(u)^2}{n} \geq \frac{1 - \frac{1-\alpha^2}{2 \alpha^2 \lambda_1}}{ \frac{n \lambda_1^2  }{\alpha^2 n  + 1 - \alpha^2} - \frac{1-\alpha^2}{2 \alpha^2} n }
\eqqcolon Q,
\end{align*} 
where the numerator of $Q$ is positive by assumption and the denominator of $Q$ is positive due to \eqref{eq_trace_square} and \eqref{eq_lambda_small}. Since $x$ has no negative coordinates, we conclude that $x(u) \geq \sqrt{Q n}$ for all $u \in \Ce$ and it follows from \eqref{eq_gram_adjacency} that
\begin{align*}
(\lambda_1 - 1 + \alpha) \sqrt{Q n} \leq (\lambda_1 - 1 + \alpha)x(u) 
= ( (M - (1-\alpha)I)x)(u)
&= ( (\alpha J - 2\alpha A)x )(u)\\
&= \alpha \inprod{\one}{x} - 2 \alpha \sum_{v \in N_G(u)}{x(v)}\\
&\leq \alpha \sqrt{n} - 2 \alpha \sqrt{Q n} \cdot d_G(u),
\end{align*}
so we have $d_G(u) \leq  \frac{1}{2\sqrt{Q}} - \frac{\lambda_1}{2 \alpha} + \frac{1 - \alpha}{2 \alpha}$. It now remains to verify that $Q$ satisfies the lower bound
\begin{equation} \label{eq_Q}
Q > \frac{\alpha^2}{ \lambda_1^2 \left(1  +  \frac{1-\alpha^2}{2\alpha^2 \lambda_1} \right) }.
\end{equation}
Indeed, if \eqref{eq_Q} holds then we have $\frac{1}{\sqrt{Q}} < \frac{\lambda_1}{\alpha} \sqrt{1 + \frac{1-\alpha^2}{2\alpha^2 \lambda_1} } < \frac{\lambda_1}{\alpha} + \frac{1 - \alpha^2}{4 \alpha^3}$ and thus
\[ 
d_G(u) < \frac{1-\alpha^2}{8 \alpha^3} + \frac{1-\alpha}{2\alpha} = \frac{1 + 3\alpha^2 - 4 \alpha^3}{8 \alpha^3}. 
\]
Now observe that by cross multiplying and collecting terms, \eqref{eq_Q} is equivalent to
\[
\left(\frac{1-\alpha^2}{2 \alpha^2} \right)^2 - \frac{1-\alpha^2}{2}n  < \lambda_1^2 \left( 1 - \frac{\alpha^2 n}{\alpha^2 n + 1 - \alpha^2}  \right) 
= \frac{1-\alpha^2 }{\alpha^2 n + 1 - \alpha^2 } \lambda_1^2,
\]
so if we define $f(n) = \left( \frac{1 - \alpha^2}{4 \alpha^4} - \frac{n}{2} \right) \left( \alpha^2 n + 1 - \alpha^2 \right)$, then this is also equivalent to $f(n) < \lambda_1^2$. By computing its derivative, it is easy to see that $f(n)$ is decreasing for $n \geq \frac{1}{2} \binom{1/\alpha^2 - 1}{2}$ and we have $n \geq \max\left( \binom{1/\alpha^2 - 1}{2}, 0 \right) \geq \frac{1}{2} \binom{1/\alpha^2 - 1}{2}$, so $f$ is maximized when $n = \max\left( \binom{1/\alpha^2 - 1}{2}, 0 \right)$. Now \eqref{eq_Q} follows by observing that for both $n = \binom{1/\alpha^2 - 1}{2}$ and $n = 0$, we have $f(n) = \left( \frac{1-\alpha^2}{2 \alpha^2} \right)^2 < \lambda_1^2$. 
\end{proof}

\begin{remark} \label{rem_direct_lambda}
Note that \Cref{lem_degree}, when used together with \eqref{eq_simple_bound} and \Cref{lem_lambda_small}, already implies that $N^\R_\alpha(r) \leq \max \left( \binom{1/\alpha^2 - 1}{2}, O\left(r / \alpha \right) \right)$. Moreover, instead of using \Cref{lem_degree}, we note that a more precise application of \Cref{main_r} can be used to show that the $O\left(r / \alpha \right)$ term is at most $\frac{1+\alpha}{2\alpha}r + O(1/\alpha^3)$ (see \Cref{section_c} for the analogous argument in the complex case). Such a bound was recently conjectured to be obtainable from the second level of the Lasserre hierarchy by de Laat, Keizer, and Machado \cite{LKM22}, so it would be interesting to determine if this conjecture follows from our arguments.
\end{remark}

To improve on the $O(r/\alpha)$ bound mentioned in the preceding remark, we will strengthen the degree bound of \Cref{lem_degree} by a factor of $\alpha$ by using a bootstrapping argument via some new Alon--Boppana-type theorems. To this end, it will be convenient to introduce the following spectral graph parameter. For any graph $G$ with adjacency matrix $A = A_G$, we define
\[
\beta(G) = \max_{\substack{x \in \R^{V(G)} \backslash \{0\} \\ x \perp \one}}{\frac{x^{\intercal} A x}{x^{\intercal} x}},
\]
i.e. the maximum Rayleigh quotient over all vectors orthogonal to the all ones vector $\one$. Note that via the Courant min-max principle, the second largest eigenvalue satisfies $\lambda_2(G) \leq \beta(G)$ with equality when $G$ is regular, so that $\beta$ can be seen as an extension to all graphs of the second largest eigenvalue of a regular graph. Using \eqref{eq_gram_adjacency}, we immediately obtain the following characterization of $\beta$ for the graph corresponding to a spherical $\{\alpha, -\alpha\}$-code.

\begin{lemma} \label{lem_beta_upper}
Let $0 < \alpha < 1$ and let $\Ce$ be a spherical $\{\alpha, -\alpha\}$-code in $\R^r$ with corresponding graph $G = G_{\Ce}$. Then 
\[
\beta(G) \leq \frac{1-\alpha}{2\alpha}
\]
with equality whenever $|\Ce| \geq r + 2$.
\end{lemma}
\begin{proof}
Let $M$ and $A$ be the corresponding Gram and adjacency matrices, respectively and let $x \in \R^\Ce \backslash \{0\}$ be such that $x \perp \one$. Then $Jx = 0$ and so via \eqref{eq_gram_adjacency} we have
\[
0 \leq x^{\intercal} M x 
= (1-\alpha) x^{\intercal} I x - 2 \alpha x^{\intercal} A x,
\]
so that $\frac{x^{\intercal}Ax}{x^{\intercal}x} \leq \frac{1-\alpha}{2\alpha}$. Since $x$ was arbitrary, we conclude that $\beta(G) \leq \frac{1-\alpha}{2\alpha}$ as desired.

Moreover, if $|\Ce| \geq r+2$ then since $\rk(M) \leq r$, the nullspace of $M$ has dimension at least 2 and must therefore contain a nonzero vector $y$ which is orthogonal to $\one$. Using \eqref{eq_gram_adjacency}, we conclude
\[
0 = y^{\intercal} M y
= (1-\alpha) y^{\intercal} y - 2 \alpha y^{\intercal} A y,
\]
which implies that $\beta(G) \geq \frac{y^{\intercal} A y}{y^{\intercal} y} = \frac{1-\alpha}{2\alpha}$.
\end{proof}

Having established an upper bound on $\beta$, we now state a variant of the Alon--Boppana theorem which gives a lower bound on $\beta$ for any graph $G$ with a given maximum degree. In particular, we follow Friedman and Tillich's approach \cite{FT05} of starting with an eigenvector corresponding to a subgraph $H \subseteq G$ and then ``projecting out the constant''  in order to make this eigenvector orthogonal to the all ones vector.

\begin{lemma} \label{lem_friedman}
Let $G$ be a graph on $n$ vertices with maximum degree $\Delta = \Delta(G)$.
For any subgraph $H \subseteq G$ with $x$ being a unit eigenvector corresponding to $\lambda_1(H)$, we have
\[
\beta(G) \geq \lambda_1(H) - \frac{2 \Delta}{n} \inprod{x}{\one}^2.
\]
\end{lemma}
\begin{proof}
First note that by the Perron--Frobenius theorem, all coordinates of $x$ are nonnegative. Now observe that we may extend $x$ to $\R^{V(G)}$ by defining $x(u) = 0$ for all $u \notin V(H)$, so that we conclude $x^{\intercal} A x \geq \lambda_1(H)$. Now we define $y = x - \frac{\inprod{x}{\one}}{n}\one$, i.e.\ the projection of $x$ onto the orthogonal complement of $\one$. The desired bound now follows by computing that $y^{\intercal} y = ||x||^2 - \frac{\inprod{x}{\one}^2}{n} \leq 1$ and
\begin{align*}
y^{\intercal} A y 
= x^{\intercal} A x - 2\frac{\inprod{x}{\one}}{n} \one^{\intercal} A x + \frac{\inprod{x}{\one}^2}{n^2} \one^{\intercal} A \one
&\geq \lambda_1(H) - 2\frac{\inprod{x}{\one}}{n} \sum_{u \in V(G)}{x(u) d_G(u) }\\
&\geq \lambda_1(H) - \frac{2 \Delta}{n}\inprod{x}{\one}^2. \qedhere
\end{align*}
\end{proof}

The previous lemma suggests finding a small subgraph $H$ with $\lambda_1(H)$ large and indeed, in the proof of the usual Alon--Boppana theorem one takes $H$ to be a ball centered at a vertex. Indeed, for the proof of \Cref{thm_bound1_r} we take $H = K_{1,t}$, i.e.\ the graph consisting of a vertex connected to $t$ other vertices, and so we will first determine $\lambda_1(K_{1,t})$ and its corresponding unit eigenvector.

\begin{lemma} \label{lem_star}
For all $t \in \N$, we have $\lambda_1(K_{1,t}) = \sqrt{t}$ and the corresponding unit eigenvector $x$ satisfies $\inprod{\one}{x} = (\sqrt{t} + 1)/\sqrt{2} $.
\end{lemma}
\begin{proof}
Let $H = K_{1,t}$ so that we have $V(H) = T \cup \{ w  \}$ and $E(H) = \{w v : v \in T \}$. If we let $A = A_H$ be the corresponding adjacency matrix, then it is a straightforward calculation to verify that the maximum of $x^\intercal A x = 2 x(w) \sum_{v \in T}{x(v)}$ over all unit vectors $x \in \R^{V(H)}$ occurs when $x(v) = 1/\sqrt{2t}$ for $v \in T$ and $x(w) = 1/\sqrt{2}$, in which case we have $x^{\intercal} A x = \sqrt{t}$ and $\inprod{\one}{x} = (\sqrt{t} + 1)/\sqrt{2} $.
\end{proof}

\noindent We now combine \Cref{lem_friedman} and \Cref{lem_star} to obtain the desired lower bound on $\beta$.

\begin{lemma} \label{lem_beta_lower}
Let $G$ be a graph on $n$ vertices with maximum degree $\Delta = \Delta_G$. For all $t \in \N$ with $t \leq \Delta$, we have
\[
\beta(G) \geq \sqrt{t} - \frac{\Delta }{n}\left(\sqrt{t} + 1\right)^2.
\]
\end{lemma}
\begin{proof}
Let $w \in V(G)$ be a vertex having degree $d_G(v) = \Delta$ and let $H$ be a subgraph consisting of $w$ together with $t$ of its neighbors. Applying \Cref{lem_star} and \Cref{lem_friedman} with $H$, we have
\[
\beta(G) \geq \lambda_1(H) - \frac{2 \Delta}{n}\frac{\left(\sqrt{t} + 1 \right)^2}{2} \geq \sqrt{t} - \frac{\Delta}{n} \left(\sqrt{t} + 1\right)^2. \qedhere
\]
\end{proof}

Using \Cref{lem_beta_upper} and \Cref{lem_beta_lower}, we now derive the bootstrapping argument which improves the upper bound on the maximum degree of \Cref{lem_degree} by a factor of $\alpha$.

\begin{lemma} \label{lem_bootstrap}
Let $0 < \alpha \leq 1/7$ and let $\Ce$ be a spherical $\{\alpha, -\alpha\}$-code with $|\Ce| \geq \binom{1/\alpha^2 - 1}{2}$. If the largest eigenvalue $\lambda_1$ of the Gram matrix $M = M_\Ce$ satisfies $\lambda_1 > \frac{1-\alpha^2}{2\alpha^2}$ and has a corresponding eigenvector $x$ with no negative coordinates, then the maximum degree of the corresponding graph $G = G_{\Ce}$ satisfies
\[
\Delta_G < \frac{1}{4 \alpha^2}.
\]
\end{lemma}
\begin{proof}
Let $n = |\Ce|$. Using \Cref{lem_degree}, we have $\Delta = \Delta_G < \frac{1 + 3\alpha^2}{8\alpha^3}$. Since $n \geq \binom{1/\alpha^2 - 1}{2} > \frac{1 - 3 \alpha^2}{2 \alpha^4}$ and $\alpha \leq 1/7$, we conclude that 
\[
\frac{\Delta}{n} \leq  \frac{(1+3\alpha^2) \alpha}{(1 - 3\alpha^2) 4} < \frac{ \alpha }{ 3 } < \frac{1}{20}.
\]
We will now show that $\Delta \leq \left \lceil \left( \frac{n}{4 \Delta} \right)^2 \right \rceil - 1$, so suppose for sake of contradiction that $\Delta \geq \left \lceil \left( \frac{n}{4 \Delta} \right)^2 \right \rceil$. Let $s = \left \lceil \left( \frac{n}{4 \Delta} \right)^2 \right \rceil$ and observe that $\sqrt{s} < \sqrt{ \left( \frac{n}{4 \Delta} \right)^2 + 1} < \frac{n}{4 \Delta} + \frac{1}{10}$, so that applying \Cref{lem_beta_upper} and \Cref{lem_beta_lower} with $t = s$ yields
\begin{align*}
\frac{1 - \alpha}{2 \alpha} \geq \beta(G) 
\geq \sqrt{s} - \frac{\Delta}{n} \left( \sqrt{s} + 1 \right)^2
\geq \frac{n}{4 \Delta}  - \frac{\Delta}{n} \left( \frac{n^2}{16 \Delta^2 } + \frac{11n }{20 \Delta}  + \frac{121 }{100} \right)
&= \frac{3n}{16 \Delta}  - \frac{11}{20}  - \frac{121 \Delta}{100 n} \\
&> \frac{9}{16 \alpha}  - \frac{11}{20}  - \frac{121 \alpha}{300},
\end{align*}
which contradicts the assumption that $\alpha \leq 1/7$.

Now that we have established $\Delta \leq \left \lceil \left( \frac{n}{4 \Delta} \right)^2 \right \rceil - 1 < \left( \frac{n}{4 \Delta} \right)^2$, we have $\sqrt{\Delta} < \frac{n^{1/3}}{2^{2/3}}$ and so we apply \Cref{lem_beta_upper} and \Cref{lem_beta_lower} once again, but this time with $t = \Delta$, in order to conclude that
\begin{align*}
\frac{1 - \alpha}{2 \alpha} \geq \beta(G) 
\geq \sqrt{\Delta} \left( 1 -  \frac{\sqrt{\Delta}}{n} \left( \sqrt{\Delta} + 1 \right)^2 \right)
&\geq \sqrt{\Delta} \left( 1 -  \frac{ n^{1/3} }{2^{2/3} n} \left( \frac{n^{1/3}}{2^{2/3}} + 1 \right)^2 \right)\\
&= \sqrt{\Delta} \left( 1 - \frac{1}{4} - \frac{1}{(2n)^{1/3}} - \frac{1}{(2n)^{2/3}} \right)\\
&>  \frac{2}{3}\sqrt{\Delta},
\end{align*}
where the last inequality follows since $\alpha \leq 1/7$ implies that $n \geq \binom{1/\alpha^2 - 1}{2} \geq 1128$. Thus we have established that $\sqrt{\Delta} < \frac{3( 1- \alpha)}{4 \alpha}$ and again using $\alpha \leq 1/7$, we therefore compute
\[
 \frac{\sqrt{\Delta}}{n} < \frac{3(1-\alpha)}{4\alpha} \frac{2\alpha^4}{(1-\alpha^2)(1-2\alpha^2)}
 = \frac{3 \alpha^3}{2(1+\alpha)(1-2\alpha^2)}
 < \frac{3 \alpha^3}{2}
\]
and
\[
 \Delta + 2 \sqrt{\Delta} + 1 < \frac{9 (1-\alpha)^2}{16 \alpha^2} + \frac{3(1-\alpha)}{2\alpha} + 1
 = \frac{9}{16 \alpha^2}  + \frac{3}{8 \alpha} +  \frac{1}{16}
 < \frac{2}{3 \alpha^2},
\]
The desired bound $\Delta < \frac{1}{4\alpha^2}$ now follows by applying \Cref{lem_beta_upper} and \Cref{lem_beta_lower} one final time with $t = \Delta$, yielding
\[
\frac{1 - \alpha}{2 \alpha} \geq \beta(G) 
\geq  \sqrt{\Delta} \left( 1 -  \frac{\sqrt{\Delta}}{n} \left(  \Delta + 2 \sqrt{\Delta} + 1  \right) \right)
> \sqrt{\Delta} (1-\alpha). \qedhere
\]
\end{proof}

It now remains to obtain one small final improvement to the bound of \Cref{lem_bootstrap} via another bootstrapping argument. To accomplish this, we derive a sharp Alon--Boppana-type theorem, under the assumption that the multiplicity of the second eigenvalue is large. To this end, we will need the following useful lemma. Note that a version of this lemma already appears in our recent work together with Buci\'c \cite{BB24} and is partially based on an idea appearing in \cite{JTYZZ21}.

\begin{lemma} \label{lem_optimal_multiplicity}
If $H$ is a non-empty subgraph of a graph $G$ with $\lambda_1(H) >  \beta(G)$, then 
\[
m_G(\beta(G)) \le \Delta_G |H|.
\]
\end{lemma}
\begin{proof}
First let $\beta = \beta(G)$, $\Delta = \Delta_G$ and observe that it is sufficient to prove the result for connected $H$. Indeed, one may always choose $H$ to be a vertex-minimal non-empty subgraph satisfying $\lambda_1(H) >  \beta$, which implies that $H$ is connected and therefore, there are at most $|H|+(\Delta-1)|H|=\Delta|H|$ vertices at a distance of at most one from $H$.  Observe that if we let $H'$ be the induced subgraph of $G$ consisting of vertices at a distance of at least two from $H$, then $H$ and $H'$ have disjoint vertex sets and there are no edges between them. 

We now claim that this implies $\lambda_1(H') < \beta$. Indeed, let $x \colon V(G) \to \R$ be the vector supported on $V(H)$ such that its restrictions to $V(H)$ is a unit eigenvectors for $\lambda_1(H)$, chosen via the Perron-Frobenius theorem to have only nonnegative entries. Similarly, let $x' \colon V(G) \to \R$ be such a vector for $H'$. Since $V(H)$ and $V(H')$ are disjoint, we have $\inprod{x}{x'} = 0$ and thus, there exist scalars $s,t \in \R$ such that $y = sx + tx'$ is orthogonal to $\one$ and $s^2 + t^2 = 1$ (so that $y$ is a unit vector). Furthermore, since the entries of $x$ and $x'$ are nonnegative, they are both not orthogonal to $\one$ and hence, $s$ and $t$ must be nonzero. Finally, since $H$ and $H'$ have no edges between them, we have $x^\intercal A x' = 0$ and thus,
\[
\beta \geq y^\intercal A y = s^2 x^\intercal A x + t^2 x'^\intercal A x' = s^2 \lambda_1(H) + (1 - s^2) \lambda_1(H') > s^2 \beta + (1-s^2) \lambda_1(H'),
\]
which gives $\lambda_1(H')  < \beta$, as desired.

We have now established that by starting with $G$ and removing at most $\Delta|H|$ vertices at a distance of at most one from $H$, we obtain a graph $H'$ with $\lambda_1(H') < \beta$, so $\m_{H'}(\beta) = 0$. The desired result then follows from the Cauchy interlacing theorem.
\end{proof}

\begin{lemma} \label{lem_bootstrap_optimal}
Let $0 < \alpha \leq 1/7$ and let $\Ce$ be a spherical $\{\alpha, -\alpha\}$-code in $\R^r$ with $|\Ce| \geq \max\left(\binom{1/\alpha^2 - 1}{2}, r + \frac{1}{16\alpha^4} +  \frac{1}{4\alpha^2} + 1 \right)$. If the largest eigenvalue $\lambda_1$ of the Gram matrix $M = M_\Ce$ satisfies $\lambda_1 > \frac{1-\alpha^2}{2\alpha^2}$ and has a corresponding eigenvector $x$ with no negative coordinates, then the maximum degree of the corresponding graph $G = G_{\Ce}$ satisfies
\[
\Delta_G \leq \left( \frac{1-\alpha}{2 \alpha} \right)^2. 
\]
\end{lemma}
\begin{proof}
Let $n = |\Ce|$ and let $\Delta = \Delta_G$ be the maximum degree of $G$ and let $A = A_G$ be the corresponding adjacency matrix. We first apply \Cref{lem_bootstrap} to obtain the bound $\Delta < \frac{1}{4\alpha^2}$. Since $n \geq r+2$, \Cref{lem_beta_upper} implies that $\beta = \beta(G) = \frac{1-\alpha}{2\alpha}$. 

Now suppose for sake of contradiction that $\Delta > \beta^2$ and let $H$ be a subgraph of $G$ consisting of a vertex and its $\Delta$ neighbors. \Cref{lem_star} implies that $\lambda_1(H) = \sqrt{\Delta} > \beta$ and thus, we may apply \Cref{lem_optimal_multiplicity} to conclude that $\m_G(\beta) \leq \Delta |H| < \Delta (\Delta + 1)$.
Using \eqref{eq_gram_adjacency}, the rank–nullity theorem, and the subadditivity of rank, one may conclude that $n - \m_G(\beta) \geq r + 1$ and hence,
\[
n \leq r + 1 + \Delta(\Delta + 1) < r + \frac{1}{16\alpha^4} +  \frac{1}{4\alpha^2} + 1,
\]
a contradiction.
\end{proof}

We now have all of the necessary ingredients to establish \Cref{thm_bound1_r}

\begin{proof}[Proof of \Cref{thm_bound1_r}] 
We first consider the case $\alpha > 1/7$. If $\alpha \notin \{1/3, 1/5\}$, then Neumann's bound gives $N^\R_\alpha(r) \leq 2r$ (see \cite{LS73}) . Also if $r < 1/\alpha^2 - 2$, then $N^\R_\alpha(r) < \binom{1/\alpha^2 - 1}{2}$ follows from the relative bound, so we may henceforth assume $r \geq 1/\alpha^2 - 2$. Otherwise if $\alpha \in \{1/3, 1/5\}$, then $N_\alpha^\R(r) \leq \binom{1/\alpha^2 -1}{2}$ follows from the theorems of Lemmens and Seidel \cite{LS73} and Cao, Koolen, Lin, and Yu \cite{CKLY22}, which were mentioned in \Cref{intro_real}. Therefore, we henceforth assume that $\alpha \leq 1/7$.

As explained in the outline, we let $\Ce$ be a spherical $\{\alpha, -\alpha\}$-code in $\R^r$ with $n = |\Ce| = N^\R_\alpha(r)$ given by \Cref{lem_nonnegative_eigenvector}, so that the Gram matrix $M = M_\Ce$ has largest eigenvalue $\lambda_1$ with a corresponding eigenvector $x$ having no negative coordinates. Since $2r - \frac{(1+\alpha)^2}{8 \alpha^2}$ is larger than $ \binom{1/\alpha^2 - 1}{2} $ precisely when $r > r_\alpha$, it will suffice to show that if $n \geq \binom{1/\alpha^2 - 1}{2}$, then exactly one of the following holds: Either $n \leq \binom{1/\alpha^2 - 1}{2}$ with equality only if $\Ce \subseteq \R^{1/\alpha^2 - 2}$ or otherwise $n \leq 2r - \frac{(1+\alpha)^2}{8 \alpha^2}$. These two cases will correspond to a dichotomy based on whether $\lambda_1$ is small or large.

If $\lambda_1 \leq \frac{1-\alpha^2}{2\alpha^2}$, then \Cref{lem_lambda_small} yields the first desired bound $n \leq \binom{1/\alpha^2 - 1}{2}$, as well as a characterization of equality. Otherwise, $\lambda_1 > \frac{1-\alpha^2}{2\alpha^2}$. Note that since $r \geq 1/\alpha^2 - 2$ and $\alpha \leq 1/7$, we have that $r + \frac{1}{16\alpha^4} +  \frac{1}{4\alpha^2} + 1 \leq 2r - \frac{(1+\alpha)^2}{8 \alpha^2}$ and therefore, if $n < r + \frac{1}{16\alpha^4} +  \frac{1}{4\alpha^2} + 1$ then we are done. Otherwise, $n \geq r + \frac{1}{16\alpha^4} +  \frac{1}{4\alpha^2} + 1$ and so, we may apply \Cref{lem_bootstrap_optimal} to conclude that the maximum degree of the corresponding graph $G = G_{\Ce}$ satisfies $\Delta_G \leq \left( \frac{1-\alpha}{2 \alpha} \right)^2$. 

Having established the required upper bound on $\Delta_G$, we observe that it implies a strong lower bound on $\lambda_1$. Indeed, via \eqref{eq_gram_adjacency}, we have that 
\[
\lambda_1 \geq \frac{\one^\intercal M \one}{n} 
\geq 1 - \alpha + \alpha n - 2 \alpha \Delta
\geq  \alpha n -  \frac{(1-\alpha)(1-3\alpha)}{2 \alpha}.
\]
It follows that the matrix $L = M - \lambda_1 x x^\intercal$ is close in Frobenius distance to the identity matrix $I$, so it remains to apply \Cref{lem_schnirelman} to obtain the desired upper bound on $n$. To this end, we compute that $\tr(L) = \tr(M) - \lambda_1 = n - \lambda_1$, $\rk(L) = r-1$, and using \eqref{eq_trace_square} that $\tr(L^2) = \lambda_2(M)^2 + \ldots + \lambda_r(M)^2 = \alpha^2 n^2 + (1-\alpha^2)n - \lambda_1^2$. Applying \Cref{lem_schnirelman} to $L$ yields
\[
f(\lambda_1) \coloneqq (r-1)(\alpha^2 n^2 + (1-\alpha^2)n - \lambda_1^2) - (n - \lambda_1)^2 = \tr(L^2)  \rk(L) - \tr(L)^2 \geq 0.
\]
By differentiating $f$, one can easily see that $f$ is decreasing if and only if $\lambda_1 \geq n/r$ and it follows from $n \geq \binom{1/\alpha^2 - 1}{2}$ and $r \geq 1/\alpha^2 - 2$ that we indeed have
$\lambda_1 \geq \alpha n - \frac{(1-\alpha)(1-3\alpha)}{2 \alpha} > n/r$. Therefore,
\begin{align*}
0 &\leq f\left(\alpha n - \frac{(1-\alpha)(1-3\alpha)}{2 \alpha} \right)\\
&= (r-1)\left(\alpha^2 n^2 + (1-\alpha^2)n - \left(\alpha n - \frac{(1-\alpha)(1-3\alpha)}{2 \alpha}  \right)^2 \right) - \left(n - \alpha n + \frac{(1-\alpha)(1-3\alpha)}{2 \alpha} \right)^2\\
&= (r-1)\left( 2(1-\alpha)^2 n  - \frac{(1-\alpha)^2(1-3\alpha)^2}{4 \alpha^2}  \right) - (1-\alpha)^2 \left(n + \frac{1-3\alpha}{2 \alpha} \right)^2\\
&= (1 - \alpha )^2 \left( -n^2 + 2\left( r - \frac{1 - \alpha}{2\alpha} \right) n - r \left( \frac{1-\alpha}{2 \alpha} - 1 \right)^2 \right).
\end{align*}
Hence, if we define $g(n) \coloneqq n^2 - 2\left( r - \frac{1 - \alpha}{2\alpha} \right) n +  r \left( \frac{1-\alpha}{2 \alpha} - 1 \right)^2$, then we have $g(n) \leq 0$ and $g$ is a quadratic polynomial with positive leading term and roots
\[
r - \frac{1 - \alpha}{2\alpha}  \pm \sqrt{\left( r - \frac{1 - \alpha}{2\alpha} \right)^2 -  r \left( \frac{1-\alpha}{2 \alpha} - 1 \right)^2}.
\]
It follows that $n$ is at most the largest root, i.e.\ we conclude that
\begin{align*}
n \leq r - \frac{1 - \alpha}{2\alpha}  + \sqrt{\left( r - \frac{1 - \alpha}{2\alpha} \right)^2 -  r \left( \frac{1-\alpha}{2 \alpha} - 1 \right)^2}
&\leq r - \frac{1 - \alpha}{2\alpha}  + r - \frac{1 - \alpha}{2\alpha}  -  r \frac{ \left( \frac{1-\alpha}{2 \alpha} - 1 \right)^2}{2 \left( r - \frac{1 - \alpha}{2\alpha}  \right)}\\
&\leq 2r - \frac{1 - \alpha}{\alpha} - \frac{1}{2}\left( \frac{1-\alpha}{2 \alpha} - 1 \right)^2\\
&= 2r - \frac{(1 + \alpha)^2}{8 \alpha^2}.
\qedhere
\end{align*}
\end{proof}

In the previous proof, we used \Cref{lem_beta_lower} with $H$ being a ball of radius 1, but in order to prove \Cref{thm_bound3_r}, we will take $H$ to be a ball of some radius $s \geq 2$. More specifically, for any graph $G$ with vertex $v \in V(G)$, we define $G(v,s)$ to be the subset of vertices for which there exists a path from $v$ of length at most $s$, i.e.\ the ball of radius $s$ centered at $v$ in $G$. We first recall the well-known fact that $|G(v,q)| \leq \sum_{i=0}^{s}{\Delta^i} = \frac{\Delta^{s+1} - 1}{\Delta - 1}$ where $\Delta = \Delta_G$ is the maximum degree. Indeed, this follows because there can be at most $\Delta^i$ vertices at a distance of exactly $i$ from $v$. We will also make use of the following lemma of Jiang \cite{J19} (see also Jiang and Polyanskii \cite{JP20}) showing that for any $s \in \N$, every graph has a ball of radius $s$ with a large eigenvalue.

\begin{lemma}[Jiang \cite{J19}] \label{lem_jiang}
Let $s \in \N$ and let $G$ be a graph on $n$ vertices with average degree $\overline{d} = \overline{d}(G) \geq 1$. Then there exists a vertex $v_0 \in V(G)$ such that 
\[
\lambda_1(G(v_0,s)) \geq 2 \sqrt{\overline{d}-1} \cos\left(\frac{\pi}{s+2}\right).
\]
\end{lemma}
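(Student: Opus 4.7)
The plan is to construct, for each vertex $v \in \V(G)$, an explicit test vector $f_v \in \R^{\V(G)}$ supported on $\V(G(v,q))$, and analyze its Rayleigh quotient $\inprod{f_v}{A f_v}/\inprod{f_v}{f_v}$ against the adjacency matrix $A$ of $G$. Since $f_v$ vanishes outside $\V(G(v,q))$, this coincides with the Rayleigh quotient for the restricted adjacency matrix of $G(v,q)$, and by the Courant-Fischer theorem it lower bounds $\lambda_1(G(v,q))$. If a suitably weighted average of these quotients over $v \in \V(G)$ is at least $2\sqrt{\overline{d}-1}\cos(\pi/(q+2))$, then at least one vertex $v_0$ witnesses the claimed bound.

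The guiding intuition for $f_v$ comes from the ball of radius $q$ in the infinite $\overline{d}$-regular tree $T$, whose adjacency spectral radius is exactly $2\sqrt{\overline{d}-1}\cos(\pi/(q+2))$, with Perron eigenvector of the form $\phi_k = (\overline{d}-1)^{-k/2}\psi_k$ at tree-distance $k$ from the root, where $(\psi_k)$ satisfies a Chebyshev-type recursion and vanishes at the virtual boundary $k = q+1$. I would lift this to $G$ by setting
\[
f_v(u) = \sum_{k=0}^{q} c_k N_k(v,u),
\]
where $N_k(v,u)$ counts non-backtracking walks of length $k$ from $v$ to $u$ in $G$, and $(c_k)$ is the tree eigenvector just described. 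Non-backtracking walks are the right combinatorial object because each lifts uniquely to a geodesic in the universal cover of $G$, so their counts locally mirror tree counts and naturally produce the factor $(\overline{d}-1)^{k/2}$.

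To analyze $\inprod{f_v}{A f_v} = \sum_{uw \in \E(G)} f_v(u) f_v(w)$, I would expand the product and apply the defining recursion that a non-backtracking walk of length $k+1$ from $v$ is obtained from one of length $k$ ending at some $w$ by stepping to one of $d(w)-1$ non-repeat neighbors. This unfolds both $\inprod{f_v}{A f_v}$ and $\inprod{f_v}{f_v}$ into explicit sums over non-backtracking walks weighted by degree-dependent factors $d(w)-1$. Summing over $v \in \V(G)$ and invoking $\sum_w d(w) = n\overline{d}$ together with Jensen's inequality applied to the degree sequence, one can replace pointwise factors $d(w)-1$ by the average $\overline{d}-1$; the choice of $(c_k)$ then forces the averaged Rayleigh quotient to saturate at the tree eigenvalue $2\sqrt{\overline{d}-1}\cos(\pi/(q+2))$, and some $v_0$ achieves at least this value.

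The main obstacle is this convexity step: when replacing pointwise $d(w)-1$ by the average $\overline{d}-1$, polynomial expressions $f(d(w))$ in the degree sequence generate error terms whose signs must combine favorably so that irregularity of $G$ only helps. A secondary subtlety is the boundary of $G(v,q)$: non-backtracking walks of length exactly $q$ would contribute asymmetrically to $\inprod{f_v}{A f_v}$, and the vanishing of $\psi_{q+1}$ in the Chebyshev recurrence is precisely what cancels these terms, mimicking the Dirichlet boundary condition satisfied by the tree Perron vector. The assumption $\overline{d} \geq 1$ ensures the factors $(\overline{d}-1)^{-k/2}$ are well-defined; the degenerate case $\overline{d} = 1$ gives a claimed bound of $0$, which holds trivially since adjacency Perron values are nonnegative.
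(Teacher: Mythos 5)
This lemma is not proved in the paper at all: it is quoted verbatim from Jiang \cite{J19}, so the only ``proof'' here is the citation, and your attempt has to be measured against Jiang's argument. Your sketch does follow the same general strategy (Chebyshev-weighted test vectors built from non-backtracking walks, Dirichlet condition at depth $q+1$, averaging over the root vertex), but it has a genuine gap precisely at the step you yourself flag as ``the main obstacle.'' Replacing the pointwise factors $d(w)-1$ by $\overline{d}-1$ is not a routine application of Jensen's inequality. After summing over $v$, what must be shown is an inequality of the form $\sum_k w_k m_k \geq 0$, where $m_k$ is the total number of non-backtracking walks of length $k$ in $G$ and the Chebyshev weights $w_k$ are \emph{negative} for small $k$ and positive for large $k$ (they sum to zero against the regular-tree values $m_k = n\overline{d}(\overline{d}-1)^{k-1}$). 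Lower bounds on the $m_k$ alone cannot control the negative part, and the tempting pointwise substitutes are simply false for irregular graphs (e.g.\ $m_{k+1} \geq (\overline{d}-1)m_k$ already fails for a star). Establishing the needed counting inequality is the actual content of Jiang's proof, building on the Alon--Hoory--Linial non-backtracking walk lemma from their Moore-bound paper; your proposal asserts that the error terms ``combine favorably'' without an argument, so the heart of the proof is missing.

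There is also a structural problem with running the Rayleigh quotient directly in $G$ with the pushed-forward vector $f_v(u)=\sum_k c_k N_k(v,u)$. In $G$, distinct non-backtracking walks (of equal or different lengths) can terminate at the same vertex, so $\|f_v\|^2=\sum_u\bigl(\sum_k c_k N_k(v,u)\bigr)^2$ can strictly exceed the tree quantity $\sum_k c_k^2 N_k(v)$ with $N_k(v)=\sum_u N_k(v,u)$; the extra cross terms in $\inprod{f_v}{Af_v}$ are nonnegative, but nothing guarantees they compensate the inflated denominator, so the quotient is not bounded below by the tree computation. The clean route (and Jiang's) is to do the computation on the unraveled ball, i.e.\ the radius-$q$ ball in the universal cover rooted at a lift of $v$, where the norm identity is exact, prove the averaged bound for the spectral radii of unraveled balls, and then transfer to $G(v,q)$ using the locally injective projection of the unraveled ball into the ball of $G$: pushing forward the nonnegative Perron eigenfunction of the unraveled ball yields $\lambda_1(G(v,q))$ at least its spectral radius via the pointwise eigenvalue inequality. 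Note that this transfer requires an actual eigenfunction of the unraveled ball, not your fixed $\overline{d}$-regular-tree coefficients, which is exactly why the averaging should be phrased on unraveled balls rather than on the pushed-forward vectors. (A minor point in the same spirit: the radius-$q$ ball of the $\overline{d}$-regular tree has spectral radius at least, not exactly, $2\sqrt{\overline{d}-1}\cos(\pi/(q+2))$, and your $(c_k)$ is a test vector for it rather than its Perron eigenvector.)
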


\begin{proof}[Proof of \Cref{thm_bound3_r}]

Via \Cref{lem_nonnegative_eigenvector}, let $\Ce$ be a spherical $\{\alpha, -\alpha\}$-code in $\R^r$ with $n = |\Ce| = N^{\R}_{\alpha}(r)$ such that the Gram matrix $M = M_\Ce$ has a largest eigenvalue $\lambda_1$ with an eigenvector $x$ having no negative coordinates. As mentioned at the end of \Cref{intro_real}, by rotating each standard basis vector in $\R^r$ by the same angle towards the all ones vector $\one$, it is easy to see that we must have $n \geq r$. Moreover, by assumption we have $r \gg 1/\alpha^{2s + 1} \geq 1/\alpha^5$, so that \Cref{lem_lambda_small} implies that $\lambda_1 > \frac{1-\alpha^2}{2\alpha^2}$. It now follows from our previously established bounds that the average degree $\overline{d}$ and the maximum degree $\Delta$ of the corresponding graph satisfy $\overline{d} \leq \Delta \leq O(1/\alpha^2)$. Indeed, if $\alpha > 1/7$, then this follows from \Cref{lem_degree} and if $\alpha \leq 1/7$, then it follows from \Cref{lem_bootstrap}.

We now use \Cref{lem_friedman} in order to obtain a stronger bound on $\overline{d}$. If $\overline{d} \leq 1$ then we are done, and otherwise we may apply \Cref{lem_jiang} to obtain a vertex $v_0$ such that $H = G(v_0,s)$, i.e.\ the ball of radius $s$ centered at $v_0$, satisfies $\lambda_1(H) \geq 2 \sqrt{\overline{d} - 1} \cos\left( \frac{\pi}{s+2} \right)$. Also, if we let $y$ be a unit eigenvector corresponding to $\lambda_1(H)$, then by Cauchy--Schwarz we have $\inprod{y}{\one}^2 \leq ||y||^2 \cdot ||\one||^2 = |H| $ and since $|H| \leq \frac{\Delta^{s+1} - 1}{\Delta - 1} \leq O(\Delta^s)$, we have that
\[
\frac{\Delta \inprod{y}{\one}^2}{n} \leq O\left(\frac{\Delta^{s+1}}{r} \right) \leq O\left( \frac{1}{\alpha^{2s+2} r} \right) \leq \frac{o(1)}{\alpha}.
\]
Therefore, we may apply \Cref{lem_beta_upper} and \Cref{lem_friedman} in order to conclude
\[
\frac{1-\alpha}{2\alpha} \geq \beta(G) \geq \lambda_1(H) - \frac{2 \Delta \inprod{y}{\one}^2}{n} \geq 2\sqrt{\overline{d}-1} \cos\left( \frac{\pi}{s+2} \right) - \frac{o(1)}{\alpha},
\]
so that $\overline{d} \leq \left(\frac{1-\alpha}{2\alpha}\right)^2 \cdot \frac{1+o(1)}{4 \cos^2\left( \frac{\pi}{s+2} \right)} + 1$. The desired result now follows via \eqref{eq_simple_bound}.
\end{proof}

We now turn to proving \Cref{thm_bound4_r}, for which we will need the following sublinear bound on the multiplicity of the second eigenvalue of a connected graph obtained by Jiang, Tidor, Yao, Zhang, and Zhao \cite{JTYZZ21}.
\begin{theorem} \label{thm_JTYZZ}
There exists a constant $B > 0$ such that if $G$ is a connected graph on $n$ vertices with maximum degree $\Delta = \Delta(G)$, then the multiplicity of the second eigenvalue $\lambda_2 = \lambda_2(G)$ satisfies
\[
\m_G(\lambda_2) \leq \frac{B \log{\Delta}}{\log{\log{n}}} n.
\]
\end{theorem}
\begin{proof}
In the proof of Theorem 2.2 in \cite{JTYZZ21}, one may take $c(\Delta, 2) = \frac{1}{ B \log{\Delta} }$ for a sufficiently large constant $B$.
\end{proof}

For the lower bound, we will need the following lemma. However, since its proof, as well as that of \Cref{thm_bound4_r} follow directly from that of Proposition 3.2 and Theorem 1.2 of \cite{JTYZZ21}, respectively, we only provide sketches of the proofs here. In particular, we do not attempt to optimize the constant $C$ in \Cref{thm_bound4_r}.

\begin{lemma} \label{lem_lower_linear}
Let $0 < \alpha < 1$ be such that the spectral radius order $k = k\left(\frac{1-\alpha}{2\alpha}\right) < \infty$. Then for all $r \in \N$, we have
\[
N^\R_\alpha(r) \geq \left \lfloor \frac{k}{k-1} (r-1) \right \rfloor.
\]
\end{lemma}

\begin{proof}[Proof sketch of \Cref{lem_lower_linear}]
By the definition of spectral radius order, there exists a graph $H$ with $k$ vertices such that $\lambda_1(H) = \frac{1-\alpha}{2\alpha}$, so we may let $G$ consist of $\left \lfloor \frac{r-1}{k-1} \right \rfloor $ disjoint copies of $H$ together with $d-1 - (k-1)\left \lfloor \frac{r-1}{k-1} \right \rfloor$ isolated vertices. Note that $G$ has $n = \left \lfloor \frac{k (r-1)}{k-1} \right \rfloor$ vertices. Using the adjacency matrix $A = A(G)$ of $G$, one may define $M = (1-\alpha) I + \alpha J - 2 \alpha A$ and observe that it is an $n \times n$ positive semidefinite matrix with rank at most $r$, so that it is the Gram matrix of some spherical $\{\alpha, -\alpha\}$-code in $\R^r$ with $n$ elements. 
\end{proof}

\begin{proof}[Proof sketch of \Cref{thm_bound4_r}]
First note that the lower bound in item 1 of \Cref{thm_bound4_r} follows from \Cref{lem_lower_linear}. Now for the upper bounds, we begin as in previous proofs by using \Cref{lem_nonnegative_eigenvector} to obtain a $\Ce$ be a spherical $\{\alpha, -\alpha\}$-code in $\R^r$ with $n = |\Ce| = N^{\R}_{\alpha}(r)$ such that the Gram matrix $M = M_\Ce$ has a largest eigenvalue with an eigenvector $x$ having no negative coordinates. We choose $C$ sufficiently large so that $r \geq 2^{1/\alpha^C} > \binom{1/\alpha^2 - 1}{2}$, and it is easy to see that $n \geq r$, so we may apply \Cref{lem_lambda_small}, and \Cref{lem_degree} to conclude that the corresponding graph has maximum degree $\Delta < 1/\alpha^3$.

Since the nullspace of the corresponding Gram matrix $M = M_\Ce$ has dimension at least 2, using \eqref{eq_gram_adjacency} one may conclude that $\frac{1-\alpha}{2\alpha}$ is an eigenvalue of $G$. Therefore $\lambda_1(G) \geq \frac{1-\alpha}{2\alpha}$ and so if we consider the connected components $G_1, \ldots, G_t$ of $G$ ordered so that $\lambda_1(G_1) \geq \ldots \geq \lambda_1(G_t)$, then we either have $\lambda_1(G_1) > \frac{1-\alpha}{2\alpha}$ or $\lambda_1(G_1) = \frac{1-\alpha}{2\alpha}$.  

If $\lambda_1(G_1) > \frac{1-\alpha}{2\alpha}$, then one can show\footnote{Using \Cref{lem_beta_upper}, this follows from $\frac{1-\alpha}{2\alpha} \geq \beta(G) \geq z^\intercal A z$  where $z$ is taken to be a linear combination of eigenvectors corresponding to $\lambda_1(G_1)$ and $\lambda_1(G_i)$, such that $z \perp \one$ (see also the proof of \Cref{lem_optimal_multiplicity})} that $\lambda_1(G_i) < \frac{1-\alpha}{2\alpha}$ for all $i \geq 2$. Therefore, $ \m_{G_1}\left( \frac{1-\alpha}{2\alpha} \right) = \m_G\left( \frac{1-\alpha}{2\alpha} \right)$. Moreover, using \eqref{eq_gram_adjacency}, the rank--nullity theorem, and the subadditivity of rank, one may show that $n - \m_G\left( \frac{1-\alpha}{2\alpha} \right) \leq r + 1$ and so we may apply \Cref{thm_JTYZZ} to $G_1$ to conclude
\[
n - r - 1 \leq \m_{G}\left( \frac{1-\alpha}{2\alpha} \right)  =  \m_{G_1}\left( \frac{1-\alpha}{2\alpha} \right) \leq O \left( \frac{\log{\Delta}}{\log{\log{n}}}n \right) \leq O \left( \frac{\log(1/\alpha)}{\log{\log{r}}}n \right).
\] 
Since $r \geq 2^{1/\alpha^{C}}$, we obtain the bound $n \leq r + \frac{C \log(1/\alpha)}{2 \log{\log{r}}}r$ for $C$ sufficiently large.

Otherwise $\lambda_1(G_1) = \frac{1-\alpha}{2\alpha}$, in which case the existence of $G_1$ implies that the spectral radius $k = k\left( \frac{1-\alpha}{2\alpha} \right) < \infty$, and so we have proven claim 3 of \Cref{thm_bound4_r}. Furthermore, one can show that $\m_G\left( \frac{1-\alpha}{2\alpha} \right) \geq n - r + 1$ (Indeed, by the rank--nullity theorem, it suffices to verify\footnote{This follows from \eqref{eq_gram_adjacency} by using the fact that $(1-\alpha)I  - 2\alpha A$ and $\alpha J$ are both positive semidefinite (so that the intersection of their nullspaces is the nullspace of $M$), together with the Perron--Frobenius theorem (there exists an eigenvector of $A$ corresponding to $\frac{1-\alpha}{2\alpha}$ with all nonnegative coordinates, so that it is not in the nullspace of $\alpha J$).} that $\rk\left((1-\alpha)I - 2 \alpha A \right) \leq \rk(M) - 1$). Now, if we let $S = \left \{ i : \lambda_1(G_i) = \frac{1-\alpha}{2\alpha} \right \}$, then by definition of $k$ we have $|V(G_i)| \geq k$ for all $i \in S$ and hence $n \geq |S| k$. Moreover, the Perron--Frobenius theorem implies that $\lambda_2(G_i) < \lambda_1(G_i)$ for all $i$ and hence $|S| = \m_G\left( \frac{1-\alpha}{2\alpha} \right)$. Therefore, $n \geq  \m_G\left( \frac{1-\alpha}{2\alpha} \right) k \geq (n - r + 1)k$ and so we conclude that $n \leq \left \lfloor \frac{k (r-1)}{k-1} \right \rfloor$. 

Finally, observe that $r \geq 2^{1/\alpha^{C(k-1)}}$ implies $\left \lfloor \frac{k (r-1)}{k-1} \right \rfloor > r + \frac{C \log(1/\alpha)}{2 \log{\log{r}} }r$ for $C$ sufficiently large, which establishes claim 1 of \Cref{thm_bound4_r}. Otherwise if $r <  2^{1/\alpha^{C(k-1)}}$, then $\left \lfloor \frac{k (r-1)}{k-1} \right \rfloor < r + \frac{C \log(1/\alpha)}{\log{\log{r}} } r$ and so claim 2 of \Cref{thm_bound4_r} follows.
\end{proof}

\section{Equiangular lines in \texorpdfstring{$\C^r$}{C\^{}r}} \label{section_c}

The goal of this section will be to prove \Cref{thm_bound1_c}. The argument we will present is a generalization of part of the proof of \Cref{thm_bound1_r}, with \Cref{main_r} replaced by \Cref{main_c}. The key difference will be that we do not use a bootstrapping\footnote{Although one can generalize the definition of degree to the complex setting, we don't know of an appropriate Alon--Boppana-type theorem which would work for this definition.} argument and instead directly use \Cref{main_c} in order to obtain bounds on the largest eigenvalue of the corresponding Gram matrix, see the outline given below.

Now let $r \in \N$ and let $\Le$ be a set of $n$ equiangular lines in $\C^r$. As previously noted, if we choose a complex unit vector along each line $\ell \in \Le$, the resulting collection $\Ce$ forms a spherical $S^1(\alpha)$-code, where $S^1(\alpha)$ is the circle of radius $\alpha$ centered at $0$ in $\C$. Similar to \Cref{section_r}, we say that $\Ce$ \emph{represents} $\Le$ and so instead of working with $\Le$ directly, we will consider some spherical $S^1(\alpha)$-code $\Ce$ which represents $\Le$. Just as in \Cref{section_r}, the case $\alpha = 0$ is trivial and so we assume $\alpha > 0$. 

\begin{proof}[Outline of the proof of \Cref{thm_bound1_c}]
We will begin by summing the inequalities given by \Cref{main_c} over all $u \in \Ce$ in order to show that the largest eigenvalue $\lambda_1 = \lambda_1(M)$ of the corresponding Gram matrix $M = M_\Ce$ satisfies $p(\lambda_1) \geq 0$ where $p$ is a degree 3 polynomial with a positive leading term and 3 real roots $s_0 \geq s_1 > s_2$. It will follow that $\lambda_1$ is either at most $s_1$ or at least $s_0$ and we further show that $s_1 \leq \frac{1-\alpha^2}{\alpha^2}$ and $s_0 \geq \alpha n - \frac{1 + O(\alpha)}{2\alpha^2}$ when $n$ is sufficiently large. Therefore, just like in the proof of \Cref{thm_bound1_r}, we have two cases: either $\lambda_1$ is small (at most $\frac{1-\alpha^2}{\alpha^2}$) or large (nearly $\alpha n$)

If $\lambda_1$ is small, i.e. $\lambda_1 \leq \frac{1-\alpha^2}{\alpha^2}$, then using a straightforward generalization of \Cref{lem_lambda_small}, we will conclude that  $n \leq \left( \frac{1}{\alpha^2} - 1 \right)^2$. Otherwise, $\lambda_1$ is large, i.e.\ $\lambda_1 \geq \alpha n - \frac{1 + O(\alpha)}{2\alpha^2}$, in which case we will proceed as in the proof of \Cref{thm_bound1_r} by applying \Cref{lem_schnirelman} with $L = M - \lambda_1 x x^*$, where $x$ is a unit eigenvector corresponding to $\lambda_1$, in order to conclude that 
\[
n \leq \frac{1 + O(\alpha)}{\alpha} r. \qedhere
\]
\end{proof}

We first establish the required generalization of \Cref{lem_lambda_small}.

\begin{lemma} \label{lem_lambda_small_c}
Let $0 < \alpha < 1$ and let $\Ce$ be a spherical $S^1(\alpha)$-code. If the largest eigenvalue $\lambda_1$ of the Gram matrix $M = M_\Ce$ satisfies $\lambda_1 \leq \frac{1-\alpha^2}{\alpha^2}$, then
\[
|\Ce| \leq \left( \frac{1}{\alpha^2} - 1 \right)^2
\]
with equality only if the span of $\Ce$ has dimension $1/\alpha^2 - 1$.
\end{lemma}
\begin{proof}
As in the proof of \Cref{lem_lambda_small}, if we let $n = |\Ce|$ and $r = \rk(M)$, then we can observe that \eqref{eq_trace_square} and \eqref{eq_lambda_small} also hold for the Hermitian Gram matrix $M$, so we have
\[
(\alpha^2n + 1 - \alpha^2) n \leq \lambda_1 n  \leq  \frac{1-\alpha^2}{\alpha^2} n ,
\]
from which the desired bound follows. Moreover, it follows from the inequality \eqref{eq_lambda_small} that equality occurs in the above only if $\lambda_i(M) = \frac{1-\alpha^2}{\alpha^2}$ for all $1 \leq i \leq r$. By changing bases, we may assume that $\Ce \subset \C^{r}$, so that if we let $V \in \C^{r \times \Ce}$ be the matrix given by $Vy = \sum_{v \in \Ce}{y(v) v}$, then $M = V^* V$ has the same nonzero eigenvalues as $V V^* = \sum_{v \in \Ce}{v v^*}$. Therefore, $n = \left( 1/\alpha^2 - 1 \right)^2$ implies that $ \sum_{v \in \Ce}{v v^*} = \frac{1-\alpha^2}{\alpha^2} I$ where $I$ is the $r \times r$ identity matrix and by taking the trace, we conclude that $\left( 1/\alpha^2 - 1 \right)^2 = n = \frac{1-\alpha^2}{\alpha^2} r$ and so $r = 1/\alpha^2 - 1$.
\end{proof}

We will also need the following simple upper bound on the largest eigenvalue of the corresponding Gram matrix.

\begin{lemma} \label{lem_lambda1}
Let $0 < \alpha < 1$ and let $\Ce$ be a spherical $\{ \alpha, -\alpha \}$-code in $\C^r$ with corresponding Gram matrix $M = M_{\Ce}$. Then $\lambda_1(M) \leq \alpha |\Ce| + 1 - \alpha$.
\end{lemma}
\begin{proof}
Let $x$ be a unit eigenvector corresponding to $\lambda_1(M)$. Using Cauchy--Schwarz, we obtain
\begin{align*}
\lambda_1(M) = x^\intercal M x 
\leq \sum_{u, v \in \Ce}{ |M(u,v)| |x(u)| |x(v)|}
&= (1 - \alpha)  \sum_{v \in \Ce}{|x(v)|^2} + \alpha \left(  \sum_{v \in \Ce}{|x(v)|} \right)^2\\
&\leq (1 - \alpha)  \sum_{v \in \Ce}{|x(v)|^2} + \alpha |\Ce| \sum_{v \in \Ce}{|x(v)|^2} \\
&= 1 - \alpha + \alpha |\Ce|. \qedhere
\end{align*}
\end{proof}

We now obtain the aforementioned degree 3 polynomial inequality as a function of the largest eigenvalue of the corresponding Gram matrix.

\begin{lemma} \label{lem_eigen_poly_c} 
Let $0 < \alpha < 1$ and let $\Ce$ be a spherical $S^1(\alpha)$-code in $\C^r$ with $n = |\Ce|$ whose Gram matrix $M = M_{\Ce}$ has largest eigenvalue $\lambda_1 = \lambda_1(M)$. Then we have
\[
\lambda_1^3 - \alpha^2 \left( n + \frac{1 - \alpha^2}{\alpha^2} \right)^2 \lambda_1 + (1-\alpha^2) \left(n + \frac{1-\alpha^2}{\alpha^2} \right) n \geq 0,
\]
with equality if $n = r^2$.
\end{lemma}
\begin{proof}
Let $x$ be a unit eigenvector corresponding to $\lambda_1$. For each $u \in \Ce$, \Cref{main_c} yields
\[
\lambda \left(\frac{\lambda^2}{ \alpha^2 n + 1 - \alpha^2} - \frac{1-\alpha^2}{\alpha^2} \right) |x(u)|^2 \geq \lambda - \frac{1-\alpha^2}{\alpha^2}.
\]
Since $\alpha, \lambda_1 > 0$ and $x$ is a unit vector, summing the above over all $u \in \Ce$ and multiplying by $\alpha^2 n + 1 - \alpha^2$ yields the desired inequality. Moreover, if $n = r^2$, then we have equality via \Cref{main_c}.
\end{proof}

In order to bound the largest root of the degree 3 polynomial in the preceding lemma, we need the following computational lemma.
\begin{lemma} \label{lem_largest_root}
Let $p(x) = x^3 - b x + c$ be a polynomial such that $b > 0$, $c \geq 0$, and $ \frac{c}{b^{3/2}} \leq  \frac{2}{3 \sqrt{3}}$. Then the largest root of $p$ is at least $\sqrt{b} - \frac{c}{2b} - \left( \frac{27}{4} - 3 \sqrt{3} \right)\frac{ c^2}{b^{5/2} }$.
\end{lemma}
\begin{proof}
Let $q =  \frac{27}{4} - 3 \sqrt{3}$, $z = \frac{c}{b^{3/2}}$, and $x = 1 - \frac{z}{2} - q z^2$ so that $\sqrt{b} x  =  \sqrt{b} - \frac{c}{2b} - q \frac{c^2}{ b^{5/2} }$. Since the leading term of $p$ is positive, we need to show that $p \left( \sqrt{b} x \right) \leq 0$. To this end, we observe that the polynomial $g(z) =  \frac{3}{4} - 2q + \left(3q - \frac{1}{8} \right) z +  \left(3q^2 - \frac{3q}{4} \right)  z^2 - \frac{3q^2}{2} z^3 - q^3 z^4$ satisfies 
\begin{align*}
\frac{p\left( \sqrt{b} x \right)}{b^{3/2}} 
= x^3 - x + z
= z^2 g(z),
\end{align*}
so we equivalently need to show that $g(z) \leq 0$ for all $0 \leq z \leq \frac{2}{3 \sqrt{3}}$. To verify this fact, we compute\footnote{Although these computations can all be done by hand, we remark that the reader can also convince themselves of their validity by making use of a mathematical software such as Matlab, Mathematica, or WolframAlpha.} that $g\left(\frac{2}{3 \sqrt{3}}\right) = 0$, so it suffices to check that $g'(z) > 0$ for all $0 \leq z \leq \frac{2}{3 \sqrt{3}}$. To see this, we compute that $g'(0) > 0$, $g'(-1) < 0$, $g'\left( \frac{2}{3 \sqrt{3}} \right) > 0$ and note that $g'$ is a degree 3 polynomial with a negative leading term, so that it must have roots $r_0, r_1, r_2$ satisfying $r_2 < -1 < r_1 < 0$ and $r_0 > \frac{2}{3 \sqrt{3}}$. 
\end{proof}

\begin{remark}
Note that the discriminant of $x^3 - bx + c$ is $\frac{c^2}{4} - \frac{b^3}{27}$, which is at most 0 precisely when $\frac{c}{b^{3/2}} \leq \frac{2}{3 \sqrt{3}}$, in which case Vi\`{e}te's formula implies that the largest root of this polynomial is $2 \sqrt{\frac{b}{3}} \cos\left( \frac{1}{3}  \arccos\left( \frac{- 3 \sqrt{3} c}{2 b^{3/2}} \right) \right)$. However, to make use of this expression, we would still need to approximate it by a Taylor polynomial with an explicit bound on the error term, so we found it simpler to derive such an approximation directly. Also, the coefficient $\frac{27}{4} - 3 \sqrt{3}$ is best possible.
\end{remark}

We now obtain the desired bounds on the roots of the polynomial from \Cref{lem_eigen_poly_c}. In particular, it will be convenient to obtain a bound on the square of the largest root, as follows.

\begin{lemma} \label{lem_root_bounds_c}
Let $0 < \alpha < 1$ and $n \in \N$ be such that $n \geq \left( \frac{1}{\alpha^2} - 1 \right)^2$. Then the polynomial $p(x) = x^3 - \alpha^2 \left( n + \frac{1 - \alpha^2}{\alpha^2} \right)^2 x + (1-\alpha^2)\left(n + \frac{1-\alpha^2}{\alpha^2} \right) n$ has 3 real roots $s_0 \geq s_1 \geq s_2$ satisfying $s_1 \leq \frac{1-\alpha^2}{\alpha^2}$ and $s_0^2 > n(\alpha^2 n + 1 - \alpha^2)  - \frac{ (1-\alpha^2)(1-\alpha)}{ \alpha} n -  3 \frac{(1-\alpha^2)^2}{ \alpha^4}$.
\end{lemma}
\begin{proof}
Let $b = \alpha^2 \left( n + \frac{1 - \alpha^2}{\alpha^2} \right)^2$ and $c =  (1-\alpha^2) \left(n + \frac{1-\alpha^2}{\alpha^2} \right) n$, so that $p(x) = x^3 - b x + c$. We would like to apply \Cref{lem_largest_root}, so we need to bound $\frac{c}{b^{3/2}} = \frac{ (1-\alpha^2) n }{\alpha^3 \left( n + \frac{1-\alpha^2}{\alpha^2} \right)^2}$. One can verify by differentiating this expression with respect to $n$, that it is decreasing when $n \geq \frac{1 - \alpha^2}{\alpha^2}$ and we have $n \geq \max\left( \left( \frac{1}{\alpha^2} - 1 \right)^2, 1 \right) \geq \frac{1 - \alpha^2}{\alpha^2}$, so $\frac{c}{b^{3/2}}$ is maximized when $n = \max\left( \left( \frac{1}{\alpha^2} - 1 \right)^2, 1 \right)$. Furthermore, observe that for both $n = \left( \frac{1}{\alpha^2} - 1 \right)^2$ and $n = 1$, we have $\frac{c}{b^{3/2}} = \alpha(1-\alpha^2)$, so that we may conclude 
\[
\frac{c}{b^{3/2}}  \leq \alpha(1-\alpha^2) \leq \frac{2}{3\sqrt{3}},
\] 
where the last inequality holds since $\alpha(1-\alpha^2)$ is maximized when $\alpha = 1/\sqrt{3}$. We may now apply \Cref{lem_largest_root} to conclude that $s_0 \geq \sqrt{b} - \frac{c}{2b} -  \left( \frac{27}{4} - 3 \sqrt{3} \right) \frac{c^2}{b^{5/2} }$ and since $2 \left( \frac{27}{4} - 3 \sqrt{3} \right) - 1/4 < 3$, we obtain the desired bound on the largest root
\begin{align*}
s_0^2 > b - \frac{c}{\sqrt{b}} - 3\frac{ c^2}{ b^2}
&= n(\alpha^2 n + 1 - \alpha^2)  + (1-\alpha^2) n +  \frac{\left(1-\alpha^2\right)^2}{\alpha^2} - \frac{ 1-\alpha^2 }{ \alpha} n - 3\frac{(1-\alpha^2)^2 n^2}{ \alpha^4 \left(n + \frac{1-\alpha^2}{2\alpha^2} \right)^2}\\
&> n(\alpha^2 n + 1 - \alpha^2)  - \frac{ (1-\alpha^2)(1-\alpha)}{ \alpha} n -  3\frac{(1-\alpha^2)^2}{ \alpha^4}.
\end{align*}

Now we compute that $p(0) > 0$ and since $n \geq \left( \frac{1}{\alpha^2} -1 \right)^2$, we also compute that $p\left(\frac{1-\alpha^2}{\alpha^2}\right) \leq 0$. Therefore, as $p$ has a positive leading term, its smallest root must satisfy $s_2 < 0$ and its largest root must satisfy $s_0 \geq \frac{1-\alpha^2}{\alpha^2}$. Hence, either $s_0 = \frac{1-\alpha^2}{\alpha^2}$ or else $p$ has a root in the interval $\left[ 0, \frac{1-\alpha^2}{\alpha^2} \right]$ which is not $s_0$ or $s_2$, so it must be $s_1$. In either case, we conclude the other desired bound $s_1 \leq \frac{1-\alpha^2}{\alpha^2}$.
\end{proof}

\begin{proof}[Proof of \Cref{thm_bound1_c}]
We let $\Ce$ be a spherical $S^1(\alpha)$-code representing a set of $n = N^{\C}_{\alpha}(r)$ equiangular lines in $\C^r$ and let $M = M_{\Ce}$ be the corresponding Gram matrix with largest eigenvalue $\lambda_1 = \lambda_1(M)$. \Cref{lem_eigen_poly_c} implies that $p(\lambda_1) = \lambda_1^3 - \alpha^2 \left( n + \frac{1 - \alpha^2}{\alpha^2} \right)^2 \lambda_1 + (1-\alpha^2)\left(n + \frac{1-\alpha^2}{\alpha^2} \right) n \geq 0$. Just like the proof of \Cref{thm_bound1_r}, we will show that exactly one of the following holds: either $n \leq (1/\alpha^2 - 1)^2$ with equality only if $\Ce$ forms a SIC in $\C^{1/\alpha^2 - 1}$ or $n \leq \frac{1+\alpha}{\alpha} r + 3\frac{1+\alpha}{\alpha^3}$. Thus we may assume without loss of generality that $n \geq \left( \frac{1}{\alpha^2} - 1\right)^2$ and therefore apply \Cref{lem_root_bounds_c} to conclude that $p$ has real roots $s_0 > s_1 > s_2$ satisfying $s_1 \leq \frac{1-\alpha^2}{\alpha^2}$ and $s_0^2 > n(\alpha^2 n + 1 - \alpha^2)  - \frac{ (1-\alpha^2)(1-\alpha)}{ \alpha} n -  3\frac{(1-\alpha^2)^2}{\alpha^4} $. Since $p$ has a positive leading term, $p(\lambda_1) \geq 0$ implies that either $\lambda_1 \leq s_1$ or $\lambda_1 \geq s_0$. If $\lambda_1 \leq s_1 \leq \frac{1-\alpha^2}{\alpha^2} $, then the first desired bound $n \leq \left(1/\alpha^2 - 1 \right)^2$ and a characterization of equality follow from \Cref{lem_lambda_small_c}. 

Otherwise, we have $\lambda_1 \geq s_0$. Now let $x$ be a unit eigenvector corresponding to $\lambda_1$ and let $L = M - \lambda_1 x x^*$. We compute that $\rk(L) = r-1$ and using \Cref{lem_lambda1}, that $\tr(L) = n - \lambda_1 \geq (1-\alpha)(n-1)$. Moreover, using \eqref{eq_trace_square} (which also holds for the Hermitian Gram matrix $M$), we have
\begin{align*}
\tr(L^2) = \lambda_2(M)^2 + \ldots + \lambda_r(M)^2 = n(\alpha^2 n + 1 - \alpha^2) - \lambda_1^2
&\leq n(\alpha^2 n + 1 - \alpha^2) - s_0^2\\
&< \frac{ (1-\alpha^2)(1-\alpha)}{\alpha} n +  3\frac{(1-\alpha^2)^2 }{\alpha^4} 
\end{align*}
Therefore,  \Cref{lem_schnirelman} yields
\[
(1-\alpha)^2 (n-1)^2 
\leq  \tr(L)^2 
\leq (r-1)  \tr(L^2)
< (r-1) \left( \frac{ (1-\alpha^2)(1-\alpha)}{\alpha} n +  3\frac{(1-\alpha^2)^2 }{\alpha^4} \right)
\]
and dividing by $(1-\alpha)^2 n$, we conclude that
\begin{equation*} \label{eq_spectral_bound_c}
n  - 2 < \frac{(n-1)^2}{n} < \frac{1+\alpha}{\alpha}(r-1)  +  3\frac{(1+\alpha)^2 }{\alpha^4} \frac{r}{n} . 
\end{equation*}
The second desired bound $n \leq \frac{1+\alpha}{\alpha} r + 3\frac{1+\alpha}{\alpha^3}$ now follows from the above inequality. Indeed, we may assume that $n \geq \frac{1+\alpha}{\alpha}r$ as otherwise we are done, and so we have
\[
n < \frac{1+\alpha}{\alpha} r - \frac{1 + \alpha}{\alpha} + 2 + 3\frac{(1+\alpha)^2 }{\alpha^4} \frac{r}{n}
\leq \frac{1+\alpha}{\alpha} r +  3\frac{1+\alpha }{\alpha^3}. \qedhere
\]
\end{proof}

\section{Eigenvalues of regular graphs} \label{section_graphs}

In this subsection, we will prove \Cref{regular_bounds} and \Cref{cor_alon_boppana}, which can be seen as generalizations of the Alon--Boppana theorem to dense graphs. Our approach will be to use \eqref{eq_gram_adjacency} in order to convert the given regular graph into a corresponding system of real equiangular lines and then apply the projection method of \Cref{section_projections}. As a consequence, our new bounds will be tight for any strongly regular graph corresponding to $\binom{r+1}{2}$ equiangular lines in $\R^r$.  To this end, we first recall some well-known facts about the second eigenvalue of connected graphs, see e.g.\ \cite{CDS98} p.\ 163.

\begin{lemma} \label{lem_spectral_history}
$\lambda_2(K_n) = -1$ for any complete graph $K_n$, $\lambda_2(K_{n_1, \ldots, n_r}) = 0$ for any complete multipartite graph $K_{n_1, \ldots, n_r}$, and $\lambda_2(H) > 0$ for any other nontrivial connected graph $H$.
\end{lemma}

\begin{remark}
Beyond \Cref{lem_spectral_history}, there is a line of research characterizing graphs $H$ with $\lambda_2(H) \leq c$ or $\lambda_2(H) = c$ for small constant $c$, see \cite{CS95}. There are also important results on characterizing graphs $H$ with the smallest eigenvalue $\lambda_n(H) \geq - c$ for small $c$, such as the classical result for $c = 2$ \cite{CGSS91, GR01} and more recent generalizations \cite{KYY18, JP21}.
\end{remark}

We now derive an improved version of \Cref{main_r} in the special case where $y = \one$ is an eigenvector of $M$. 

\begin{theorem} \label{main_r_regular}
Let $0 < \alpha < 1$ and let $\Ce$ be a spherical $\{ \alpha, -\alpha\}$-code in $\R^r$ with corresponding Gram matrix $M = M_{\Ce}$. If $n = |\Ce|$ and $\one$ is an eigenvector of $M$ with corresponding eigenvalue $\lambda \neq 0$, then
\[
\frac{(1-\alpha^2)n}{2 \lambda} \inprod{x}{Mx} + \left( \frac{\lambda^2}{n} - \frac{1 - \alpha^2}{2} \right) \inprod{x}{\one}^2 \geq \inprod{x}{M^2 x}
\]
for all $x \in \R^{\Ce}$, with equality whenever $n = \binom{r+1}{2} - 1$.
\end{theorem}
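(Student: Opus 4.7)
The plan is to apply \Cref{main_r} to the specific pair of vectors $(Px, \one)$, where $P := I - \frac{1}{n}J$ denotes the orthogonal projection from $\R^n$ onto the hyperplane $\one^\perp$. Combined with the hypothesis $M\one = \lambda\one$, this choice of pair is engineered to annihilate most of the inner products appearing on both sides of the inequality in \Cref{main_r}, collapsing it to a one-variable statement equivalent to the claim.

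Specifically, $M\one = \lambda\one$ together with $Px \perp \one$ gives $\inprod{Px}{M\one} = \lambda\inprod{Px}{\one} = 0$ and $\inprod{MPx}{M\one} = \lambda^2\inprod{Px}{\one} = 0$, while $\inprod{\one}{M\one} = \lambda n$; moreover $f(M\one) = \lambda^2\one$ coordinatewise, so $\inprod{f(MPx)}{f(M\one)} = \lambda^2\|MPx\|^2 = \lambda^2\inprod{Px}{M^2Px}$. Substituting these into \Cref{main_r} and dividing by $\lambda^2$ collapses the inequality to
\[
\frac{(1-\alpha^2)n}{2\lambda}\inprod{Px}{MPx} \geq \inprod{Px}{M^2Px}.
\]
Since $M$ commutes with $P$ (because $\one$ is an eigenvector of $M$), one readily expands $\inprod{Px}{MPx} = \inprod{x}{Mx} - \frac{\lambda\inprod{\one}{x}^2}{n}$ and $\inprod{Px}{M^2Px} = \inprod{x}{M^2x} - \frac{\lambda^2\inprod{\one}{x}^2}{n}$; substituting and rearranging produces exactly the desired inequality.

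For the equality case at $n = \binom{r+1}{2}-1$, recall from the proof of \Cref{main_r} that its inequality is tight precisely when the auxiliary matrix $\hat X := \frac{1}{2}\bigl(V(Px)\,w^\intercal + w\,(V(Px))^\intercal\bigr)$, with $w := V\one$, lies in $\Span\{W_1,\ldots,W_n\} \subseteq \Se_r$. Since $\dim\Se_r = \binom{r+1}{2}$ and $W_1,\ldots,W_n$ are linearly independent, their span has codimension one, and generically (i.e.\ when $I \notin \Span\{W_i\}$) this one-dimensional complement is spanned by $Z := I - \frac{1}{\alpha^2 n + 1-\alpha^2}\,VV^\intercal$, the component of $I$ orthogonal to $\Span\{W_i\}$. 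Using $V^\intercal w = M\one = \lambda\one$ and $\inprod{\one}{Px} = 0$, a direct computation gives $\tr(\hat X) = w^\intercal V(Px) = \lambda\inprod{\one}{Px} = 0$ and $\tr(\hat X\,VV^\intercal) = \lambda\,w^\intercal V(Px) = 0$, so $\inprod{\hat X}{Z}_F = 0$ and equality follows. The main subtlety I anticipate is the exceptional case $I \in \Span\{W_i\}$ (which coincides with the relative bound being saturated), where the one-dimensional complement is spanned by a different direction and the orthogonality of $\hat X$ would have to be verified anew.
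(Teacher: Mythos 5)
Your derivation of the inequality is correct and takes a genuinely different route from the paper. You apply \Cref{main_r} to the pair $(Px,\one)$ with $P = I - \frac{1}{n}J$, so that $\inprod{Px}{M\one}$ and $\inprod{MPx}{M\one}$ vanish and the inequality collapses to $\frac{(1-\alpha^2)n}{2\lambda}\inprod{Px}{MPx} \geq \inprod{Px}{M^2Px}$, which expands to the claim. The paper instead augments the projection $\Pe$ onto $\Span\{W_1,\ldots,W_n\}$ by one extra orthonormal direction $Z$ built from the residual of $X_{\one} = V\one(V\one)^{\intercal}$, and then computes $\|\Pe X_x\|_F^2 + \inprod{Z}{X_x}_F^2$ explicitly. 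The two arguments are in fact equivalent: your coefficient $\inprod{x}{\one}/n$ is exactly the optimal coefficient for projecting $(I-\Pe)X_x$ onto $(I-\Pe)X_{\one}$, so you recover the same inequality, but with far less computation and with no need (for the inequality alone) to verify that the residual of $X_{\one}$ is nonzero.

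The equality case is where your proposal has a genuine, though small and fixable, gap, and you flag it yourself: when $I \in \Span\{W_1,\ldots,W_n\}$ (by \Cref{thm_welch} this is exactly the tight-frame case, where the relative bound is attained), your $Z = I - \frac{1}{\alpha^2 n + 1 - \alpha^2}VV^{\intercal}$ is zero and no longer spans the one-dimensional orthogonal complement, so orthogonality of $\hat X$ to that complement is not established; this coincidence with $n = \binom{r+1}{2}-1$ is not obviously vacuous, so it must be handled. The paper's device closes it: use the residual of $ww^{\intercal}$ with $w = V\one$ (the paper's $X_{\one}$) instead of the residual of $I$. The paper shows this residual is never zero, since otherwise $\lambda^2 = \tr(M^2) = n(\alpha^2 n + 1 - \alpha^2)$ would force $M = \frac{\lambda}{n}\one\one^{\intercal}$, contradicting $M_{i,i}=1$ and $|M_{i,j}| = \alpha < 1$; hence it always spans the complement when $n = \binom{r+1}{2}-1$. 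Your own trace computations then finish the argument: one has $\Pe(ww^{\intercal}) = \frac{\lambda^2}{\alpha^2 n + 1 - \alpha^2}VV^{\intercal}$, while $\inprod{ww^{\intercal}}{\hat X}_F = (w^{\intercal}w)(w^{\intercal}V Px) = \lambda^2 n \inprod{\one}{Px} = 0$ and $\inprod{VV^{\intercal}}{\hat X}_F = 0$, so $\hat X \perp (I-\Pe)(ww^{\intercal})$ and the equality case holds unconditionally.
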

\begin{proof}
Let us suppose we are in the setting of the proof of \Cref{main_r}, so that
$\Ce = \{v_1, \ldots, v_n\}$, the linear map $\W \colon \R^{\Ce} \rightarrow \Se_r$ is defined by $\W e_v = v v^\intercal$ for $v \in \Ce$ and the orthogonal projection $\Pe \colon \Se_r \rightarrow \Se_r$ onto the span of $\{ v v^\intercal : v \in \Ce\}$ is given by $\Pe = \W (\W^{\#} \W)^{-1} \W^{\#}$. 

Now let $x \in \R^\Ce$ and define $X_x = \frac{1}{2} \left( Vx (V\one)^{\intercal} + V\one (Vx)^{\intercal}\right)$. We compute that
\begin{equation} \label{y_frobenius}
||X_x||_F^2 = \frac{1}{2}\left(\inprod{x}{Mx} \inprod{\one}{M \one} + \inprod{x}{M\one}^2 \right)
= \frac{1}{2}\left(\lambda n \inprod{x}{Mx} + \lambda^2 \inprod{x}{\one}^2 \right).
\end{equation}
Moreover, for all $v \in \Ce$, we have $\inprod{v v^\intercal}{V\one(Vx)^{\intercal}}_F 
= \inprod{e_v}{M \one} \inprod{e_v}{Mx}
= \lambda (Mx)(v)$ and similarly $\inprod{v v^\intercal}{Vx(V\one)^{\intercal}}_F = \lambda (Mx)(v)$, so that $\W^\# X_x = \lambda Mx$. As in the proof of \Cref{main_r}, we have $(\W^{\#} \W)^{-1} = \frac{1}{1-\alpha^2}\left(I - \frac{\alpha^2}{\alpha^2 n + 1 - \alpha^2} J \right)$ and so 
\begin{align} \label{py_frobenius}
||\Pe X_x||_F^2 
= X_x^\# \W (\W^\# \W)^{-1} \W^\# X_x
&= \frac{\lambda^2}{1-\alpha^2} (Mx)^\intercal \left( I - \frac{\alpha^2}{\alpha^2 n + 1 - \alpha^2} J \right) M x  \\
&= \frac{\lambda^2}{1-\alpha^2}\left(\left | \left | Mx \right | \right |^2 - \frac{\alpha^2 \lambda^2}{\alpha^2 n + 1 - \alpha^2}  \inprod{x}{ \one}^2 \right).\nonumber
\end{align}
In particular, applying \eqref{y_frobenius} and \eqref{py_frobenius} with $x = \one$ yields $||X_{\one}||_F^2 = \lambda^2 n^2$ and
\[
||\Pe X_{\one}||_F^2 = \frac{1}{1-\alpha^2}\left(\lambda^4 n - \frac{\alpha^2 \lambda^4 n^2}{\alpha^2 n + 1 - \alpha^2} \right)
= \frac{\lambda^4 n }{\alpha^2 n + 1 - \alpha^2}.
\]
We now claim that $X_{\one} - \Pe X_{\one} \neq 0$. Indeed, otherwise we would have 
\[
0 = ||X_{\one} - \Pe X_{\one}||_F^2 = ||X_{\one}||_F^2 - ||\Pe X_{\one}||_F^2 = \lambda^2 n \left(n - \frac{\lambda^2}{\alpha^2 n + 1 - \alpha^2} \right),
\]
which implies $\lambda^2 = n ( \alpha^2 n + 1 - \alpha^2)$. However, we also have via \eqref{eq_trace_square} that $\sum_{i=1}^n{\lambda_i(M)^2} = n (\alpha^2 n + 1 - \alpha^2)$, so that all eigenvalues of $M$ except $\lambda$ must be $0$ and thus $M = \frac{\lambda}{n} \one \one^{\intercal}$, which contradicts the fact that $M(v,v) = 1$ and $|M(u,v)| = \alpha < 1$ for any $u \neq v \in \Ce$. 

Therefore, we may define 
\[
Z = \frac{X_{\one} - \Pe X_{\one}}{\sqrt{||X_{\one}||_F^2 - ||\Pe X_{\one}||_F^2 }} = \frac{X_{\one} - \Pe X_{\one}}{\lambda \sqrt{n^2 - \frac{\lambda^2 n}{\alpha^2n + 1 - \alpha^2} }}
\]
so that by definition, $Z$ is orthogonal to $v v^\intercal$ for all $v \in \Ce$. It follows that $\Pe' = \Pe + Z Z^{\#}$ is the orthogonal projection onto the span of $\{v v^\intercal : v \in \Ce\} \cup Z$ and moreover, 
\begin{equation} \label{eq_projection_graph}
||X_x||_F^2 \geq ||\Pe' X_x||_F^2 = ||\Pe X_x||_F^2 + \inprod{Z}{X_x}_F^2.
\end{equation}
In view of \eqref{y_frobenius} and \eqref{py_frobenius}, it remains to compute $\inprod{Z}{X_x}_F$. To this end, observe that
\[
\inprod{X_{\one}}{Vx(V\one)^{\intercal}}_F = \inprod{V \one}{Vx} \inprod{V \one}{V \one} 
= \inprod{M \one}{x} \inprod{\one}{M \one}
= \lambda^2 n \inprod{x}{\one}
\] 
and similarly $\inprod{X_{\one}}{V\one(Vx)^{\intercal}}_F = \lambda^2 n \inprod{x}{\one}$, so that $\inprod{X_{\one}}{X_x}_F = \lambda^2 n \inprod{x}{\one}$. Since $\W^\# X_{\one} = \lambda M\one = \lambda^2 \one$, we conclude that 
\begin{align*}
\inprod{\Pe X_{\one}}{X_x} 
= X_{\one}^{\#} \W (\W^{\#} \W)^{-1} \W^{\#} X_x
&= \frac{\lambda^3}{1 - \alpha^2} \one^{\intercal} \left(I - \frac{\alpha^2}{\alpha^2 n + 1 - \alpha^2} J \right) Mx\\
&= \frac{\lambda^3}{1 - \alpha^2} \left(1 - \frac{\alpha^2 n}{\alpha^2 n + 1 - \alpha^2} \right) \one^{\intercal} Mx \\
&= \frac{\lambda^4}{\alpha^2 n + 1 - \alpha^2} \inprod{x}{\one}
\end{align*}
and so we have
\begin{align*}
\inprod{Z}{X_x}_F 
= \frac{\inprod{X_{\one}}{X_x}_F - \inprod{\Pe X_{\one}}{X_x}_F}{\lambda \sqrt{n^2 - \frac{\lambda^2 n}{\alpha^2n + 1 - \alpha^2} }}
&= \frac{\lambda^2 n \inprod{x}{\one} - \frac{\lambda^4}{\alpha^2 n + 1 - \alpha^2} \inprod{x}{\one}}{\lambda \sqrt{n^2 - \frac{\lambda^2 n}{\alpha^2n + 1 - \alpha^2} }}\\
&= \frac{\lambda}{n} \sqrt{n^2 - \frac{\lambda^2 n }{\alpha^2 n + 1 - \alpha^2}} \inprod{x}{\one}.
\end{align*}
Now that we have determined $\inprod{Z}{X_x}_F$, we conclude via \eqref{y_frobenius}, \eqref{py_frobenius}, and \eqref{eq_projection_graph} that
\[
\frac{ \lambda n}{2} \inprod{x}{Mx} \geq \frac{\lambda^2}{1-\alpha^2}\inprod{x}{M^2 x} - \left( \frac{\lambda^4}{(1-\alpha^2) n} - \frac{\lambda^2}{2} \right) \inprod{x}{\one}^2,
\]
which is equivalent to the desired bound. 

Moreover, note that $\Se_r$ has dimension $\binom{r+1}{2}$ so that if $n = \binom{r+1}{2} - 1$, then since $\rk(\W) = n$ and $Z$ is orthogonal to $v v^\intercal$ for all $v \in \Ce$, we have that $\{v v^\intercal : v \in \Ce\} \cup Z$ span $\Se_r$ and thus $\Pe'$ is the identity map, giving equality above.
\end{proof}

\begin{remark}
The inequality of \Cref{main_r_regular} is equivalent to the matrix $\frac{1-\alpha^2}{2\lambda} M + \left( \frac{\lambda^2}{n} - \frac{1 - \alpha^2}{2} \right) J - M^2$ being positive semidefinite.
\end{remark}


\begin{lemma} \label{lem_regular_bounds_equi}
Let $0 < \alpha < 1$ and let $\Ce$ be a spherical $\{ \alpha, -\alpha\}$-code in $\R^r$ with corresponding Gram matrix $M = M_{\Ce}$. If $n = |\Ce|$ and $\one$ is an eigenvector of $M$ with corresponding eigenvalue $\lambda \neq 0$, then
\[
\lambda^2 \geq n(\alpha^2 n + 1 - \alpha^2) - \frac{1-\alpha^2}{2} n \left( \frac{n}{\lambda} - 1 \right),
\]
and for any eigenvalue $\mu$ of $M$ with an eigenvector orthogonal to $\one$, we have
\[
\mu \leq \frac{(1-\alpha^2)n}{2 \lambda},
\]
with equality in both whenever $n = \binom{r+1}{2} - 1$.
\end{lemma}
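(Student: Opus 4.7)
The plan is to deduce both inequalities directly from Theorem~\ref{main_r_regular} by carefully choosing the test vector $x$. Since $M$ is a Gram matrix, it is positive semidefinite, and the hypothesis $\lambda \neq 0$ forces $\lambda > 0$, which will matter when manipulating the resulting inequalities.

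For the first inequality, I would plug in $x = e_i$ for any standard basis vector. The relevant quantities are straightforward to compute: $\inprod{x}{Mx} = M_{i,i} = 1$, $\inprod{x}{\one}^2 = 1$, and crucially $\inprod{x}{M^2 x} = (M^2)_{i,i} = \sum_j M_{i,j}^2 = 1 + (n-1)\alpha^2 = \alpha^2 n + 1 - \alpha^2$, using that every off-diagonal entry of $M$ has magnitude exactly $\alpha$. Substituting into Theorem~\ref{main_r_regular} gives
\[
\frac{(1-\alpha^2)n}{2\lambda} + \frac{\lambda^2}{n} - \frac{1-\alpha^2}{2} \geq \alpha^2 n + 1 - \alpha^2,
\]
and multiplying through by $n$ and rearranging yields exactly the claimed lower bound on $\lambda^2$.

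For the second inequality, I would take $x$ to be a unit eigenvector of $M$ with eigenvalue $\mu$ that is orthogonal to $\one$. Then $\inprod{x}{Mx} = \mu$, $\inprod{x}{M^2 x} = \mu^2$, and $\inprod{x}{\one} = 0$, so Theorem~\ref{main_r_regular} collapses to
\[
\frac{(1-\alpha^2)n}{2\lambda}\,\mu \geq \mu^2.
\]
If $\mu > 0$, dividing by $\mu$ gives the desired bound; if $\mu \leq 0$, the claim $\mu \leq \frac{(1-\alpha^2)n}{2\lambda}$ is automatic since the right-hand side is positive. The equality statement in both parts follows immediately from the fact that Theorem~\ref{main_r_regular} is an equality whenever $n = \binom{r+1}{2}-1$, which propagates through both substitutions.

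I do not expect any serious obstacle here; the real work was already carried out in the proof of Theorem~\ref{main_r_regular}. The only subtlety is the sign of $\mu$, which is easily handled by a case split, and the observation that $\lambda > 0$ (needed to divide safely), which follows from the positive semidefiniteness of the Gram matrix.
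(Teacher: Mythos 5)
Your proposal is correct and follows essentially the same route as the paper: apply Theorem~\ref{main_r_regular} with $x$ a standard basis vector to get the bound on $\lambda^2$, and with $x$ a unit eigenvector orthogonal to $\one$ (dividing by $\mu$ when $\mu \neq 0$, handling the degenerate case separately) to get the bound on $\mu$, with the equality claims inherited from the equality case of Theorem~\ref{main_r_regular}. Your extra remarks on $\lambda > 0$ and the sign of $\mu$ are fine but do not change the argument.
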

\begin{proof}
Fix any $v \in \Ce$ and observe that $\inprod{e_v}{M^2 e_v} = \alpha^2 n + 1 - \alpha^2$. Thus applying \Cref{main_r_regular} with $x = e_v$, we obtain
\[
\frac{1-\alpha^2}{2} \left( \frac{n}{\lambda} - 1 \right) + \frac{\lambda^2}{n} = \frac{(1-\alpha^2)n}{2 \lambda} + \frac{\lambda^2}{n} - \frac{1 - \alpha^2}{2} \geq \alpha^2 n + 1 - \alpha^2,
\]
which is equivalent to the first bound. 

For the second bound, if $\mu = 0$ then it holds trivially. Otherwise, we let $x$ be a unit eigenvector for $\mu$. Since $\inprod{x}{\one} = 0$, we have via \Cref{main_r_regular} that $\frac{(1-\alpha^2)n}{2 \lambda} \mu \geq \mu^2$, so dividing by $\mu$ gives the desired bound. Moreover, $n = \binom{r+1}{2} - 1$ implies we have equality in \Cref{main_r_regular} and therefore also in both of the above bounds.
\end{proof}

Using \Cref{lem_regular_bounds_equi}, we are now able to obtain the desired bounds on the eigenvalues of regular graphs and prove \Cref{regular_bounds}.

\begin{proof}[Proof of \Cref{regular_bounds}]
First note that if $k = 0$ then both inequalities hold trivially. Assume now that $k \geq 1$. We claim that $\lambda_2 > 0$ follows from \Cref{lem_spectral_history}. Indeed, otherwise $G$ would have to either be a complete graph, a complete multipartite graph, or disconnected. Moreover, $G$ cannot be the complete graph or a regular complete multipartite graph, since they all have spectral gap at least $n/2$. Otherwise, $G$ isn't connected, in which case the Perron--Frobenius theorem implies $\lambda_2 = k \geq 1$.

Now let $A = A(G)$ be the adjacency matrix of $G$ and for each $i \in [n]$, let $\lambda_i = \lambda_i(G)$ be the $i$th largest eigenvalue of $G$. By the spectral theorem for symmetric matrices, we have $A = \sum_{i = 1}^{n}{\lambda_i u_i u_i^{\intercal}}$ for an orthonormal basis of eigenvectors $u_1, \ldots, u_n \in \R^n$. As previously mentioned, the Perron--Frobenius theorem implies $\lambda_1 = k$ and $u_1$ can be taken to be a scalar multiple of $\one$. Since $||\one|| = \sqrt{n}$ we may take $u_1 = \frac{1}{\sqrt{n}} \one$. Since $J = \one \one^{\intercal}$, we therefore have
\[ A = \frac{k}{n} J + \sum_{i = 2}^{n}{\lambda_i u_i u_i^{\intercal}}. \]
Let $U = (u_1, \ldots, u_n)$ be the $n \times n$ matrix with $u_1, \ldots, u_n$ as columns. Since $u_1, \ldots, u_n$ form an orthonormal basis, $U$ is an orthogonal matrix and so we have
\[ 
I = U U^{\intercal} = \sum_{i=1}^{n}{u_i u_i^{\intercal}} = \frac{1}{n} J + \sum_{i=2}^{n}{u_i u_i^{\intercal}}.
\]
We now define $\alpha = \frac{1}{2\lambda_2 + 1}$ and note that $0 < \alpha < 1$ since $\lambda_2 > 0$. In view of \eqref{eq_gram_adjacency}, we define $M = \alpha J - 2 \alpha A + (1-\alpha)I$ and observe that $M(v,v) = 1$ and $| M(u,v) | = \alpha$ for all $u \neq v  \in \Ce$. Moreover, we have that
\begin{align*}
M 
= \frac{1}{2 \lambda_2 + 1} \left( J - 2A + 2\lambda_2 I \right)
= \frac{1}{2 \lambda_2 + 1} \left( \left(1 - \frac{2(k - \lambda_2)}{n} \right)J + 2\sum_{i=2}^n{(\lambda_2 - \lambda_i)} u_i u_i^{\intercal} \right),
\end{align*}
which implies that $M$ is positive semidefinite and has $\one$ as an eigenvector with corresponding eigenvalue $\lambda = \frac{n - 2(k-\lambda_2)}{2 \lambda_2 + 1} > 0$. It follows that the dimension of the null space of $M$ is precisely the multiplicity of $\lambda_2$ in $G$, $m_G(\lambda_2)$, so that $r = \rk(M) = n - m_G(\lambda_2)$. Therefore, we can associate a vector in $\R^r$ to each vertex of $G$, so that the resulting collection $\Ce$ has Gram matrix $M_\Ce = M$. It follows that $\Ce$ is a spherical $\{\alpha, -\alpha\}$-code and so we may apply \Cref{lem_regular_bounds_equi} to obtain
\begin{align*}
\lambda^2
&\geq n(\alpha^2 n + 1 - \alpha^2) - \frac{(1 - \alpha^2)n^2}{2 \lambda} + \frac{(1 - \alpha^2)n }{2}\\
&= \frac{n(n + (2 \lambda_2 + 1)^2 - 1)}{(2 \lambda_2 + 1)^2} - \frac{((2 \lambda_2 + 1)^2 - 1)n^2}{2 (2 \lambda_2 + 1) (n - 2(k-\lambda_2))} + \frac{(2\lambda_2 + 1)^2 - 1) n}{2(2 \lambda_2 + 1)^2}.
\end{align*}
Multiplying by $\frac{(2\lambda_2 + 1)^2}{n}$, we therefore have
\begin{align*}
n - 4k + 4\lambda_2 + \frac{4(k-\lambda_2)^2}{n}
= \frac{(2\lambda_2 + 1)^2}{n} \lambda^2
\geq n - \frac{2 \lambda_2(2 \lambda_2 + 1)( \lambda_2 + 1)}{ 1 - \frac{2(k-\lambda_2)}{n}} + 6 \lambda_2^2 + 6\lambda_2,
\end{align*}
which implies the first desired bound
\[
2k - \frac{2(k-\lambda_2)^2}{n} \leq  \frac{ \lambda_2(2 \lambda_2 + 1)( \lambda_2 + 1)}{ 1 - \frac{2(k-\lambda_2)}{n}} - \lambda_2 (3 \lambda_2 + 1).
\]
Moreover, it follows from the spectral decomposition of $M$ that it also has $u_n$ as an eigenvector with corresponding eigenvalue $\mu = \frac{2(\lambda_2 - \lambda_n)}{2 \lambda_2 + 1}$ and so we may apply  \Cref{lem_regular_bounds_equi} to obtain
\[
\frac{2(\lambda_2 - \lambda_n)}{2 \lambda_2 + 1}
= \mu 
\leq \frac{(1 - \alpha^2)n}{2 \lambda}
= \frac{((2\lambda_2 + 1)^2 - 1)n}{2 (2\lambda_2 + 1) (n - 2(k-\lambda_2))}.
\]
Multiplying by $\frac{(2 \lambda_2 + 1)}{2}$ and subtracting $\lambda_2$ we conclude the second desired bound
\[
-\lambda_n \leq \frac{ \lambda_2 (\lambda_2 + 1)}{1 - \frac{2(k-\lambda_2)}{n}} - \lambda_2.
\]
Finally, $n = \binom{n - m(\lambda_2)+1}{2} - 1 = \binom{r+1}{2} - 1$ implies we have equality in both bounds of \Cref{lem_regular_bounds_equi} and therefore also in both of the above.
\end{proof}

\begin{proof}[Proof of \Cref{cor_alon_boppana}]
We first suppose that there exists a constant $\varepsilon > 0$ such that $k - \lambda_2 \leq (1 - \varepsilon)\frac{n}{2}$, or equivalently $1 - \frac{2(k - \lambda_2)}{n} \geq \varepsilon$. Via \Cref{regular_bounds} and \Cref{rem_regular} we have
\[
k < 2 \left( k - \frac{(k-\lambda_2)^2}{n } \right) 
\leq  \frac{ \lambda_2(2 \lambda_2 + 1)( \lambda_2 + 1)}{ \varepsilon} - \lambda_2 (3 \lambda_2 + 1) 
\leq O(\lambda_2^3),
\]
and therefore $\lambda_2 \geq \Omega \left( k^{1/3} \right)$. Also via \Cref{regular_bounds}, we have $-\lambda_n \leq \frac{ \lambda_2 (\lambda_2 + 1)}{\varepsilon} - \lambda_2 \leq O(\lambda_2^2)$, so that $\lambda_2 \geq \Omega\left(\sqrt{-\lambda_n}\right)$. Moreover, if $G$ is bipartite, then $\lambda_n = -k$ and so $\lambda_2 \geq \Omega \left( \sqrt{k} \right)$.

Now let us further suppose that $k - \lambda_2 = o(n)$. In this case, \Cref{regular_bounds} implies
\begin{align*}
(1-o(1))2k + o(1) \lambda_2
= 2 \left( k - \frac{(k-\lambda_2)^2}{n } \right)
\leq  \frac{ \lambda_2(2 \lambda_2 + 1)( \lambda_2 + 1)}{ 1 - o(1)} - \lambda_2 (3 \lambda_2 + 1)
&= (1 + o(1)) 2 \lambda_2^3,
\end{align*}
so that $\lambda_2 \geq (1 - o(1)) k^{1/3}$. Again using \Cref{regular_bounds} we have $-\lambda_n \leq \frac{ \lambda_2 (\lambda_2 + 1)}{1 - o(1)} - \lambda_2 \leq (1 + o(1) ) \lambda_2^2$, so that $\lambda_2 \geq (1- o(1)) \sqrt{-\lambda_n}$. Finally, if $G$ is bipartite, we have $\lambda_n = -k$ and so $\lambda_2 \geq (1-o(1)) \sqrt{k}$.
\end{proof}

\section{Concluding remarks} \label{concluding_remarks}

In this section, we make some concluding remarks and suggest some directions for future research. Firstly, we provide a table that summarizes the consequences of our new upper bounds \Cref{thm_bound1_r},  \Cref{thm_bound3_r}, and \Cref{thm_bound4_r}, together with the relative bound. In the following, $k = k\left(\frac{1-\alpha}{2\alpha}\right)$ is the corresponding spectral radius, $C$ is a positive constant, and all of the asymptotic bounds are assuming $r \rightarrow \infty$ and $\alpha \rightarrow 0$.
\[
N^\R_{\alpha}(r) \leq 
\begin{cases}
	\frac{1 - \alpha^2}{1 - \alpha^2 r} r & \text{if } r \leq  \frac{1}{\alpha^2} - 2 \\[4pt]
	\binom{1/\alpha^2 -1}{2} & \text{if } \frac{1}{\alpha^2} - 2 < r \leq \frac{1 - o(1)}{4\alpha^4}\\[4pt]
	2r - \frac{(1+\alpha)^2}{8 \alpha^2}  & \text{if }  \frac{1-o(1)}{4\alpha^4} < r \leq O\left(  \frac{1}{\alpha^{5}} \right)\\[4pt]
    	\left(1 + \frac{1 + o(1)}{4 \cos^2 \left(\frac{\pi}{q+2}\right)} \right) r  & \text{if }  \frac{1}{\alpha^{2q+1}} \ll r \leq O\left(  \frac{1}{\alpha^{2q+3}} \right) \text{ for integer } q \geq 2\\[4pt]
    	\left( \frac{5}{4}  + o(1) \right) r & \text{if } 1/\alpha^{\omega(1)} \leq r < 2^{1/\alpha^{4C}}\\[4pt]
    	\left(1 + \frac{C \log(1/\alpha)}{\log{\log{r}}} \right) r & \text{if } 2^{1/\alpha^{4C}} \leq r < 2^{1/\alpha^{C (k-1)}}\\[4pt]
    \left \lfloor \frac{k}{k-1}(r-1) \right \rfloor & \text{if } 2^{1/\alpha^{C (k-1)}} \leq r.
\end{cases}
\]
Any significant improvements to our upper bounds or novel lower bound constructions would be interesting. In the following, we present several promising directions for further research.

\begin{enumerate}
 
\item Regarding real equiangular lines, one of the most interesting questions is to determine how far the approach of Jiang, Tidor, Yao, Zhang, and Zhao \cite{JTYZZ21} can be extended. Their method relied on two ingredients: A bound on the maximum degree $\Delta$ of a corresponding graph $G$ which only depends on $\alpha$, and a sublinear bound on the multiplicity $m(\lambda_2)$ of the second eigenvalue $\lambda_2$ of any connected graph with maximum degree at most $\Delta$. Since we now have strong bounds on $\Delta$, the only limitation to extending their result is that their bound on second eigenvalue multiplicity $m(\lambda_2)$ for a connected graph with maximum degree $\Delta$ is only $O\left(n \log{\Delta} / \log{\log{n}} \right)$. Note that for normalized adjacency matrices and therefore also for regular graphs, McKenzie, Rasmussen, and Srivastava \cite{MRS21} improve on the approach of \cite{JTYZZ21} to improve the upper bound to $O_{\Delta}(n/\log^{c}{n})$ for some constant $c$. However, they give reason to suggest that their methods cannot go beyond $O_{\Delta}\left(n / \log{n} \right)$. Moreover, Haiman, Schildkraut, Zhang, and Zhao \cite{HSZZ22} give a construction of a graph with max degree $\Delta = 4$ and $m(\lambda_2) \geq \sqrt{n / \log{n}}$, and they also point out that for bounded degree ($\Delta = O(1)$) graphs, $\sqrt{n}$ is a natural barrier for group representation based constructions such as theirs. 
    
 \item Regarding the multiplicity of the second eigenvalue of a graph, we also point out that a related question was considered by Colin de Verdi\`{e}re \cite{C87}. He conjectured that the maximum of the multiplicity of the second smallest eigenvalue of a generalized Laplacian operator, over all such operators on an orientable surfaces $S$, is precisely $\chr(S) - 1$ where $\chr(S)$ is the chromatic number of the surface. Moreover, via Heawood's formula \cite{RY68} it is known that $\chr(S) = \left \lfloor \left (7 + \sqrt{48\text{g}(S) + 1} \right)/2 \right \rfloor$ where $\text{g}(S)$ is the genus of $S$. In view of this, Tlusty \cite{T207} formulates an analogous statement in the setting of graphs: Given a graph $G$, the maximum multiplicity of the second eigenvalue of a weighted\footnote{For a graph $G$ with vertex set $V(G) = [n]$, a symmetric matrix $L \in \R^{n \times n}$ is called a weighted Laplacian of $G$ if $L \one = 0$ and for all $i \neq j$, $L_{i,j} = 0$ when $ij \notin E(G)$ and $L_{i,j} < 0$ when $ij \in E(G)$.} Laplacian matrix on $G$ is $\left \lfloor \left (7 + \sqrt{48\text{g}(G) + 1} \right)/2 \right \rfloor - 1$, where $g(G)$ is the minimal genus of a surface in which $G$ embeds. Moreover, he proves this claim for a class of graphs including paths, cycles, complete graphs, and their Cartesian products. It is not hard\footnote{Embed the vertices $V(G)$ arbitrarily on the sphere and for each edge $uv \in E(G)$, add a handle near $u$ and $v$ and pass the edge along this handle.} to see that $g(G) \leq |E(G)|$ and thus if Tlusty's statement were to be true for say, the unweighted Laplacian $k I - A$ of a $k$-regular graph $G$ on $n$ vertices with adjacency matrix $A$, then we would be able to conclude that $m_{G}(\lambda_2) \leq O \left( \sqrt{|E(G)|} \right) = O\left(\sqrt{k n} \right)$. Also note that for graphs whose corresponding matrix satisfies a certain technical condition known as the strong Arnold hypothesis, Theorem 5 of Pendavingh \cite{P98} implies the asymptotic version of this result $m_{G}(\lambda_2) \leq O(\sqrt{|E(G)|})$. In particular, for bounded degree graphs this give a bound of $O(\sqrt{n})$, matching the natural barrier discussed previously.
 
\item As mentioned in \Cref{rem_direct_lambda}, it would be interesting to see if some of the results in this paper can be reinterpreted in terms of the Lasserre hierarchy, in order to verify a conjecture of de Laat, Keizer, and Machado \cite{LKM22}.
    
\item From the point of view of applications, one of the most important questions regarding complex equiangular is Zauner's conjecture, i.e.\ that there exist $r^2$ equiangular lines (a SIC) in $\C^r$ for all $r$. An interesting consequence of \Cref{thm_bound1_c} is that if one constructs $r^2$ equiangular lines in $\C^{(1-o(1))(r+1)^{3/2}}$ with $\alpha = 1/\sqrt{r+1}$, then one immediately obtains a SIC in $\C^r$, thereby providing a potential alternative approach to proving Zauner's conjecture.

\item We note that the construction of $r^2 - r + 1$ equiangular lines in $\C^r$ due to Godsil and Roy \cite{GR09} is \emph{flat}, i.e.\ the coordinates of the unit vectors which span the lines all have the same magnitude. They also showed that their construction is best possible, i.e.\ there are at most $r^2 - r + 1$ flat equiangular lines in $\C^r$. It would, therefore, be interesting to see if our methods can be used to obtain an exact version of \Cref{cor_godsil_roy} for flat equiangular lines.
    
\item Another natural question is whether the graph-based methods used in the real case can be extended to complex equiangular lines. We remark that starting with the Gram matrix $M$ of a spherical $S^1(\alpha)$-code $\Ce$ in $\C^r$, one may use \eqref{eq_gram_adjacency} to define a matrix $A$ which is like a complex generalization of an adjacency matrix, and moreover that our approach can be generalized to give an upper bound on $\max_{v \in \Ce}{\text{Re}\left(e_v^{\intercal} A {\one}\right)}$, a quantity analogous to the maximum degree. However, we do not know of an appropriate Alon--Boppana-type theorem in this setting.
    
\item Since we have a way of generalizing the maximum degree bound to complex equiangular lines, it would be interesting to determine if the results of Jiang, Tidor, Yao, Zhang, and Zhao \cite{JTYZZ21} can be extended in order to exactly determine $N^{\C}_{\alpha}(r)$ when $r$ is large relative to $1/\alpha$. We note that this may require complex versions of the bound on the multiplicity of the second largest eigenvalue, as well as a complex version of the Perron--Frobenius theorem.
    
\item We observe that the projection method used in this paper can be applied to other matrices associated with a graph, such as the unweighted Laplacian, in order to obtain new spectral inequalities. Indeed, the method applies for any positive semidefinite matrix and any matrix can be made as such by adding a sufficient multiple of the identity.
    
\item It would be interesting to obtain generalizations of our results to arbitrary spherical $L$-codes in $\R^r$ with $|L| = s$, i.e.\ to $s$-distance sets for $s \geq 2$, as well as for $L = [-1, \alpha]$, i.e.\ the classical coding theory question which is equivalent to packing spherical caps on a sphere in $\R^r$. In particular, as a first step towards understanding $[-1, \alpha]$-codes, it would be interesting to see if our methods can be applied for $[-\alpha, \alpha]$-codes.
    
\item We expect that our approach should extend beyond lines to higher-order equiangular subspaces with respect to different notions of angle, which are described in \cite{BS19}. In particular, we predict that our methods can be generalized to equiangular subspaces with respect to the chordal distance.
    
\item The first Welch bound is the first in a family of higher-order Welch bounds. By generalizing our projection method to higher-order tensors, we expect that it is possible to obtain an improvement to all of the Welch bounds in the same way as we have done for the first.
    
\item Note that spherical $\{\alpha, -\alpha\}$-codes correspond to signed complete graphs, and so it would be interesting to generalize and apply our methods to signed graphs and more generally, to unitary signings of graphs. In particular, Koolen, Cao, and Yang \cite{KCY21} have recently used a Ramsey-theoretic approach analogous to \cite{BDKS18} in order to study signed graphs, so it is plausible that our methods can be applied to extend their result.
    
\end{enumerate}

\vspace{0.4cm}
\noindent
{\bf Acknowledgments.}\,
We thank Noga Alon and Michael Krivelevich for stimulating discussions, as well as Dustin Mixon for information regarding the novelty of one of our bounds. We also thank Boris Bukh, Yufei Zhao, and Gaston Burrull for helpful feedback on earlier versions of this paper. Finally, we thank the anonymous referees for their careful reading and insightful suggestions, which have helped improve this paper's exposition.

\end{document}